\newtheorem{prop}{Proposition}[section]
\newtheorem*{thmA}{Theorem A}
\newtheorem*{thmB}{Theorem B}
\newtheorem*{thmC}{Theorem C}
\newtheorem*{thmD}{Theorem D}
\newtheorem*{thmE}{Theorem E}
\newtheorem*{thmF}{Proposition A}
\newtheorem{thm}[prop]{Theorem}
\newtheorem{lem}[prop]{Lemma}
\newtheorem{cor}[prop]{Corollary}
\newtheorem{defn}[prop]{Definition}
\newtheorem{assu}[prop]{Assumption}
\theoremstyle{definition}
\newtheorem*{ack}{Acknowledgments}
\theoremstyle{remark}
\newtheorem{rem}[prop]{Remark}
\numberwithin{equation}{section}
\begin{document}
\begin{sloppypar}

\title[Minkowski type inequality]{A Minkowski type inequality in warped cylinders}
\author[S. Pan]{Shujing Pan}
\address{School of Mathematical Sciences, University of Science and Technology of China, Hefei 230026, P.R. China}
\email{\href{mailto:psj@ustc.edu.cn}{psj@ustc.edu.cn}}
\author[B. Yang]{Bo Yang}
\address{Department of Mathematical Sciences, Tsinghua University, Beijing 100084, P.R. China}
\email{\href{mailto:ybo@tsinghua.edu.cn}{ybo@tsinghua.edu.cn}}

\subjclass[2020]{52A40; 53C21; 53C42}
\keywords{Minkowski type inequality, warped cylinders, weakly mean convex, inverse mean curvature flow, weak solution}

\begin{abstract}
We prove a Minkowski type inequality for weakly mean convex and star-shaped hypersurfaces in warped cylinders which are asymptotically flat or hyperbolic. In particular, we show that this sharp inequality holds for outward minimizing hypersurfaces in the Schwarzschild manifold or the hyperbolic space using the weak solution of the inverse mean curvature flow. 
\end{abstract}

\maketitle

\tableofcontents

\section{Introduction}
The classical Minkowski inequality for a closed, convex hypersurface $\Sigma$ in $\mathbb{R}^{n+1}$ states as
\begin{equation}\label{MinTy}
	\int_{\Sigma}H\,d\mu\geq n\omega_n^{\frac{1}{n}}|\Sigma|^{\frac{n-1}{n}},
\end{equation}
where $H$ is the mean curvature of $\Sigma$, $\omega_n$ is the area of $n$-dimensional Euclidean unit sphere, and $|\Sigma|$ is the area of $\Sigma$. The equality holds in \eqref{MinTy} if and only if $\Sigma$ is a round sphere. Guan and Li \cite{G-L2009} showed that the inequality \eqref{MinTy} holds for weakly mean convex and star-shaped hypersurfaces using the inverse mean curvature flow in $\mathbb{R}^{n+1}$ introduced in \cites{Ger1990,Urbas1991}. In this paper, we say a hypersurface $\Sigma$ is weakly mean convex if the mean curvature of $\Sigma$ satisfies $H\geq 0$, and strictly mean convex if $H>0$. The Minkowski inequality \eqref{MinTy} has been generalized to the $h$-convex hypersurfaces in the hyperbolic space $\mathbb{H}^{n+1}$ by Ge-Wang-Wu \cite{GWW14} and convex hypersurfaces in the sphere $\mathbb{S}^{n+1}$ by Makowski-Scheuer \cite{MS-16}. Brendle, Guan and Li \cite{BGL} proved the following sharp inequality for strictly mean convex and star-shaped hypersurfaces in $\mathbb{H}^{n+1}$ using the inverse mean curvature flow:
\begin{align}\label{MinTy-2}
    \int_{\Sigma}{H}\,d\mu-n\mathrm{Vol}(\Omega)\geq\psi(|\Sigma|),
\end{align}
where $\psi$ is the  unique monotonically increasing function that gives equality on geodesic spheres, and $\Omega$ is the domain bounded by $\Sigma$. The quantity on the left side of the inequality \eqref{MinTy-2} is also known as a quermassintegral of $\Omega$. 

In this paper, we extend the Minkowski type inequality to the case of hypersurfaces in more general rotationally symmetric spaces.  We consider a warped cylinder $M^{n+1}=[a,+\infty)\times\mathbb{S}^n$ endowed with the metric
\begin{align}\label{metric-1}
 \bar{g}=dr^2+\varphi^2(r)g_{\mathbb{S}^n},
\end{align}
where $a\geq 0$, $\varphi\in C^{\infty}([a,+\infty),\mathbb{R}_+)$ is the warped function and $g_{\mathbb{S}^n}$ is the standard metric of the sphere $\mathbb{S}^n$. In particular, if we assume that $\varphi\in C^{\infty}([0,+\infty))$ satisfying
\begin{equation}\label{metric-D}
  \varphi'(r)=\sqrt{1-m{\varphi}^{1-n}+\kappa {\varphi}^2},\quad \varphi(0)=s_0,\quad \text{and} \quad \varphi'(0)=0,
\end{equation}
where $m>0$ and $\kappa\geq 0$ are two fixed real numbers, $s_0$ is the unique positive solution of the equation $1-m{s_0}^{1-n}+\kappa s_0^2=0$. Then, $M$ is known as the deSitter-Schwarzschild manifold. If $\kappa=0$, $M$ is the ordinary Schwarzschild manifold, while $\kappa=1$ corresponds to the anti-de Sitter-Schwarzschild (AdS-Schwarzschild) manifold.

In \cite{BHW16},  Brendle, Hung and Wang proved the following Minkowski type inequality for strictly mean convex and star-shaped hypersurfaces in the AdS-Schwarzschild manifold:
\begin{thmA}[\cite{BHW16}]\label{ThmA}
	Let $\Sigma$ be a compact, strictly mean convex and star-shaped hypersurface in the AdS-Schwarzschild manifold $M^{n+1}$, and  $\Omega$ be the region bounded by $\Sigma$ and the horizon $\partial M=\{0\}\times\mathbb{S}^n$. Then 
	\begin{equation}\label{Min-B}
		\int_{\Sigma}{\varphi'H}\,d\mu-n(n+1)\int_{\Omega}{\varphi'}\,dv\geq n\omega_n^{\frac{1}{n}}\big(|\Sigma|^{\frac{n-1}{n}}-|\partial M|^{\frac{n-1}{n}}\big).
	\end{equation}
Moreover, equality holds in \eqref{Min-B} if and only if $\Sigma$ is a radial coordinate sphere. 
\end{thmA}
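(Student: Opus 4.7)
The plan is to apply the smooth inverse mean curvature flow (IMCF) to $\Sigma$, following the strategy pioneered by Gerhardt and Urbas. Since $\Sigma$ is strictly mean convex and star-shaped, I would invoke the version of Gerhardt's long-time existence result adapted to the AdS-Schwarzschild manifold: the flow $\{\Sigma_t\}_{t\ge 0}$ exists smoothly for all time, preserves star-shapedness and strict mean convexity (the evolving hypersurfaces remain radial graphs over $\mathbb{S}^n$), and the radial coordinate on $\Sigma_t$ tends to $+\infty$ while $\Sigma_t$ becomes asymptotic to a radial coordinate slice in a suitable rescaled $C^\infty$ sense.

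I would then introduce the normalized quantity
\[
\mathcal{Q}(t) \;:=\; |\Sigma_t|^{-\frac{n-1}{n}}\Big(\int_{\Sigma_t}\varphi' H\,d\mu - n(n+1)\int_{\Omega_t}\varphi'\,dv\Big) \;+\; n\omega_n^{\frac{1}{n}}\Big(\frac{|\partial M|}{|\Sigma_t|}\Big)^{\frac{n-1}{n}}.
\]
A direct computation using the ODE $(\varphi')^2 = 1 - m\varphi^{1-n} + \varphi^2$ and the defining relation for $s_0$ shows that $\mathcal{Q}(t)\equiv n\omega_n^{1/n}$ whenever $\Sigma_t$ is a radial coordinate slice. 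This is the candidate monotone quantity: showing $\mathcal{Q}'(t)\le 0$ along the flow, together with $\lim_{t\to\infty}\mathcal{Q}(t)=n\omega_n^{1/n}$ (from the asymptotic convergence to large coordinate slices), yields $\mathcal{Q}(0)\ge n\omega_n^{1/n}$, which is an immediate rearrangement of \eqref{Min-B}.

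To establish the monotonicity I would combine the standard evolution relations $\tfrac{d}{dt}|\Sigma_t|=|\Sigma_t|$, $\tfrac{d}{dt}\int_{\Omega_t}\varphi'\,dv=\int_{\Sigma_t}\varphi'/H\,d\mu$ and $\tfrac{d}{dt}d\mu=d\mu$ with
\[
\partial_t H \;=\; -\Delta_{\Sigma_t}(1/H) - (|A|^2 + \overline{\mathrm{Ric}}(\nu,\nu))/H, \qquad \partial_t\varphi' \;=\; \varphi''\,\langle\bar\nabla r,\nu\rangle/H.
\]
After integration by parts on $\Sigma_t$, the crucial step is to invoke the \emph{static potential identity} satisfied by $\varphi'$ in AdS-Schwarzschild --- the relation between $\bar\nabla^2\varphi'$, $\bar\Delta\varphi'$ and $\overline{\mathrm{Ric}}$ that characterizes $\varphi'$ as the lapse of a vacuum Einstein solution with negative cosmological constant. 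This identity lets one collapse the Hessian and Ricci contributions into linear combinations of $\int_{\Sigma_t}\varphi' H\,d\mu$ and $\int_{\Omega_t}\varphi'\,dv$, leaving a residual term proportional to $\int_{\Sigma_t}(|A|^2-H^2/n)\,\varphi'/H\,d\mu$, which is nonnegative by the Newton--Maclaurin inequality $|A|^2\ge H^2/n$.

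The main obstacle is precisely this combined evolution computation: algebraically organizing the Hessian and Ricci terms through the static identity so that the correction $n\omega_n^{1/n}(|\partial M|/|\Sigma_t|)^{(n-1)/n}$ in $\mathcal{Q}(t)$ appears with the exact coefficient needed for $\mathcal{Q}'(t)\le 0$, with no leftover boundary contribution from the horizon. Once monotonicity is secured, the rigidity in Newton--Maclaurin forces each $\Sigma_t$ to be totally umbilic in the equality case; combined with star-shapedness in the warped cylinder, this identifies the original $\Sigma$ as a radial coordinate sphere, completing the characterization of the equality case.
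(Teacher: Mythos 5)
Your overall strategy (IMCF, the normalized quantity $\mathcal{Q}(t)$, monotonicity plus the limit at infinity) is exactly the route the paper attributes to Brendle--Hung--Wang, and your $\mathcal{Q}(t)$ coincides with the paper's $Q(t)$. However, there is a genuine gap at the central step: the monotonicity does \emph{not} reduce, after the static identity and $|A|^2\ge H^2/n$, to a nonnegative umbilicity-defect term with ``no leftover boundary contribution from the horizon.'' Writing $f=\varphi'$ and using $\bar\nabla^2 f=f(\overline{\mathrm{Ric}}+(n+1)\bar g)$, $\bar\Delta f=(n+1)f$, the evolution computation gives $\frac{d}{dt}\int_{\Sigma_t}fH\,d\mu\le\frac{n-1}{n}\int_{\Sigma_t}fH\,d\mu+2\int_{\Sigma_t}\langle\bar\nabla f,\nu\rangle\,d\mu$ and $\frac{d}{dt}\int_{\Omega_t}f\,dv=\int_{\Sigma_t}\frac{f}{H}\,d\mu$, so $\mathcal{Q}'(t)\le0$ is \emph{equivalent} to the residual inequality
\begin{equation*}
2\int_{\Sigma_t}\langle\bar\nabla f,\nu\rangle\,d\mu-n(n+1)\int_{\Sigma_t}\frac{f}{H}\,d\mu+(n-1)(n+1)\int_{\Omega_t}f\,dv\;\le\;(n-1)\,\omega_n^{\frac1n}|\partial M|^{\frac{n-1}{n}},
\end{equation*}
which is not an identity and is not implied by the Newton--Maclaurin inequality. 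Using $\langle\bar\nabla f,\nu\rangle=\frac{\varphi''}{\varphi}u$ with $\frac{\varphi''}{\varphi}=1+\frac{(n-1)m}{2}\varphi^{-(n+1)}$, the flux identities $\int_{\Sigma_t}u\,d\mu=(n+1)\int_{\Omega_t}f\,dv+s_0|\partial M|$ (conformal Killing field $\varphi\partial_r$) and $\int_{\Sigma_t}\varphi^{-(n+1)}u\,d\mu=\omega_n$ (divergence-free field $\varphi^{-n}\partial_r$), together with $m=s_0^{n-1}+s_0^{n+1}$, this residual inequality is equivalent to
\begin{equation*}
n\int_{\Sigma_t}\frac{f}{H}\,d\mu\;\ge\;(n+1)\int_{\Omega_t}f\,dv+\omega_n s_0^{n+1},
\end{equation*}
i.e.\ precisely Brendle's sharp Heintze--Karcher-type inequality in the AdS-Schwarzschild manifold, including its horizon term. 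This is a substantial external ingredient of the BHW proof (and the place where mean convexity enters sharply); your proposal never invokes it, and the horizon fluxes above are exactly what balance the $n\omega_n^{1/n}|\partial M|^{(n-1)/n}$ correction, contrary to your expectation that no horizon contribution survives.

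Two smaller points: for the endgame you only need $\liminf_{t\to\infty}\mathcal{Q}(t)\ge n\omega_n^{1/n}$, and even that requires the quantitative roundness/gradient estimates for the IMCF in AdS-Schwarzschild (not just $C^0$ convergence to coordinate slices); and in the equality case you must use the rigidity of the Heintze--Karcher inequality (and of \eqref{S-es}-type umbilicity) rather than only the equality case of $|A|^2\ge H^2/n$ at a single time, since monotonicity alone gives equality statements only for a.e.\ $t$ along the flow.
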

For the purpose, they investigated the long time existence and asymptotic behavior of the inverse mean curvature flow in the AdS-Schwarzschild manifold. They found that the quantity
\begin{equation}
	Q(t)=|\Sigma_t|^{-\frac{n-1}{n}}\left(\int_{\Sigma_t}{\varphi'H}\,d\mu-n(n+1)\int_{\Omega_t}{\varphi'}\,dv+n\omega_n^{\frac{1}{n}}|\partial M|^{\frac{n-1}{n}}\right)
\end{equation}
is monotone decreasing along the inverse mean curvature flow and satisfies $\liminf_{t\to\infty}Q(t)\geq n\omega_n^{\frac{1}{n}}$, then inequality \eqref{Min-B} follows immediately. As limit cases, they also obtained the Minkowski type inequality in the Schwarzschild manifold and the hyperbolic space. Inspired by their work, inequality \eqref{Min-B} has been established in many other warped products, such as the Kottler space \cite{GWWX-15} and the Reissner-Nordström-anti-deSitter manifold \cites{CLZ19,Wang15}.

There are also Minkowski type inequalities different from \eqref{Min-B}. In \cite{Scheuer22}, Scheuer investigated a locally constrained inverse curvature flow in general warped products which satisfy mild assumptions and proved a new Minkowski type inequality for strictly convex surfaces, we summarize his result in the following theorem.
\begin{thmB}[\cite{Scheuer22}]
	Let $M=(a,b)\times\mathcal{S}_0$ be a 3-dimensional warped product space equipped with the metric $\bar{g}=dr^2+{\vartheta}^2(r)\sigma$, where $(\mathcal{S}_0,\sigma)$ is a compact Riemannian manifold of dimension 2 and $\vartheta\in C^{\infty}([a,b])$. Assume further that $(M,\bar{g})$ satisfies the following assumptions:
	\begin{itemize}
		\item[(i)] $\vartheta'>0$,
		\item[(ii)] either one of the following conditions holds:
		\begin{itemize}
			\item[(a)] $\vartheta''\geq 0$,
			\item[(b)] $\vartheta''\leq 0$ and 
			\begin{equation*}
				\partial_r\left(\frac{\vartheta''}{\vartheta}\right)\leq 0,
			\end{equation*}
		\end{itemize}
	    \item[(iii)] $\widehat{Ric}\geq (\vartheta'^2(r)-\vartheta''\vartheta(r))\sigma$ for all r, where $\widehat{Ric}$ is the Ricci curvature of the metric $\sigma$.
	\end{itemize}
Then, if $\Sigma\subset M$ is a strictly convex graph over $\mathcal{S}_0$ which encloses a domain $\Omega$, there holds
\begin{equation}\label{MinS}
	\int_{\Sigma}{H}\,d\mu+\int_{\Omega}{\overline{Ric}(\partial_r,\partial_r)}\,dv\geq\chi(|\Sigma|),
\end{equation}
where $\overline{Ric}$ denotes the Ricci curvature of $M$, $\chi$ is the function that gives equality on radial coordinate spheres. If equality holds, then $\Sigma$ is totally umbilical. If the associated quadratic forms in assumption (iii) satisfy the strict inequality on nonzero vectors, then inequality holds in \eqref{MinS} precisely on radial coordinate spheres.
\end{thmB}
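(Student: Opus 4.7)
The plan is to prove the inequality via a locally constrained inverse curvature flow adapted to the quermassintegral $\int_\Sigma H\,d\mu + \int_\Omega \overline{\mathrm{Ric}}(\partial_r,\partial_r)\,dv$. Specifically, I would evolve $\Sigma$ by
$$\dot{X} = \left(\frac{\vartheta'(r)}{H} - u\right)\nu,$$
where $u = \bar{g}(\vartheta(r)\partial_r,\nu)$ is the support function associated with the conformal Killing field $\vartheta\partial_r$ and $\nu$ is the outward unit normal. The radial coordinate spheres are the stationary points of this flow (since $Hu = \vartheta'$ on such spheres), so it is the natural candidate for a variational proof, and one expects monotonicity of the target functional combined with convergence to a round radial sphere.

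The first technical task is to obtain the a priori estimates yielding long-time existence and convergence. A $C^0$ bound comes from comparison with radial coordinate spheres as barriers, using $\vartheta' > 0$. A $C^1$ bound follows by analyzing the evolution of $\langle\nu,\partial_r\rangle^{-1}$ on the radial graph. The central difficulty, and the main obstacle of the proof, is the preservation of strict convexity together with two-sided $C^2$ bounds: applying the maximum principle to the smallest principal curvature, the reaction terms involve $\vartheta''$ together with the ambient Ricci curvature, and controlling their sign is precisely the content of hypotheses (ii) and (iii); the case split (ii)(a) versus (ii)(b) reflects two complementary parameter regimes handled by parallel maximum-principle arguments. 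Standard parabolic theory then yields long-time existence, and convergence to a radial coordinate sphere follows from the monotonicity step below combined with compactness.

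The final step is the monotonicity computation. Using the evolution equations $\partial_t H = -\Delta\beta - (|A|^2 + \overline{\mathrm{Ric}}(\nu,\nu))\beta$ and $\partial_t\, d\mu = H\beta\, d\mu$ with $\beta = \vartheta'/H - u$, integrating by parts on $\Sigma_t$, and invoking the Gauss equation in dimension $2$ (where $H^2 - |A|^2 = 2\kappa_1\kappa_2$), the time derivative of
$$\int_{\Sigma_t}H\,d\mu + \int_{\Omega_t}\overline{\mathrm{Ric}}(\partial_r,\partial_r)\,dv - \chi(|\Sigma_t|)$$
should reorganize into a nonpositive quadratic form in $(\kappa_1-\kappa_2)$ plus a curvature contribution on $\mathcal{S}_0$ controlled by the Ricci bound in hypothesis (iii). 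The inequality then follows by integrating in time and taking $t\to\infty$, at which point $\Sigma_t$ approaches a radial sphere where the functional vanishes by definition of $\chi$. Equality along the way forces $\kappa_1 = \kappa_2$ pointwise, whence $\Sigma$ is totally umbilical; under the strict version of (iii), vanishing of the curvature contribution further forces $\Sigma$ to be a radial coordinate sphere.
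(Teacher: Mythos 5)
This statement (Theorem B) is not proved in the present paper at all: it is quoted verbatim from Scheuer's work \cite{Scheuer22}, so there is no in-paper argument to compare against. Your outline is essentially Scheuer's own strategy — a locally constrained inverse curvature flow whose stationary points are the radial coordinate spheres, with preservation of strict convexity as the dimension-restricting difficulty (this is exactly why the result is only stated for surfaces in $M^3$) and a monotonicity computation for the quermassintegral giving the inequality in the limit — so the approach is the right one. One normalization slip: with the convention $H=\kappa_1+\kappa_2$ used in the statement, the speed should be $\frac{2\vartheta'}{H}-u$ (generally $\frac{n\vartheta'}{H}-u$); on a radial coordinate sphere $Hu=2\vartheta'$, not $\vartheta'$, so with your speed coordinate spheres are not stationary. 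With the corrected speed the Minkowski identity $\int_{\Sigma}\bigl(2\vartheta'-Hu\bigr)\,d\mu=0$ shows that $|\Sigma_t|$ is constant along the flow, so $\chi(|\Sigma_t|)$ is constant and the monotonicity only needs to be proved for $\int_{\Sigma_t}H\,d\mu+\int_{\Omega_t}\overline{Ric}(\partial_r,\partial_r)\,dv$ itself, which is how the cited proof is organized. Finally, be aware that the two genuinely hard steps — the maximum-principle argument preserving convexity under hypothesis (ii), and the precise integral identity in the monotonicity computation where hypothesis (iii) enters — are asserted rather than carried out in your sketch; you have located them correctly, but as written they are placeholders rather than proofs.
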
 

The functional 
\begin{equation*}
	\int_{\Sigma}{H}\,d\mu+\int_{\Omega}{\overline{Ric}(\partial_r,\partial_r)}dv
\end{equation*}
can be seen as a natural quermassintegral defined for smooth domains in general warped products, since it reduces to $\int_{\Sigma}{H}\,d\mu+nc\mathrm{Vol}(\Omega)$ in the space form $M^{n+1}(c)$, which coincides with the one given in integral geometry \cites{San04,Sol06}. Since the long time existence and convergence result is not available for the higher dimensional locally constrained curvature flow which Scheuer \cite{Scheuer22} considered, he could only prove \eqref{MinS} for strictly convex surfaces in $M^3$.

It remains an open question that whether inequality \eqref{MinS} holds for strictly (weakly) mean convex and star-shaped hypersurfaces in general warped products in addition to $\mathbb{R}^{n+1}$ and $\mathbb{H}^{n+1}$. Our paper gives a confirmation to this question in warped cylinders which are asymptotically flat or hyperbolic in the sense of Assumption \ref{ass-1}. 

\begin{assu}\label{ass-1}
    Let $n\geq 2$,  $M^{n+1}=[a,\infty)\times\mathbb{S}^n$ be a warped cylinder endowed with the metric \eqref{metric-1}. Fixing $\kappa\geq 0$, we suppose that $\varphi$ satisfies the following conditions for all $r\in(a,\infty)$:
    \begin{itemize}
        \item[(i)] $\varphi''(r)\geq\kappa \varphi(r)$,
        \item[(ii)] $0<\varphi'(r)\leq \sqrt{1+\kappa\varphi^2(r)}$,
        \item[(iii)] $\partial_r\left(\frac{\varphi''}{\varphi}\right)\leq 0$.
    \end{itemize}
    We will call $\{a\}\times\mathbb{S}^n$ the horizon of $M$, and denote it as $\partial M$.
\end{assu}
\begin{rem}\label{Rem-D}
    Besides the space form of constant non-positive curvature, the deSitter-Schwarzschild manifold given in \eqref{metric-D} with $\kappa\geq 0$
    also satisfies the conditions in Assumption \ref{ass-1}.
\end{rem}

Now we state our main theorem of the Minkowski type inequality for weakly mean convex and star-shaped hypersurfaces in warped cylinders $M^{n+1}$ which are asymptotically flat or hyperbolic in the sense of Assumption \ref{ass-1}. We say a hypersurface $\Sigma\subset M$ is star-shaped if its support function $u:=\bar{g}(\varphi\partial_r,\nu)>0$, where $\nu$ is the unit outward normal of $\Sigma$.

\begin{thm}\label{Min-In1}
	Let $(M^{n+1},\bar{g})$ satisfies Assumption \ref{ass-1}. Assume that $\Sigma$ is a compact, weakly mean convex and star-shaped hypersurface in $M$, and let $\Omega$ be the region bounded by $\Sigma$ and the horizon $\partial{M}$. Then
	\begin{equation}\label{Mintype}
		\int_{\Sigma}{H}\,d\mu+\int_{\Omega}{\overline{Ric}(\partial_r,\partial_r)}\,dv\geq \xi(|\Sigma|),
	\end{equation}
where $\xi$ is the unique monotonically increasing function that gives equality on radial coordinate spheres. Moreover, the equality holds in \eqref{Mintype} if and only if one of the following two cases holds:
\begin{itemize}
    \item[a)]$\Sigma$ is a radial coordinate sphere in $M$;
    \item[b)]there exists a radial coordinate ball $B(R)$(may be empty) such that $M\setminus B(R)$ is isometric to a space form of constant nonpositive curvature and  $\Sigma$ is a geodesic sphere in it.
\end{itemize}
\end{thm}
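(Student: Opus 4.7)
My plan is to adapt the Geroch monotonicity technique via the inverse mean curvature flow (IMCF), in the spirit of Guan--Li \cite{G-L2009}, Brendle--Hung--Wang \cite{BHW16}, and Brendle--Guan--Li \cite{BGL}, combined with the Huisken--Ilmanen weak formulation to accommodate the weakly mean convex regularity. For strictly mean convex star-shaped $\Sigma$, I would run the smooth IMCF $\partial_t X = \nu/H$ starting from $\Sigma_0 = \Sigma$; under Assumption \ref{ass-1}, Gerhardt--Urbas-type arguments exploiting positivity of the support function $u=\bar g(\varphi\partial_r,\nu)$ should yield long-time existence and preservation of star-shapedness, with $\Sigma_t$ becoming asymptotic to coordinate spheres as $t\to\infty$.

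\textbf{Monotone functional.} Introduce the Geroch-type quantity
\begin{equation*}
Q(t) := \int_{\Sigma_t} H\, d\mu + \int_{\Omega_t}\overline{Ric}(\partial_r,\partial_r)\, dv - \xi(|\Sigma_t|),
\end{equation*}
normalised so that $Q\equiv 0$ on coordinate spheres. Differentiating under the flow using $\partial_t d\mu = d\mu$, $\partial_t H = -\Delta(1/H) - (|A|^2 + \overline{Ric}(\nu,\nu))/H$, and $\tfrac{d}{dt}\int_{\Omega_t} f\, dv = \int_{\Sigma_t} f/H\, d\mu$, I would obtain
\begin{equation*}
\frac{dQ}{dt} = \int_{\Sigma_t}\!\biggl(H - \frac{|A|^2}{H} + \frac{\overline{Ric}(\partial_r,\partial_r) - \overline{Ric}(\nu,\nu)}{H}\biggr) d\mu - \xi'(|\Sigma_t|)\,|\Sigma_t|.
\end{equation*}
Decomposing $\nu = \cos\theta\,\partial_r + \sin\theta\, V$ with $V$ a unit sphere-tangent vector, the warped product structure forces
\begin{equation*}
\overline{Ric}(\partial_r,\partial_r) - \overline{Ric}(\nu,\nu) = -\tfrac{n-1}{\varphi^2}\sin^2\theta\,\bigl(\varphi\varphi'' + 1 - (\varphi')^2\bigr) \leq 0,
\end{equation*}
the sign coming from Assumption \ref{ass-1}(i)--(ii). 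I expect to combine this with the Newton--MacLaurin inequality $|A|^2 \geq H^2/n$, the Hsiung--Minkowski identity $\int_{\Sigma_t} Hu\, d\mu = n\int_{\Sigma_t}\varphi'\, d\mu$, and a Heintze--Karcher-type inequality in warped cylinders (needed to match the pointwise $\int H$ term with the area-only term $\xi'(|\Sigma_t|)|\Sigma_t|$); Assumption \ref{ass-1}(iii), i.e.\ monotonicity of $\varphi''/\varphi$, is expected to be used precisely in this Heintze--Karcher step, yielding $\frac{dQ}{dt}\leq 0$.

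\textbf{Asymptotics and weak extension.} Long-time convergence to a coordinate sphere gives $\lim_{t\to\infty} Q(t) = 0$, so monotonicity delivers $Q(0) \geq 0$, i.e.\ \eqref{Mintype}, in the strictly mean convex case. For merely weakly mean convex star-shaped $\Sigma$ (i.e.\ $H\geq 0$), I would either approximate by strictly mean convex star-shaped hypersurfaces $\Sigma^\varepsilon$ obtained by outward perturbation of the radial graph function and pass to the limit using continuity of each term in \eqref{Mintype}, or work directly with the Huisken--Ilmanen weak solution: exponential area growth $|\Sigma_t| = e^t|\Sigma_0|$ persists, and jumps replace $\Sigma_t$ by its outward-minimizing hull, across which the Geroch quantity $Q$ remains non-increasing.

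\textbf{Rigidity and main obstacle.} If equality holds in \eqref{Mintype}, then $dQ/dt \equiv 0$ along the flow, forcing pointwise equality in Newton--MacLaurin (so $\Sigma_t$ is totally umbilical for every $t$) and vanishing of the Ricci-difference term $\sin^2\theta\,(\varphi\varphi'' + 1 - (\varphi')^2)$. Either $\sin\theta \equiv 0$ along the flow, so $\Sigma$ is itself a coordinate sphere (case (a)); or $\varphi\varphi'' + 1 - (\varphi')^2 \equiv 0$ on the region swept by the flow, which combined with Assumption \ref{ass-1}(iii) forces $M$ to be isometric to a space form of constant non-positive curvature outside some coordinate ball $B(R)$ with $\Sigma$ a geodesic sphere in the space-form region (case (b)). The main technical hurdle is the monotonicity step: extracting $dQ/dt \leq 0$ without circular use of the Minkowski-type inequality one is trying to prove requires a sharp Heintze--Karcher/Minkowski-identity argument that simultaneously exploits all three parts of Assumption \ref{ass-1}. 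The weakly mean convex extension is also delicate: the standard weak IMCF may jump, and one must carefully verify that both star-shapedness and the monotonicity of $Q$ survive this jumping.
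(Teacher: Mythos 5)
Your proposal has the right skeleton (smooth IMCF, a Geroch-type quantity, asymptotics, then an approximation for the weakly mean convex case), but the central step is missing, and you in fact flag it yourself as the ``main technical hurdle.'' The evolution computation only gives
\begin{equation*}
\frac{d}{dt}\Bigl(\mathcal{W}(\Sigma_t)-\xi(|\Sigma_t|)\Bigr)\;\leq\;\frac{n-1}{n}\int_{\Sigma_t}H\,d\mu-\xi'(|\Sigma_t|)\,|\Sigma_t|,
\end{equation*}
and since $\xi$ satisfies the ODE $\xi'(x)x=\frac{n-1}{n}\bigl(\xi(x)+n\xi_1(x)\bigr)$ coming from coordinate spheres, closing this requires exactly the comparison $n\int_{\Omega_t}\frac{\varphi''}{\varphi}\,dv\leq n\,\xi_1(|\Sigma_t|)$, i.e.\ a \emph{weighted isoperimetric inequality} in the warped cylinder. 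This is the paper's key lemma (Theorem \ref{thm-isop}), and it is not a Hsiung--Minkowski identity nor a Heintze--Karcher inequality: it rests on Chodosh's isoperimetric theorem for warped cylinders (Theorem D, using Assumption \ref{ass-1} (i)--(ii)) together with the monotonicity of $\varphi''/\varphi$ from Assumption \ref{ass-1} (iii). Without it the monotonicity step does not close; and even with it, what one obtains is $\frac{d}{dt}(\mathcal{W}-\xi)\leq\frac{n-1}{n}(\mathcal{W}-\xi)$, so the monotone quantity is the normalized $\mathcal{G}(t)=|\Sigma_t|^{-\frac{n-1}{n}}(\mathcal{W}(\Sigma_t)-\xi(|\Sigma_t|))$, not your unnormalized $Q(t)$; already in $\mathbb{R}^{n+1}$, $\frac{dQ}{dt}\leq 0$ would force $\int_{\Sigma_t}H\,d\mu\leq n\omega_n^{1/n}|\Sigma_t|^{\frac{n-1}{n}}$, which is false for non-spherical hypersurfaces. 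Relatedly, your claim $\lim_{t\to\infty}Q(t)=0$ is unjustified: $\Sigma_t$ escapes to infinity and the deficit is multiplied by $|\Sigma_t|^{\frac{n-1}{n}}=e^{\frac{n-1}{n}t}|\Sigma|^{\frac{n-1}{n}}$. The correct statement is only $\liminf_{t\to\infty}\mathcal{G}(t)\geq 0$, and its proof again needs the weighted isoperimetric inequality (to replace $\int_{\Omega_t}\varphi''/\varphi$ by $\xi_1(|\Sigma_t|)$, since $\Omega_t$ is not a coordinate ball) together with quantitative flow asymptotics ($H=n\sqrt{\kappa}+O(te^{-2t/n})$ when $\kappa>0$; $|\varphi H-n\varphi'|=O(e^{-\alpha t})$, the area-element expansion and $\varphi\sim\bar\varphi e^{t/n\pm\varepsilon(t)}$ when $\kappa=0$), which the paper extracts from Scheuer's estimates after verifying Assumption \ref{ass-1} implies his hypotheses.

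Two further gaps in the weak mean convexity part. First, your ``outward perturbation of the radial graph'' does not obviously produce strictly mean convex approximators; the paper instead runs the mean curvature flow for a short time (Li--Wei) to get strict mean convexity while keeping star-shapedness. Second, and more importantly, your rigidity argument presupposes that the smooth IMCF starts from $\Sigma$, which fails when $H$ vanishes somewhere; using the Huisken--Ilmanen weak flow here is not a routine fix (star-shapedness and the monotonicity across jumps are exactly the delicate points you mention but do not resolve, and the paper does not take this route for Theorem \ref{Min-In1}). The paper instead handles equality in the weakly mean convex case by a Guan--Li first-variation argument: on $\Sigma_+=\{H>0\}$ (nonempty by the elliptic point lemma), the vanishing of the first variation of $\mathcal{L}(\Sigma_s)=\mathcal{W}(\Sigma_s)-\xi(|\Sigma_s|)$ forces $H\geq\frac{n}{n-1}\xi'(|\Sigma|)>0$ there, so $\Sigma_+$ is closed and $\Sigma$ is strictly mean convex, after which the smooth-flow rigidity (umbilicity plus vanishing of the Ricci difference, split into the two cases governed by the monotone quantity $(\varphi')^2/(1+\kappa\varphi^2)$) applies; one must also verify the converse direction that geodesic spheres in the space-form end actually achieve equality, which uses the identity equating $\int_{\Omega_t}\overline{Ric}(\partial_r,\partial_r)\,dv$ with its value on the coordinate ball of the same area.
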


Combining with Remark \ref{Rem-D}, we have the following Minkowski type inequality for weakly mean convex and star-shaped hypersurfaces in the deSitter-Schwarzschild manifold which has nonpositive radial Ricci curvature.

\begin{cor}\label{Min-InD}
	Let $\Sigma$ be a closed, weakly mean convex and star-shaped hypersurface in the deSitter-Schwarzschild manifold $M^{n+1}$ with $\kappa\geq 0$ (see \eqref{metric-D} for definition), and let $\Omega$ be the region bounded by $\Sigma$ and the horizon $\partial{M}$. Then
	\begin{equation}\label{MintypeD}
		\int_{\Sigma}{H}\,d\mu+\int_{\Omega}{\overline{Ric}(\partial_r,\partial_r)}\,dv\geq \xi(|\Sigma|),
	\end{equation}
where $\xi$ is the unique monotonically increasing function that gives equality on radial coordinate spheres, and the equality holds in \eqref{MintypeD} if and only if $\Sigma$ is a radial coordinate sphere.
\end{cor}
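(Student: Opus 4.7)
The plan is to deduce Corollary \ref{Min-InD} directly from Theorem \ref{Min-In1}, by verifying Assumption \ref{ass-1} for the deSitter--Schwarzschild metric (the content of Remark \ref{Rem-D}) and then ruling out the rigidity alternative b).

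First, differentiating the defining ODE $(\varphi')^2 = 1 - m\varphi^{1-n} + \kappa\varphi^2$ gives
\begin{equation*}
\varphi''(r) = \tfrac{m(n-1)}{2}\,\varphi^{-n} + \kappa\varphi,\qquad \frac{\varphi''}{\varphi} = \kappa + \tfrac{m(n-1)}{2}\,\varphi^{-n-1}.
\end{equation*}
From these explicit formulas I would check Assumption \ref{ass-1} term by term: (i) $\varphi'' - \kappa\varphi = \tfrac{m(n-1)}{2}\varphi^{-n} > 0$ since $m>0$ and $n\geq 2$; (ii) the bound $(\varphi')^2 = 1 + \kappa\varphi^2 - m\varphi^{1-n} \leq 1 + \kappa\varphi^2$ is immediate, while $\varphi' > 0$ on $(0,\infty)$ follows because $s_0$ is the unique positive root of $1 - m\varphi^{1-n} + \kappa\varphi^2 = 0$ and $\varphi''(0) = \tfrac{m(n-1)}{2}s_0^{-n} > 0$ drives $\varphi$ strictly above $s_0$ for $r>0$; (iii) $\partial_r(\varphi''/\varphi) = -\tfrac{m(n-1)(n+1)}{2}\varphi^{-n-2}\varphi' < 0$. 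Hence Assumption \ref{ass-1} is satisfied, and Theorem \ref{Min-In1} delivers inequality \eqref{MintypeD} as well as the two possible rigidity scenarios.

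The remaining (and only substantive) task is to exclude case b). If some $M\setminus B(R)$, with $B(R)$ possibly empty, were isometric to a space form of constant nonpositive sectional curvature, the sectional curvatures on that open region would be constant. But the radial sectional curvatures of the warped cylinder $M$ are $-\varphi''/\varphi = -\kappa - \tfrac{m(n-1)}{2}\varphi^{-n-1}$, which depends nontrivially on $r$ (it is strictly monotone in $r$ by the computation in (iii) together with $\varphi'>0$) and so is not constant on any open interval of radii. This contradicts the putative isometry, ruling out case b). Therefore equality in \eqref{MintypeD} forces case a): $\Sigma$ is a radial coordinate sphere.

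The main conceptual point is the last one: the extra mass term $\tfrac{m(n-1)}{2}\varphi^{-n}$ that distinguishes deSitter--Schwarzschild from a space form is exactly the obstruction that makes $\varphi''/\varphi$ non-constant, and it is this non-constancy that collapses the dichotomy of Theorem \ref{Min-In1} to the single alternative a). The verification of Assumption \ref{ass-1} is routine once one differentiates the ODE; no further analytic difficulty arises.
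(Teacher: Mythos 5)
Your proposal is correct and follows essentially the same route as the paper: verify Assumption \ref{ass-1} by differentiating the defining ODE (this is the unproved content of Remark \ref{Rem-D}), apply Theorem \ref{Min-In1}, and observe that the mass term $\tfrac{m(n-1)}{2}\varphi^{-n}$ rules out the space-form alternative b). Your exclusion of case b) via non-constancy of the radial sectional curvature $-\varphi''/\varphi$ is equivalent to the paper's implicit mechanism, namely that $\varphi'<\sqrt{1+\kappa\varphi^2}$ holds strictly everywhere so only Case 1 of the rigidity analysis in Proposition \ref{monprop} can occur.
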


In the second part of our paper, we focus on the Schwarzschild manifold $M^{n+1}$ and the hyperbolic space $\mathbb{H}^{n+1}$, and aim to prove the inequality \eqref{Mintype} for outward minimizing hypersurfaces using the weak solution of the inverse mean curvature flow. 

Let $\Omega$ be a bounded domain with smooth boundary $\partial\Omega$ in the Schwarzschild manifold $M^{n+1}$, then there are two cases: (i)\,$\Omega$ has only one boundary component $\Sigma=\partial\Omega$, and we say $\Sigma$ is null-homologous; (ii)\, $\Omega$ has two boundary components $\partial\Omega=\Sigma\cup\partial M$, and we say $\Sigma$ is homologous to the horizon. In both cases, the boundary hypersurface $\Sigma$ is said to be outward minimizing if whenever $\Omega^{\#}$ is a domain containing $\Omega$, then $|\partial\Omega^{\#}|\geq |\partial\Omega|$. From the first variational formula for area functional, an outward minimizing hypersurface must be weakly mean convex.

The theory of weak solution of the inverse mean curvature flow was developed by Huisken and Ilmanen \cite{HI-2001}, and was applied to prove the Riemannian Penrose inequality for asymptotically flat 3-manifold with nonnegative scalar curvature and other interesting problems. In particular, Huisken removed the star-shaped assumption and verified that the Minkowski inequality \eqref{MinTy} holds for outward-minimizing hypersurfaces in $\mathbb{R}^{n+1}$. Later, Wei \cite{Wei18} generalized Huisken's result to outward minimizing hypersurfaces in the Schwarzschild manifold, which we give a brief review here.

\begin{thmC}[\cite{Wei18}]\label{Thm-wei18}
	Let $\Omega$ be a bounded domain with smooth and outward minimizing boundary in the Schwarzschild manifold $(M^{n+1},\bar{g})$. Assume either
	\begin{itemize}
		\item[(i)] $n<7$, or 
		\item[(ii)] $n\geq 7$ and $\Sigma=\partial\Omega\setminus{\partial M}$ is homologous to the horizon.
	\end{itemize}
Then
\begin{equation}\label{Weak-Min}
	\frac{1}{n\omega_n}\int_{\Sigma}{fH}\,d\mu\geq \left(\frac{|\Sigma|}{\omega_n}\right)^{\frac{n-1}{n}}-m.
\end{equation}
\end{thmC}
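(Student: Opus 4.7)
The plan is to apply the Huisken--Ilmanen weak inverse mean curvature flow $\{\Sigma_t\}_{t\geq 0}$ starting from $\Sigma=\Sigma_0$ and to identify a Geroch-type quantity whose monotonicity controls $\int_\Sigma fH\,d\mu$. Rearranging \eqref{Weak-Min}, the claim is equivalent to
\[
Q(0):=|\Sigma|^{-\frac{n-1}{n}}\left(\int_{\Sigma}fH\,d\mu+n\omega_n m\right)\geq n\omega_n^{1/n},
\]
so I would prove (a) $Q(t)$ is non-increasing along the weak flow, and (b) $\lim_{t\to\infty}Q(t)=n\omega_n^{1/n}$. A direct calculation on the radial coordinate sphere $\Sigma_r$ (using $|\Sigma_r|=\omega_n\varphi^n$, $H=n\varphi'/\varphi$, and $(\varphi')^2=1-m\varphi^{1-n}$ in Schwarzschild) yields $Q\equiv n\omega_n^{1/n}$; this both motivates the correction $n\omega_n m$ (which matches the horizon contribution $n\omega_n^{1/n}|\partial M|^{(n-1)/n}$) and predicts the asymptotic value. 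The outward-minimizing hypothesis ensures $\partial E_0=\Sigma$ with no initial jump of the weak flow, while the dichotomy $n<7$ versus ``homologous to the horizon'' is what will be needed later to guarantee regularity of the strictly outward-minimizing hulls that may appear.

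For smooth IMCF one has $\partial_t d\mu = d\mu$, $\tfrac{d}{dt}|\Sigma_t|=|\Sigma_t|$ and the standard evolution
\[
\partial_t H=-\Delta H^{-1}-\frac{|A|^2+\overline{Ric}(\nu,\nu)}{H}.
\]
Differentiating $\int_{\Sigma_t}fH\,d\mu$, integrating by parts to move $\Delta$ off $H^{-1}$, and inserting the ambient Hessian of $f=\varphi'$ (whose warped-product form is explicit, and whose tangential Laplacian produces exactly the curvature terms one needs), the derivative of $Q$ collects into a divergence plus a signed bulk term that is nonpositive by the Cauchy--Schwarz bound $|A|^2\geq H^2/n$, together with $f>0$ and $\overline{Ric}(\partial_r,\partial_r)=-n\varphi''/\varphi<0$ specific to Schwarzschild. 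Thus $Q'(t)\leq 0$ on smooth intervals of the flow, with equality forcing umbilicity of $\Sigma_t$.

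To extend monotonicity to the weak flow I would invoke Huisken--Ilmanen's elliptic regularization: the functions $u_\varepsilon$ solving $\mathrm{div}(\nabla u_\varepsilon/|\nabla u_\varepsilon|_\varepsilon)=|\nabla u_\varepsilon|_\varepsilon$ yield smooth IMCF via graphs in $M\times\mathbb{R}$, to which the pointwise monotonicity above applies; sending $\varepsilon\to 0$ transfers $Q'\leq 0$ to the continuity times of the weak flow. At each jump time the sublevel set $E_t$ is replaced by its strictly outward-minimizing hull $E_t'$ with $|\partial E_t'|=|\partial E_t|$; the new boundary is weakly mean convex with $H=0$ on the portion disjoint from $\partial E_t$, and a comparison argument using $f>0$ shows $\int fH\,d\mu$ cannot increase, so $Q$ does not jump upward. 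The condition $n<7$ (or the horizon-homology hypothesis in higher dimensions) is precisely what makes $\partial E_t'$ a $C^{1,\alpha}$ hypersurface by Federer-type regularity, which is needed to interpret the integrand classically across the jump.

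The final step is the asymptotic identification $\lim_{t\to\infty}Q(t)=n\omega_n^{1/n}$, which follows from Huisken--Ilmanen's asymptotic roundness estimate for weak IMCF in an asymptotically flat end: suitably rescaled, $\Sigma_t$ converges in $C^{1,\alpha}$ to large Euclidean spheres, along which $f\to 1$ and $Q\to n\omega_n^{1/n}$ by the coordinate-sphere computation above. Combined with the monotonicity this yields $Q(0)\geq n\omega_n^{1/n}$, which is \eqref{Weak-Min}. I expect the main obstacle to be the rigorous treatment of jump times: controlling $\int fH\,d\mu$ when $\partial E_t'$ consists of an old smooth piece glued to a new (barely regular) minimal piece, and verifying that the weighted Geroch monotonicity actually passes across the jump. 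This is precisely the step where the dimension/homology hypothesis of Theorem~\ref{Thm-wei18} becomes essential.
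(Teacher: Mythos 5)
Your overall architecture --- the rescaled quantity $Q$, monotonicity along the weak IMCF, jump control via $H=0$ on the newly created minimal portions, and the blow-down asymptotics --- is exactly the strategy of Wei's proof (this paper only cites \cite{Wei18} for Theorem C; its own \S 5 runs the same machinery for the functional in Theorem \ref{Min-In2}). However, there is a genuine gap in your monotonicity step. Once you use the static identities $\bar{\Delta} f=0$ and $\bar{\nabla}^2 f=f\,\overline{Ric}$ (this is the actual role of the ``ambient Hessian of $f=\varphi'$''), the ambient Ricci term cancels identically, and what survives is
\begin{equation*}
\frac{d}{dt}\int_{\Sigma_t} fH\,d\mu=\int_{\Sigma_t}\Big(2\langle\bar{\nabla} f,\nu\rangle+fH-\frac{f|A|^2}{H}\Big)\,d\mu\leq 2\int_{\Sigma_t}\langle\bar{\nabla} f,\nu\rangle\,d\mu+\frac{n-1}{n}\int_{\Sigma_t} fH\,d\mu .
\end{equation*}
The term $2\int_{\Sigma_t}\langle\bar{\nabla} f,\nu\rangle\,d\mu$ is positive and of unit size; it is a normal derivative, not a tangential divergence, and it is not removed by $|A|^2\geq H^2/n$, by $f>0$, or by the negativity of $\overline{Ric}(\partial_r,\partial_r)$ --- the Ricci sign plays no role for this weighted quantity, so your attribution of the good sign to it is misplaced. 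The missing ingredient is the flux identity: since $\bar{\Delta} f=0$, the divergence theorem gives $\int_{\Sigma_t}\langle\bar{\nabla} f,\nu\rangle\,d\mu=\int_{\partial M}|\bar{\nabla} f|\,d\mu=\tfrac{(n-1)\omega_n m}{2}$ when $\Sigma_t$ is homologous to the horizon (and $=0\leq\tfrac{(n-1)\omega_n m}{2}$ in the null-homologous case). Only with this bound does one get $\frac{d}{dt}\big(\int_{\Sigma_t} fH\,d\mu+n\omega_n m\big)\leq\frac{n-1}{n}\big(\int_{\Sigma_t} fH\,d\mu+n\omega_n m\big)$, which is precisely what makes your additive constant $n\omega_n m=n\omega_n^{1/n}|\partial M|^{(n-1)/n}$ the correct correction; matching the constant on coordinate spheres identifies its value but not the mechanism, and without the flux computation $Q'\leq 0$ does not follow from the pointwise signs you list.

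Two further points of comparison. For the weak flow, Wei (and \S\ref{Sec-5} of this paper, cf.\ Lemma \ref{Lem-5.1} and Lemma \ref{Lem-mohh}) obtains the integral (Gronwall) form of this monotonicity directly from the level-set formulation together with the $C^{1,\beta}$ convergence, weak convergence of mean curvature, and the comparison $H_{\Sigma_t'}=0$ off $\Sigma_t$, rather than through elliptic regularization; your regularization route is viable but you must carry the weighted flux term through the $\varepsilon\to 0$ limit, and you also need the exact exponential growth $|\Sigma_t|=e^t|\Sigma|$ (Theorem \ref{Prop-Weak}(5), which is where outward minimizing enters) for the $|\Sigma_t|^{-\frac{n-1}{n}}$ scaling. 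Finally, for the asymptotics one only needs $\liminf_{t\to\infty}Q(t)\geq n\omega_n^{1/n}$: the blow-down gives $|\Sigma_t|^{-\frac{n-1}{n}}\int_{\Sigma_t}H\,d\mu\to n\omega_n^{1/n}$ in the weak sense of Proposition \ref{prop-lim}, and since $0<f<1$ with $f\to 1$ uniformly on $\Sigma_t$ and $H\geq 0$, the weighted integral is bounded below by $(1-\epsilon_t)$ times the unweighted one; your phrasing ``$f\to 1$'' is fine but note the inequality goes in the needed direction only because $H\geq 0$.
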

\begin{rem}
	Our definition of the metric \eqref{metric-D} for the Schwarzschild manifold differs from the one in \cite{Wei18} by changing the variable $m$ to $2m$, and hence the inequality \eqref{Weak-Min} we expressed here is slightly different from the one in the original paper \cite{Wei18}.
\end{rem}

Later, McCormick \cite{Mc-18} proved inequality \eqref{Weak-Min} in a class of static asymptotically flat manifolds for $2\leq n<7$. Harvie and Wang \cite{HW-2024} extended McCormick's result to all 
dimension $n$ assuming the boundary is connected and characterized the equality case. Following the similar idea as in \cite{Wei18}, we would prove that inequality \eqref{Mintype} also holds for outward minimizing hypersurfaces $\Sigma$ in the Schwarzschild manifold. 

\begin{thm}\label{Min-In2}
Let $\Omega$ be a bounded domain with smooth and outward minimizing boundary in the Schwarzschild manifold $(M^{n+1},\bar{g})$. Assume either 
 \begin{itemize}
     \item[(i)] $n<7$, $\Sigma$ is null homologous and the volume of $\Omega$ is sufficiently large, or
     \item[(ii)] $\Sigma=\partial\Omega\setminus\partial M$ is homologous to the horizon.
 \end{itemize}
 Then
	\begin{equation}\label{Mintype2}
		\int_{\Sigma}{H}\,d\mu+\int_{\Omega}{\overline{Ric}(\partial_r,\partial_r)}\,dv\geq \xi(|\Sigma|),
	\end{equation}
where $\xi$ is the unique monotonically increasing function that gives equality on radial coordinate spheres, and the equality holds in \eqref{Mintype2} if and only if $\Sigma$ is a radial coordinate sphere.
\end{thm}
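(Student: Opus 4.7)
The plan is to adapt Huisken--Ilmanen's weak inverse mean curvature flow (IMCF) machinery \cite{HI-2001}, following the strategy of Wei \cite{Wei18} used in Theorem C. Let $\{\Sigma_t\}_{t \geq 0}$ be the weak solution of IMCF starting at $\Sigma_0 = \Sigma$, realized as the level sets $\Sigma_t = \{u=t\}$ of the Huisken--Ilmanen proper Lipschitz function $u$, and let $\Omega_t$ be the region enclosed by $\Sigma_t$ (together with $\partial M$ in case (ii)). The centerpiece of the proof will be to show that the quantity
\begin{equation*}
Q(t) \;=\; \int_{\Sigma_t} H\,d\mu + \int_{\Omega_t} \overline{Ric}(\partial_r,\partial_r)\,dv - \xi(|\Sigma_t|)
\end{equation*}
is nonincreasing along the weak flow; since $Q(0)$ equals the left-hand side minus the right-hand side of \eqref{Mintype2}, it then suffices to check $\lim_{t\to\infty} Q(t) \geq 0$.

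Along smooth portions of the flow I would differentiate $Q(t)$ using the standard IMCF evolution equations $\partial_t g_{ij} = 2H^{-1} h_{ij}$, $\partial_t d\mu = d\mu$, and $\partial_t H = -\Delta(H^{-1}) - H^{-1}(|A|^2 + \overline{Ric}(\nu,\nu))$, together with the Schwarzschild identity $\varphi'' = \tfrac{(n-1)m}{2}\varphi^{-n}$ (so that $\overline{Ric}(\partial_r,\partial_r) = -\tfrac{n(n-1)m}{2}\varphi^{-n-1} \leq 0$). Using the coarea formula for the bulk integral together with the Cauchy--Schwarz inequality $|A|^2 \geq H^2/n$ and the structure of the static potential $\varphi'$, I expect $Q'(t) \leq 0$ along a computation parallel to the smooth monotonicity that underlies Theorem \ref{Min-In1}. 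At a jump time, $\Sigma_t$ is replaced by its outward-minimizing hull $\Sigma_t^+$: the area is preserved, the detached portion of $\Sigma_t^+$ is a minimal hypersurface so that $\int_{\Sigma_t^+} H\,d\mu \leq \int_{\Sigma_t} H\,d\mu$ (using that weak mean convexity is preserved under the weak flow), and the newly enclosed region $\Omega_t^+\setminus\Omega_t$ lies in the Schwarzschild exterior where $\overline{Ric}(\partial_r,\partial_r) \leq 0$, so the bulk integral also decreases. All three contributions cooperate to push $Q$ downward across a jump.

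The asymptotic step relies on Huisken--Ilmanen's result that in the Schwarzschild end the level sets of $u$ become asymptotically round large coordinate spheres; by the defining property of $\xi$, $Q$ tends to $0$ on such spheres, giving $\lim_{t\to\infty} Q(t) = 0$. In case (ii), where $\Sigma$ is homologous to the horizon, this analysis applies from $t = 0$ onward. In case (i) the hypothesis that the enclosed volume is sufficiently large, combined with $n<7$, ensures as in \cite{Wei18} that the weak flow swallows the horizon at some finite time, after which case (ii) takes over. For the equality case: if equality holds in \eqref{Mintype2}, then $Q$ is constant, so there are no jumps and the smooth monotonicity forces each $\Sigma_t$ to be a radial coordinate sphere; in particular $\Sigma$ itself must be one.

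The main technical obstacle I foresee is the simultaneous treatment of the boundary functional $\int H$ and the bulk functional $\int \overline{Ric}(\partial_r,\partial_r)$ across jumps. Wei's argument for the classical Minkowski functional $\int fH$ in Theorem C only concerns a boundary quantity, which compares cleanly before and after passing to the outward-minimizing hull; here the extra volume term requires a more delicate coarea decomposition and exploits crucially that the sign condition $\varphi'' \geq 0$ fixes the sign of $\overline{Ric}(\partial_r,\partial_r)$ throughout. Handling the horizon-engulfing step in case (i), and ensuring that the asymptotic value $\lim Q(t) = 0$ survives the nonsmooth structure of the weak flow, are the other principal technical points where care is needed.
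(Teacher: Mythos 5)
Your overall architecture (weak IMCF as in Huisken--Ilmanen, a Gronwall-type monotonicity, an asymptotic lower bound via the blow-down, jump analysis, following Wei) is the same as the paper's, but the proposal is missing the single ingredient that actually closes the monotonicity and the limit: the sharp weighted isoperimetric inequality in the Schwarzschild manifold (the paper's Theorem \ref{thm-isop}, built on Chodosh's isoperimetric theorem). When you differentiate the comparison term $\xi(|\Sigma_t|)$, the ODE \eqref{mon-eq9} produces the term $n\xi_1(|\Sigma_t|)$, i.e.\ the weighted volume of the \emph{radial coordinate ball} of the same area, and to obtain $\frac{d}{dt}\big(\mathcal{W}(\Sigma_t)-\xi(|\Sigma_t|)\big)\leq\frac{n-1}{n}\big(\mathcal{W}(\Sigma_t)-\xi(|\Sigma_t|)\big)$ one must know $\int_{\Omega_t}\frac{\varphi''}{\varphi}\,dv\leq\xi_1(|\Sigma_t|)$, which is exactly \eqref{isop-2}; the sign $\overline{Ric}(\partial_r,\partial_r)\leq 0$ together with Cauchy--Schwarz, which is all your outline invokes, cannot produce this comparison because $\xi$ is not a power function and its derivative reintroduces the bulk term evaluated on the model sphere. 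The same inequality is needed again in the limit step (as in \eqref{eq-Wxigeq}): since $\Sigma_t$ is only asymptotically round, "by the defining property of $\xi$" does not control the bulk term, and one must discard it by the isoperimetric comparison before running the blow-down argument. Relatedly, your monotone quantity is wrong as stated: the computation only gives $Q'\leq\frac{n-1}{n}Q$, so the unnormalized $Q(t)$ need not be nonincreasing (and in general diverges); the paper works with the normalized quantity $\mathcal{G}(t)=|\Sigma_t|^{-\frac{n-1}{n}}\big(\mathcal{W}(\Sigma_t)-\xi(|\Sigma_t|)\big)$ of \eqref{defn-G}, using $|\Sigma_t|=e^{t}|\Sigma|$ (which itself requires the outward minimizing hypothesis), and proves $\lim_{t\to\infty}\mathcal{G}(t)\geq 0$ via the Huisken--Ilmanen gradient estimate and weak (Riesz) convergence of the mean curvature.

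In case (i) you have also misread what the large-volume hypothesis is for. The fill-in of the horizon region and the jump to the strictly minimizing hull of $\Omega_{t_1}\cup W$ happen regardless of the volume; the problem is that before the jump $\Omega_t$ does not contain the horizon, so Chodosh's isoperimetric theorem (stated for regions containing the horizon) is not available, and small isoperimetric regions in Schwarzschild are genuinely not bounded by radial spheres (Brendle--Eichmair). The paper therefore proves a separate statement (Theorem \ref{Thm-isolarge}): isoperimetric regions of sufficiently large volume are bounded by the horizon and a radial coordinate sphere, which yields $\mathrm{Vol}(\Omega_t)\leq\xi_0(|\Sigma_t|-|\partial M|)$ and hence a strict version of the monotonicity inequality on $[0,t_1]$; this is where the large-volume assumption enters, not in "swallowing the horizon." Finally, at the horizon jump the area is \emph{not} preserved ($|\partial F|\geq|\Sigma_{t_1}|$ only), and the comparison of $\int H$, the bulk term, and $\xi(|\partial F|)$ must be done as in \eqref{eq-quan}--\eqref{eq-afterjump}; your equality discussion likewise needs the rigidity case of the isoperimetric inequality (not umbilicity of a smooth flow, which is unavailable in the weak setting) together with the $C^{1,\beta}$ convergence $\Sigma_t\to\Sigma'$ as $t\to 0^{+}$ and the relation between $H_{\Sigma}$ and $H_{\Sigma'}$ to conclude that $\Sigma$ itself is a radial coordinate sphere.
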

\begin{rem}
As we will show in \S\ref{Sec-4} and \S\ref{Sec-5}, the isoperimetric inequality plays an important role in our proof. According to Theorem D (see \S\ref{Sec-3}), the radial coordinate sphere is the unique solution to the isoperimetric problem among hypersurfaces which are homologous to the horizon in the Schwarzschild manifold, while the null homologous case is not discussed. In fact, there exist other isoperimetric regions when the volume is small enough (see \cite{BE13}). Following the same argument as in \cite{BE13}, we prove in \S\ref{Subsec-5.2} that the isoperimetric region is unique if the volume is sufficiently large. Therefore, the assumption that the volume of $\Omega$ is sufficiently large is necessary in the null homologous case.
\end{rem}
In the case of negative curvature, the weak solution of the inverse mean curvature flow with an outward minimizing initial data still exists in an asymptotically hyperbolic manifold. However, the blow-down argument fails at this time, which is very effective in the asymptotically flat case. Recently, Harvie \cite{H-2024} investigated the regularity of the weak solution of the inverse mean curvature flow in the hyperbolic space $\mathbb{H}^{n+1}$ with $2\leq n<7$. He showed that each slice of the flow is star-shaped after a long time, and then smooth. There are also other works concerning the regularity of the weak solution of the inverse mean curvature flow, such as \cites{HI-2008,LW17,Ne10,SZ-21}.
Combining the regularity results in \cites{H-2024} with the existence theory established in \cite{HI-2001},
we can also prove the Minkowski inequality for outward minimizing hypersurfaces in the hyperbolic space $\mathbb{H}^{n+1}$ with $2\leq n<7$.

\begin{thm}\label{Thm-Ads}
Let $\Omega$ be a bounded domain with smooth and outward minimizing boundary in the $(n+1)$-dimensional hyperbolic space $\mathbb{H}^{n+1}$ with $2\leq n<7$, then
\begin{equation}\label{Mintype4}
		\int_{\Sigma}{H}\,d\mu-n\mathrm{Vol}(\Omega)\geq \xi(|\Sigma|),
	\end{equation}
 where $\xi$ is the unique monotonically increasing function that gives equality on geodesic spheres, and the equality holds in \eqref{Mintype4} if and only if $\Sigma$ is a geodesic sphere. 
\end{thm}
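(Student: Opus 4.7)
My plan adapts Wei's strategy for the Schwarzschild case \cite{Wei18} to the hyperbolic setting, combining Huisken-Ilmanen's weak IMCF \cite{HI-2001}, Harvie's recent regularity result \cite{H-2024}, and the smooth Minkowski inequality already established in Theorem \ref{Min-In1}. The scheme is to start the weak IMCF from $\partial\Omega$, propagate a Minkowski-type functional via a monotonicity formula up to a large time $T_0$ at which Harvie's theorem guarantees smoothness and star-shapedness, and then apply Theorem \ref{Min-In1} to close the loop.

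I would begin by invoking the Huisken-Ilmanen existence theorem to obtain a proper weak solution $u:\mathbb{H}^{n+1}\setminus\Omega\to[0,\infty)$ with $\{u=0\}=\partial\Omega$. Since $\partial\Omega$ is outward minimizing, there is no initial jump, every level set $\Sigma_t:=\partial\{u<t\}$ is outward minimizing, and $|\Sigma_t|=|\Sigma_0|e^t$. I would then consider
\[
 Q(t) := \int_{\Sigma_t} H\,d\mu - n\,\mathrm{Vol}(\Omega_t) - \xi(|\Sigma_t|).
\]
The core step is to prove that $Q$ is non-increasing along the weak flow. For smooth slices this is the monotonicity that drives the proof of Theorem \ref{Min-In1}, derived from the IMCF evolution equations, the Newton-MacLaurin inequality $|A|^2\geq H^2/n$, the identity $\overline{Ric}(\partial_r,\partial_r)=-n$ in $\mathbb{H}^{n+1}$, and the calibration of $\xi$ on geodesic spheres. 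To transfer the monotonicity to the weak solution I would employ Huisken-Ilmanen's elliptic $\varepsilon$-regularization together with lower semicontinuity of $\int H\,d\mu$ in the BV category, and handle the jumps of $\Sigma_t$ to its strictly outward-minimizing hull by exploiting the continuity of area and volume across such jumps along with one-sided control on $\int H\,d\mu$, in the spirit of the Hawking mass monotonicity in \cite{HI-2001} and of Wei's analysis in \cite{Wei18}.

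Once the monotonicity of $Q$ is in place, Harvie's theorem provides some $T_0>0$ such that for $t\geq T_0$ the slice $\Sigma_t$ is smooth and star-shaped; being also outward minimizing, hence weakly mean convex, it is eligible for Theorem \ref{Min-In1}, which yields $Q(t)\geq 0$ for $t\geq T_0$. Combined with the monotonicity, $Q(0)\geq Q(t)\geq 0$, which is precisely \eqref{Mintype4}. For the equality case, $Q(0)=0$ forces $Q(t)\equiv 0$; since $\mathbb{H}^{n+1}$ is itself a space form, the equality clause of Theorem \ref{Min-In1} renders every $\Sigma_t$ with $t\geq T_0$ a geodesic sphere, and running the smooth IMCF backward from a family of concentric geodesic spheres, together with the continuity of the weak flow, identifies $\Sigma_0$ as a geodesic sphere as well.

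The principal obstacle I anticipate is verifying the monotonicity of $Q$ through the jump times of the weak flow. Unlike the Hawking mass, whose jump monotonicity is built into the Huisken-Ilmanen construction, $Q$ here involves both the volume of $\Omega_t$ and the nonlinear area term $\xi(|\Sigma_t|)$, and one must show that in the $\varepsilon$-regularization limit any discontinuity of $\int_{\Sigma_t}H\,d\mu$ is compensated by the simultaneous changes of $\mathrm{Vol}(\Omega_t)$ and $\xi(|\Sigma_t|)$, using crucially the strict outward-minimizing property of the hull.
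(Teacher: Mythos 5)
Your proposal follows essentially the same route as the paper: run the weak IMCF of Huisken--Ilmanen from the outward minimizing boundary, propagate the Minkowski functional by the Wei-type weak monotonicity (the paper reuses Lemma \ref{Lem-mohh} and Proposition \ref{prop-g}, which hold verbatim in $\mathbb{H}^{n+1}$), invoke Harvie's regularity to reach a smooth, star-shaped, weakly mean convex slice at large time, and close with the smooth result (the paper applies Proposition \ref{limprop}; applying Theorem \ref{Min-In1} to that slice, as you do, is equivalent). Two cosmetic corrections: the monotone quantity is the normalized $|\Sigma_t|^{-\frac{n-1}{n}}Q(t)$ rather than $Q(t)$ itself (the unnormalized functional only satisfies a Gronwall-type inequality, which still yields $Q(0)\geq 0$), and the equality case is best concluded from the rigidity forced at every time by the weak monotonicity together with the uniqueness in the isoperimetric inequality, rather than by ``running the IMCF backward,'' which is not a well-posed step for the weak flow.
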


The paper is organized as follows: In \S\ref{Sec-2}, we collect some preliminaries on the inverse mean curvature flow (IMCF), including the long time existence result and asymptotic behavior of the smooth solution to the IMCF, as well as the existence, uniqueness, compactness and regularity properties of the weak formulation of the IMCF. In \S\ref{Sec-3}, we use the isoperimetric inequality proved by Chodosh \cite{Cho15} to show an inequality between the weighted volume and the area for a closed hypersurface in a warped cylinder which satisfies Assumption \ref{ass-1}. In \S\ref{Sec-4}, we show that the geometric quantity $\mathcal{G}(t)$ defined in \eqref{defn-G} is monotone decreasing along the IMCF and satisfies $\lim_{t\to\infty}{\mathcal{G}(t)}\geq 0$, and then complete the proof of Theorem \ref{Min-In1} and Corollary \ref{Min-InD}. We also discuss the generalization of Theorem \ref{Min-In1} to Riemannian warped products, which need not be rotationally symmetric. In \S\ref{Sec-5}, we complete the proof of Theorem \ref{Min-In2} and Theorem \ref{Thm-Ads} using the weak solution of the IMCF.
 
\begin{ack}
	The research was supported by National Key R and D Program of China 2021YFA1001800 and 2020YFA0713100, China Postdoctoral Science Foundation No.2022M723057 and No.2024M751605, and Shuimu Tsinghua Scholar Program (No. 2023SM102). The authors would like to thank Professor Yong Wei for his helpful suggestions.
\end{ack}  

\section{The solution of the inverse mean curvature flow}\label{Sec-2}
In this section, we always assume that $M^{n+1}=[a,\infty)\times\mathbb{S}^n(a\geq 0)$ be a warped cylinder equipped with the metric 
\begin{equation}\label{metric-2}
    \bar{g}=dr^2+\varphi^2(r)g_{\mathbb{S}^n},
\end{equation}
where $\varphi'>0,\varphi''\geq 0$ for all $r\in(a,\infty)$. Consider a  family of embedding $X:\Sigma\times[0,T)\to M$ satisfying
\begin{align}\label{flow}
\partial_tX=\frac{1}{H}\nu,
\end{align}
where $H$ is the mean curvature of $\Sigma_t$ and $\nu$ is the unit outward normal. It has been shown in \cites{Scheuer17,Scheuer19} that the smooth solution of the flow \eqref{flow} starting from a strictly mean convex and star-shaped hypersurface remains to be strictly mean convex and star-shaped, and exists for all time $t\in[0,+\infty)$. In general, without some special assumption on the initial hypersurface, the singularities of the flow \eqref{flow} may occur. In \cite{HI-2001}, Huisken and Ilmanen used the level set approach and developed the weak solution of IMCF to overcome this problem.
 
\subsection{The smooth solution of IMCF}
In this subsection, we suppose that the initial hypersurface $\Sigma$ is strictly mean convex and star-shaped. In this setting, each flow hypersurface $\Sigma_t$ can be represented as a graph
\begin{align*}
    \Sigma_t=\{(r(\theta,t),\theta):\theta\in\mathbb{S}^n\},
\end{align*}
where $r(\theta,t)$ is a smooth function on $\mathbb{S}^n\times[0,+\infty)$. We define a function $\phi$ on $\mathbb{S}^n$ by 
\begin{align}\label{def-phi}
    \phi(\theta)=\Phi(r(\theta)),
\end{align}
where $\Phi(r)$ is a positive function satisfying $\Phi'(r)=\frac{1}{\varphi(r)}$. Define 
\begin{align*}
    \upsilon=\sqrt{1+\vert D\phi\vert^2_{\mathbb{S}^n}},
\end{align*}
where $D$ denotes the Levi-Civita connection on $\mathbb{S}^n$. Then the flow \eqref{flow} is equivalent to the following parabolic scalar equation for $r(\theta,t)$:
\begin{align}\label{flow-2}
\frac{\partial r}{\partial t}(\theta,t)=\frac{\upsilon}{H}.
\end{align}

Scheuer \cite{Scheuer17} dealt with the IMCF in warped cylinders of nonpositive radial curvature and generalized the results to Riemannian warped products in \cite{Scheuer19}, which means the base manifold may not be $\mathbb{S}^n$. Compared the asymptotic behaviors of the smooth solution to the IMCF in \cites{Scheuer17} and \cite{Scheuer19}, we find the latter one is more suitable for our use. In the following, we summarize the main results of \cite{Scheuer19} in the setting of warped cylinders.

\begin{thm}[\cite{Scheuer19}]\label{Scheuer-es}
     Let $(M^{n+1},\bar{g})$ be a warped cylinder given in \eqref{metric-2}. Assume that the warped function $\varphi$ satisfies the following assumptions:
     \begin{itemize}
     \item[(i)]
     \begin{equation}\label{SA-1}
          \limsup_{r\to\infty}\frac{\varphi''\varphi}{(\varphi')^2}<\infty\,\,\,\text{and}\,\,\, \limsup_{r\to\infty,\varphi''>0}\frac{\varphi'''\varphi}{\varphi'\varphi''}<\infty.
     \end{equation}
     \item[(ii)] In case that $\sup_{r>0}{\varphi'(r)}=\infty$, we assume 
     \begin{equation}\label{SA-2}
         \liminf_{r\to\infty}\frac{\varphi''\varphi}{(\varphi')^2}>0.
     \end{equation}
     \end{itemize}
    Let $\Sigma$ be a strictly mean convex and star-shaped hypersurface. Then the IMCF \eqref{flow} starting from $\Sigma$ has a unique smooth solution $\Sigma_t$ for all time $t\in[0,\infty)$. $\Sigma_t$ remains to be strictly mean convex, star-shaped and becomes umbilical with the rate 
     \begin{equation}\label{S-es}
         \left\vert{h_j^i-\frac{\varphi'}{\varphi}\delta_j^i}\right\vert\leq \frac{ct}{\varphi\varphi'},
     \end{equation}
     where the $t$-factor can be replaced by $e^{-\alpha t}$ for some positive $\alpha$ if $\varphi'$ is bounded.
\end{thm}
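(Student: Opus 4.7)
The plan is to rewrite the flow as a scalar parabolic equation for the graph function $r(\theta,t)$ and build the argument in three stages: short-time existence, global a priori estimates up to $C^{2,\alpha}$ (which yield long-time existence by standard bootstrapping), and finally the sharp umbilical decay \eqref{S-es} extracted from the evolution of the traceless second fundamental form. Since $\Sigma_0$ is star-shaped, one can write $\Sigma_t$ as the graph of $r(\cdot,t)$ over $\mathbb{S}^n$. Introducing $\phi=\Phi\circ r$ as in \eqref{def-phi}, a direct computation transforms \eqref{flow} into the quasilinear scalar equation \eqref{flow-2}, which is uniformly parabolic so long as $H>0$. Short-time existence then follows from standard parabolic theory.

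For the a priori estimates I would use radial coordinate spheres as barriers: a sphere of radius $\rho(t)$ evolves under IMCF by $\rho'(t)=\varphi(\rho)/(n\varphi'(\rho))$, and the comparison principle applied to \eqref{flow-2} pins $r_{\min}(t)\le r(\theta,t)\le r_{\max}(t)$ between two such solutions, giving $C^0$ control and the correct growth rate of $r$ in $t$. Star-shapedness is preserved by deriving an evolution inequality for the support function $u=\bar g(\varphi\partial_r,\nu)$, which under $\varphi''\ge 0$ gives a lower bound $u\ge c\varphi$ via the maximum principle; equivalently, one obtains a $C^1$ bound $v=\sqrt{1+|D\phi|^2_{\mathbb{S}^n}}\le C$. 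The $C^2$ bound is the crucial point: differentiating the flow equation yields the evolution of $h^i_j$, and combining an upper bound on $H$ (derived from the evolution of $H$ together with $v\le C$) with a lower bound coming from star-shapedness and the $C^0$ estimate, one gets principal curvatures bounded uniformly in terms of $\varphi'/\varphi$. Krylov--Safonov and Schauder estimates then upgrade to $C^{\infty}$, so long-time existence holds.

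The heart of the argument, and the main obstacle, is the umbilical decay \eqref{S-es}. Set $M^i_j=h^i_j-(\varphi'/\varphi)\delta^i_j$ and compute $\partial_t M^i_j$ along \eqref{flow}: the resulting evolution splits into a degenerate-elliptic principal part of Jacobi type, plus zeroth-order terms involving the radial curvatures $\varphi''/\varphi$ and their derivatives, and cross terms built from $M^k_l$ itself. Here one uses Assumption (i)--(ii) precisely to control these zeroth-order contributions: the first bound in \eqref{SA-1} makes $\varphi''\varphi/(\varphi')^2$ a bounded coefficient, the second bound in \eqref{SA-1} controls how fast the ratio varies along the flow, and \eqref{SA-2} provides the coercivity that drives the dissipation when $\varphi'$ is unbounded, while Assumption (iii) from \ref{ass-1} (namely $\partial_r(\varphi''/\varphi)\le 0$) supplies the correct sign for the terms involving the ambient curvature derivative. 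Applying the maximum principle to a suitable rescaling of $|M^i_j|$ and using the $C^0$ estimate $\varphi(r(\cdot,t))\varphi'(r(\cdot,t))\to\infty$ at the expected rate yields the quotient $ct/(\varphi\varphi')$; if $\varphi'$ is bounded one has linear growth of $\varphi$ in $t$, so the rate improves to the exponential $e^{-\alpha t}$. The delicate part is balancing these zeroth-order terms against the parabolic dissipation with the sharp constant dictated by the asymptotics of $\varphi$; the assumptions \eqref{SA-1}--\eqref{SA-2} are tailored exactly to make this balance close.
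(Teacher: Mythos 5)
This theorem is not proved in the paper at all: it is quoted verbatim (in the warped-cylinder setting) from Scheuer's work \cite{Scheuer19}, so the ``paper's proof'' is simply the citation. Your outline does follow the broad strategy of Scheuer's original argument --- graph representation via $\phi=\Phi\circ r$, radial spheres as barriers for the $C^0$ estimate, preservation of star-shapedness through the support function $u=\bar g(\varphi\partial_r,\nu)$, curvature bounds, parabolic bootstrapping for long-time existence, and a maximum-principle argument for the oscillation decay of $h^i_j-\tfrac{\varphi'}{\varphi}\delta^i_j$ --- so the skeleton is the right one.

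However, as a proof it has two genuine gaps. First, you invoke Assumption \ref{ass-1}(iii), $\partial_r(\varphi''/\varphi)\le 0$, to ``supply the correct sign'' in the zeroth-order terms. That hypothesis is not available here: the theorem assumes only $\varphi'>0$, $\varphi''\ge 0$ and the asymptotic conditions \eqref{SA-1}--\eqref{SA-2}, and it is applied in the paper precisely because it holds under these weaker hypotheses (Lemma \ref{Lem-4.1} then verifies \eqref{SA-1}--\eqref{SA-2} from Assumption \ref{ass-1}). In Scheuer's argument the control of the radial derivative of the ambient curvature comes from the second limsup in \eqref{SA-1}, i.e.\ the bound on $\varphi'''\varphi/(\varphi'\varphi'')$, not from a monotonicity/sign condition; a proof that needs $\partial_r(\varphi''/\varphi)\le 0$ proves a weaker theorem than the one stated. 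Second, the decisive step --- the quantitative decay \eqref{S-es} with the rate $ct/(\varphi\varphi')$ and its upgrade to $e^{-\alpha t}$ when $\varphi'$ is bounded --- is asserted rather than derived: you never write the evolution equation of $M^i_j=h^i_j-\tfrac{\varphi'}{\varphi}\delta^i_j$ (or of a scalar substitute such as a suitably weighted $|A|^2-H^2/n$), nor exhibit the rescaled test quantity to which the maximum principle is applied, nor explain where the factor $\varphi\varphi'$ in the denominator comes from, nor how \eqref{SA-2} enters when $\varphi'$ is unbounded. Since exactly this balancing is the substance of the theorem (it occupies the technical core of \cite{Scheuer19}), the proposal as written is an outline of a plausible strategy rather than a proof.
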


\subsection{The weak solution of IMCF}\label{sub-2.2} In this subsection, we recall the definition and some properties of the weak solution of the inverse mean curvature flow, which was firstly developed by Huisken and Ilmanen \cite{HI-2001} in general complete Riemannian manifolds with only a mild assumption at the infinity (e.g. asymptotically conic), and can be applied to the Schwarzschild manifold or the hyperbolic space directly. 

Assume that the flow \eqref{flow} is given by the level sets of a function $u:M\to\mathbb{R}$ via 
\begin{equation*}
\Omega_t:=\{x:u(x)<t\},\qquad \Sigma_t:=\partial {\Omega}_t.
\end{equation*}
Whenever $u$ is smooth with $\nabla u\neq 0$, equation \eqref{flow} is equivalent to 
\begin{equation}\label{eq-div}
\mathrm{div}_{M}\left(\frac{\nabla u}{|\nabla u|}\right)=|\nabla u|,
\end{equation}
where the left hand side gives the mean curvature of $\{u=t\}$ and the right hand side gives the inverse speed. 

Freezing the $|\nabla u|$ term on the right hand side, consider equation \eqref{eq-div} as the Euler-Lagrange equation of the functional
\begin{equation}
    J_u(v)=J_u^K(v):=\int_{K}{|\nabla v|+v|\nabla u|}\,dx.
\end{equation}

Let $\Omega$ be an open set with a boundary that is at least $C^1$. We say that $u$ is a weak solution of \eqref{eq-div} with initial condition $\Omega$ if

(1) $u\in C^{0,1}_{loc}(M)$ and $\Omega=\{u<0\}$;

(2) For every locally Lipschitz function $v$ such that $\{v\neq u\}\subset\subset\Omega$, there holds
    \begin{equation*}
     J_u^K(u)\leq J_u^K(v),
    \end{equation*}
    where the integration is performed over any compact set $K$ containing $\{u\ne v\}$.

Next, we introduce the notion of minimizing hulls. Let $A$ be an open set. We call $E$ is a minimizing hull (in $A$), if $E$ minimizes area on the outside in $A$, that is, if
\begin{equation*}
|\partial^{*}E\cap K|\leq |\partial^{*}F\cap K|
\end{equation*}
for any $F$ containing $E$ such that $F\setminus E\subset\subset A$, and any compact set $K$ containing $F\setminus E$. Here $\partial^{*}F$ denotes the reduced boundary of a set $F$ of locally finite perimeter. We say that $E$ is a strictly minimizing hull (in $A$) if equality holds implies that $F\cap A=E\cap A$ a.e.. The intersection of a countable collection (strictly) minimizing hulls is a (strictly) minimizing hull. Now let $E$ be any measurable set. Define $E'=E'_{A}$ to be the intersection of (the Lebesgue points of) all the strictly minimizing hulls that contain $E$. Working modulo sets of measure zero, this may be realized by a countable intersection, so $E'$ itself is a strictly minimizing hull, and open. We call $E'$ the strictly minimizing hull of $E$ (in $A$). Note that $E''=E'$.

We summarize the existence, uniqueness, compactness and regularity properties of the weak solution in the following theorem for later use.
\begin{thm}{\cite{HI-2001}}\label{Prop-Weak}
Let $\Omega$ be a bounded domain with smooth boundary in the Schwarzschild manifold $M^{n+1}$ or the hyperbolic space $\mathbb{H}^{n+1}$ with $n<7$. In the Schwarzschild case, we denote $\Sigma=\partial\Omega\setminus\partial M$ and if $\Sigma$ is null homologous, we fill-in the region $W$ bounded by the horizon $\partial M$ as in \cite[\S 6]{HI-2001}. In the hyperbolic case, we denote $\Sigma=\partial\Omega$. Then there exists a proper, locally Lipschitz solution $u$ of \eqref{eq-div} with initial condition $\Omega$ such that:\\
(1) For $t\geq 0$, $\Omega_{t}'=\mathrm{int}\{u\leq t\}$. Moreover, $\Omega_t=\{u<t\}$ is a minimizing hull in $M$ for $t>0$.  \\
(2) If we define $\Sigma_t:=\partial\Omega_t$ and $\Sigma_t':=\partial\Omega_{t}'=\partial\{u>t\}$, then $\Sigma_t$ and $\Sigma_t'$ both define increasing families of $C^{1,\alpha}$ hypersurfaces in $M$ which possess locally uniform $C^{1,\alpha}$ estimates depending only on the local Lipschitz bounds for $u$.\\
(3)For all $t>0$, we have 
\begin{equation}\label{C-1beta-con}
{\Sigma}_s\to {\Sigma}_t \quad \text{as}\,\, s \nearrow t,\qquad {\Sigma}_s\to {\Sigma}_t' \quad \text{as}\,\, s \searrow t
\end{equation}
in the sense of local $C^{1,\beta}$ convergence, $0<\beta<\alpha$. The second convergence also holds for $t=0$.\\
(4) The weak mean curvature $H$ satisfies $H=|\nabla u|$ for $a.e. t>0$ and $a.e. x\in\Sigma_t$.\\
(5) For all $t>0$, we have $|\Sigma_t|=e^t|\Sigma'|$, and $|\Sigma_t|=e^t|\Sigma|$ if $\Omega$ is a minimizing hull. In particular, if $\Sigma$ is outward minimizing, then the second equality holds.

When $n\geq 7$, the regularity and the convergence results remain true away from a closed singular set $Z$ of dimension at most $n-7$ and disjoint from $\bar{\Omega}$.  
\end{thm}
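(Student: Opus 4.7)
The result is a specialization of Huisken--Ilmanen's general weak IMCF theory to two specific ambient spaces, so the plan is to verify that their elliptic regularization construction goes through in these settings and then read off (1)--(5) from the general framework of \cite{HI-2001}. First I would set up the regularized problem: on a large exhaustion $\Omega \subset E_L := \{r < R_L\}$, find $u^{\epsilon}_L \in C^{2,\alpha}$ solving
\begin{equation*}
\mathrm{div}\!\left(\frac{\nabla u}{\sqrt{|\nabla u|^2+\epsilon^2}}\right) = \sqrt{|\nabla u|^2+\epsilon^2}
\end{equation*}
on $E_L \setminus \bar{\Omega}$, with Dirichlet data $u=0$ on $\partial\Omega$ and $u=L-2$ on $\{r=R_L\}$; then translate $U^{\epsilon}_L := u^{\epsilon}_L - \epsilon(r - R_L)$ and pass to a diagonal limit $\epsilon \to 0$, $L \to \infty$ to obtain a proper locally Lipschitz $u$ on $M \setminus \Omega$ (extended by $-\infty$ on $\Omega$). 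For the null homologous Schwarzschild case the horizon $\partial M$ is replaced by the filled-in region $W$ exactly as in \cite[\S 6]{HI-2001}, so that the initial data is a compact hypersurface in a manifold without boundary.

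The technical heart is a uniform local Lipschitz bound on $u^{\epsilon}_L$ independent of $\epsilon, L$. For this I would use the radial coordinate spheres as barriers: in both Schwarzschild and hyperbolic geometry the spheres $\{r = \rho\}$ have mean curvature $n\varphi'/\varphi$ that is positive, monotone and under explicit control, so a radial function $\psi(r)$ with $\psi''$ chosen to match $|\psi'|$ against this barrier mean curvature gives sub- and supersolutions for the regularized equation. Near $\partial\Omega$ one uses the smooth mean-convex (or at worst $C^1$) initial hypersurface together with standard elliptic boundary barriers; at infinity one uses the large spheres, whose mean curvature asymptotics ensure $u^{\epsilon}_L$ grows at most like $\log r$ (hyperbolic) respectively like a power of $r$ (Schwarzschild), hence that $u$ is proper. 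Arzel\`a--Ascoli then produces the limit $u$, and the limit satisfies the variational characterization $J_u(u)\leq J_u(v)$ by the standard argument converting the regularized PDE into its frozen functional form.

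Given $u$, each of (1)--(5) is extracted from its variational definition together with HI's compactness for almost-minimizing boundaries. The minimizing hull property (1) is equivalent to the inequality $J_u(u)\leq J_u(v)$ restricted to competitors of the form $\chi_F$, and the identification $\Omega_t' = \mathrm{int}\{u \leq t\}$ is then a direct consequence of the definition of the strictly minimizing hull; property (2) follows from $C^{1,\alpha}$ regularity of almost-minimizing boundaries in dimension $n < 7$, applied to $\Sigma_t$ (this is where the dimensional restriction enters); property (3) records that $\Sigma_s$ converges from below to $\Sigma_t$ and from above to $\Sigma_t'$, reflecting the "jump" phenomenon where $u$ is locally constant on a minimizing-hull skin; property (4) is the a.e.\ pointwise form of the Euler--Lagrange equation, inherited from $u^\epsilon$ on the smooth part of each level set; and property (5), the exponential area growth $|\Sigma_t|=e^t|\Sigma'|$, is obtained by combining the minimizing hull identity $|\Sigma_t| \leq |\partial^* F|$ for competitors $F \supset \Omega_t$ with the coarea formula $\int e^{-t}|\nabla u|\,d\mu_t = \frac{d}{dt}|\Sigma_t|$ derived from the weak flow equation.

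The main obstacle will be the barrier construction at infinity in the asymptotically hyperbolic setting, since the IMCF expands exponentially and one must calibrate $\psi$ so that the regularized solution stays below the "correct" rate of escape to infinity; a secondary difficulty is confirming that the fill-in in the null homologous Schwarzschild case preserves the minimizing hull property across $\partial M$, which is handled by arranging the filled metric so that no closed minimal surface is present in $W$. For the $n\geq 7$ extension, standard dimension reduction for almost-minimizing boundaries (Federer's theorem) yields a closed singular set $Z$ of Hausdorff dimension at most $n-7$; that $Z \cap \bar{\Omega} = \emptyset$ follows because $\partial\Omega$ is smooth, hence locally acts as a smooth barrier ruling out singular tangent cones on a neighborhood.
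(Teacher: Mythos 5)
The paper offers no proof of this statement at all --- it is quoted directly from Huisken--Ilmanen \cite{HI-2001} --- and your outline is a faithful reconstruction of their elliptic-regularization argument (regularized equation on an exhaustion, coordinate-sphere barriers, diagonal limit, the variational characterization $J_u(u)\leq J_u(v)$, almost-minimizer regularity for $n<7$, and dimension reduction for $n\geq 7$), so it matches the intended source in both structure and substance. One slip worth correcting: you state the asymptotic growth rates backwards --- the weak solution grows like $n\log r$ in the asymptotically flat Schwarzschild end (since $|S(r)|\sim\omega_n r^n$ and $|\Sigma_t|=e^t|\Sigma|$) and linearly in $r$ in the hyperbolic end (since $|S(r)|\sim\omega_n\sinh^n r$ forces $t\approx nr$) --- though this does not derail the method, because the correct rates emerge from exactly the calibration of $\psi$ against the coordinate-sphere mean curvature that you describe.
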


\section{The isoperimetric inequality in warped cylinders}\label{Sec-3}
In this section, we mainly deal with the (weighted) isoperimetric inequality in warped cylinders, we firstly introduce the notion of isoperimetric region.
\begin{defn}
    Consider a warped cylinder $M=[a,\infty)\times\mathbb{S}^n$ with the metric $\bar{g}=dr^2+\varphi^2(r)g_{\mathbb{S}^n}$. Assume that $\Omega$ is a Borel set and contains the horizon. We say $\Omega$ is isoperimetric among sets containing the horizon if
    \begin{equation*}
        |\partial^{*}\Omega'|\geq |\partial^{*}\Omega|
    \end{equation*}
    for all Borel sets $\Omega'$ containing the horizon with $\mathrm{Vol}(\Omega')=\mathrm{Vol}(\Omega)$. Here $\partial^{*}\Omega$ denotes the reduced boundary of $\Omega$. Moreover, if equality holds only when $\Omega'=\Omega$ away from a set of measure zero, we say that $\Omega$ is uniquely isoperimetric among sets containing the horizon.
\end{defn}
\begin{rem}\label{rem-reduce}
    It's known that (see \cite[Remark 15.1-Remark 15.3]{Maggi-12}) for a set of locally finite perimeter $E$, up to modification on sets of measure zero, the reduced boundary $\partial^{*}E$ and the topological boundary $\partial E$ satisfy $\overline{\partial^{*}E}=\partial E$. In particular, if $E$ is an open set with $C^1$ boundary, then $\partial^{*}E=\partial E$. For another set $F$ of locally finite perimeter, if $\mathrm{Vol}(E\triangle F)=0$, then $\partial^{*}E=\partial^{*}F$.
\end{rem}
Bray \cite{Bray97} developed an isoperimetric comparison theorem to prove that radial coordinate 2-balls of the 3-dimensional Schwarzschild manifold are isoperimetric in their homology, and then Bray and Morgan \cite{BM2002} used the method to solve the isoperimetric problem in more general spherically symmetric manifolds. Later, Corvino, Gerek, Greenberg and Krummel \cite{CGGK0722} adapted Bray's techniques to show that radial coordinate balls in AdS-Schwarzschild are isoperimetric. In \cite{Cho15}, applying a similar manner, Chodosh proved the following theorem for warped cylinders:

\begin{thmD}{\cite[\S 3.2]{Cho15}}\label{Ch-isop}
	Consider a warped cylinder $M=[a,\infty)\times\mathbb{S}^n$ with the metric $dr^2+\varphi^2(r)g_{\mathbb{S}^n}$. Fixing $\kappa\geq0$, we suppose that 
	\begin{itemize}
		 \item[(i)] $\varphi''(r)\geq\kappa \varphi(r)$,
        \item[(ii)] $0\leq\varphi'(r)\leq \sqrt{1+\kappa\varphi^2(r)}$.
	\end{itemize}
 Then for all $r\geq a$, the radial coordinate balls $B(r)=[a,r)\times\mathbb{S}^n$ are isoperimetric among sets containing the horizon, uniquely if $\varphi'(r)< \sqrt{1+\kappa\varphi^2(r)}$.
\end{thmD}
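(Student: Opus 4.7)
My plan is to combine spherical (Schwarz) symmetrization along the $\mathbb{S}^n$-factor with a one-dimensional comparison, in the spirit of Bray--Morgan. I would first establish existence of isoperimetric regions among those containing the horizon, via the direct method: lower semicontinuity of perimeter plus the fact that radial coordinate balls furnish explicit competitors preventing loss of mass at infinity. Standard geometric measure theory then gives a smooth constant mean curvature boundary away from a singular set of Hausdorff dimension at most $n-7$, so I may analyze smooth competitors.

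\textbf{Spherical symmetrization.} Given any Borel set $\Omega$ containing $\partial M$ with finite perimeter, I would define its spherical symmetrization $\Omega^{*}$ slice by slice: for each $r \in [a,\infty)$, replace $\Omega \cap (\{r\} \times \mathbb{S}^n)$ by a geodesic cap of the same $n$-dimensional area with respect to $\varphi^{2}(r)\, g_{\mathbb{S}^n}$, centered at a fixed pole. The volume is preserved, and a coarea-plus-slicing computation together with the classical isoperimetric inequality on $(\mathbb{S}^n, g_{\mathbb{S}^n})$ yields $|\partial^{*}\Omega^{*}| \leq |\partial^{*}\Omega|$. This reduces the problem to rotationally symmetric $\Omega$ without loss of generality.

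\textbf{One-dimensional comparison.} A rotationally symmetric set containing the horizon is determined by a profile $\rho : [a, \infty) \to [0, \pi]$ giving the angular radius of the cap at each $r$. The volume and perimeter then become explicit integrals in $\rho$ and $\rho'$, and the constraint $\mathrm{Vol}(\Omega) = \mathrm{Vol}(B(r_0))$ becomes a single integral equation. The claim that the minimizer is $\rho \equiv \pi$ on $[a, r_0]$ and $\rho \equiv 0$ beyond reduces to a calibration inequality: using $\varphi'' \geq \kappa\varphi$ and $\varphi' \leq \sqrt{1+\kappa\varphi^{2}}$, I would construct an auxiliary multiplier (morally $\varphi^n$ times a function of $\varphi'$) whose integration against any competing profile produces the desired lower bound on perimeter. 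Uniqueness under $\varphi' < \sqrt{1+\kappa\varphi^{2}}$ then follows from strictness of this calibration away from the radial ball configuration.

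\textbf{Main obstacle.} The hardest part will be the calibration in the one-dimensional step: identifying the right multiplier that makes the variational inequality sharp precisely on the radial ball, and combining both pointwise assumptions on $\varphi$ in a way that is simultaneously tight on coordinate spheres. The two conditions $\varphi'' \geq \kappa\varphi$ (controlling convexity of the warping profile) and $\varphi' \leq \sqrt{1+\kappa\varphi^{2}}$ (controlling the mean curvature of the radial foliation) will enter the calibration through different terms, and the essential structural insight is that they must reinforce each other to produce a single sharp differential inequality along the profile.
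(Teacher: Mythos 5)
This statement is not proved in the paper at all: Theorem D is quoted from Chodosh's thesis \cite{Cho15} (which adapts the Bray and Bray--Morgan isoperimetric comparison technique, as in \cite{Bray97}, \cite{BM2002}, \cite{CGGK0722}), and the authors only use it as a black box in Theorem \ref{thm-isop}. So your attempt has to be judged on its own merits, and as it stands it is a strategy outline with the decisive step missing. The spherical (cap) symmetrization reduction is plausible -- the warped metric $dr^2+\varphi^2(r)g_{\mathbb{S}^n}$ is $O(n+1)$-invariant, so a polarization argument should give $|\partial^*\Omega^*|\le|\partial^*\Omega|$, and full slices stay full so the horizon constraint is preserved -- but after that reduction the entire content of the theorem is the one-dimensional comparison, and there you only assert that some calibration ``morally $\varphi^n$ times a function of $\varphi'$'' exists. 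You never exhibit the multiplier, never write the perimeter and volume functionals of a cap profile $\rho(r)$, and never show how hypotheses (i) $\varphi''\ge\kappa\varphi$ and (ii) $\varphi'\le\sqrt{1+\kappa\varphi^2}$ enter to force the full-slice profile to minimize. Since the theorem is false without these hypotheses, a proof must make that mechanism explicit; ``I would construct an auxiliary multiplier'' is exactly the point at which the proof has to be done, so there is a genuine gap. The uniqueness claim under the strict inequality $\varphi'<\sqrt{1+\kappa\varphi^2}$ is likewise unaddressed: symmetrization arguments are notoriously lossy in equality cases, and you would additionally have to rule out symmetric non-ball profiles achieving equality.

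Two further cautions. First, your existence step is glossed: in the noncompact cylinder the direct method can lose volume to infinity, and merely having coordinate balls as competitors does not prevent this; either you need a genuine confinement/generalized-existence argument, or better, a comparison-theorem formulation (as in Bray--Morgan and Chodosh) that bounds $|\partial^*\Omega|$ from below for \emph{every} competitor and thereby avoids existence and regularity altogether. Second, it is worth knowing how the cited proofs actually use (i) and (ii): condition (ii) says the coordinate spheres have mean curvature $n\varphi'/\varphi$ no larger than that of a geodesic sphere of the same radius function in the model space form of curvature $-\kappa$, and condition (i) controls the comparison of volumes; the argument is then a direct comparison with the space form, where balls are isoperimetric, rather than a symmetrization-plus-ODE analysis. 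If you want to salvage your route, the honest next step is to write the profile functionals explicitly and derive the differential inequality from (i)--(ii); until that is done the proposal does not establish the theorem.
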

\begin{rem}
    In his thesis \cite{Cho15}, Chodosh solved the isoperimetric problem in more general warped products. For the sake of simplicity, we reduce his results to warped cylinders in Theorem D.
\end{rem}

In order to establish the Minkowski inequality in the warped cylinders which are asymptotically flat or hyperbolic, we need the following weighted isoperimetric inequality, which is a direct consequence of the classical isoperimetric inequality.
\begin{thm}\label{thm-isop}
    Assume that $(M^{n+1},\bar{g})$ is a warped cylinder satisfying the same conditions as in Theorem D. Let $\eta(r)$ be a nonincreasing,  positive function, and $\Sigma\subset M$ be a closed, $C^1$ hypersurface which is homologous to the horizon $\partial M$, and $\Omega$ be the region bounded by $\Sigma$ and $\partial M$, 
    then 
    \begin{align}\label{isop-1}
        \int_{\Omega}\eta(r)dv\leq \xi_{\eta}(\vert\Sigma\vert),
    \end{align}
    where $\xi_{\eta}$ is the unique monotonically increasing function that gives equality on radial coordinate spheres.
    
    In particular, if $(M^{n+1},\bar{g})$ satisfies Assumption \ref{ass-1}, we have  
    \begin{align}\label{isop-2}
        \int_{\Omega}\frac{\varphi''}{\varphi}(r)dv\leq \xi_{1}(\vert\Sigma\vert),
    \end{align}
    where $\xi_{1}$ is the unique associated function. Moreover, if either the second inequality in Assumption \ref{ass-1} (ii) or the inequality in Assumption \ref{ass-1} (iii) is strict, then the equality holds in \eqref{isop-2} if and only if $\Sigma$ is a radial coordinate sphere.
\end{thm}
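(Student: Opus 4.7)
The plan is to combine the unweighted isoperimetric inequality of Theorem D with a monotone rearrangement argument driven by the monotonicity of $\eta$. First I would set up the comparison function: for $A\geq |\partial M|$, let $r_A$ be the unique radius with $|\partial B(r_A)|=n\omega_n\varphi(r_A)^n=A$ (well-defined and strictly increasing in $r$ since $\varphi'>0$), and define $\xi_\eta(A):=\int_{B(r_A)}\eta(r)\,dv$. Strict monotonicity of $\xi_\eta$ follows from $\eta>0$, and equality on radial coordinate spheres holds by construction.

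Set $A=|\Sigma|$ and $B^*:=B(r_A)$. The first step is to deduce $\mathrm{Vol}(\Omega)\leq\mathrm{Vol}(B^*)$ from Theorem D: if instead $\mathrm{Vol}(\Omega)>\mathrm{Vol}(B^*)$, let $B_V$ be the radial ball with $\mathrm{Vol}(B_V)=\mathrm{Vol}(\Omega)$; then $B^*\subsetneq B_V$, so $|\partial B_V|>A$, while Theorem D applied to $\Omega$ (which contains the horizon and has the same volume as $B_V$) yields $|\Sigma|\geq|\partial B_V|>A$, contradicting $|\Sigma|=A$. So $\mathrm{Vol}(\Omega)\leq\mathrm{Vol}(B^*)$, and we now let $B_V$ be the coordinate ball with $\mathrm{Vol}(B_V)=\mathrm{Vol}(\Omega)$, so $B_V\subseteq B^*$.

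Next I would run the rearrangement step. Writing
\begin{equation*}
\int_{B_V}\eta\,dv-\int_\Omega\eta\,dv=\int_{B_V\setminus\Omega}\eta\,dv-\int_{\Omega\setminus B_V}\eta\,dv,
\end{equation*}
note that $r\leq r_V$ on $B_V\setminus\Omega$ and $r\geq r_V$ on $\Omega\setminus B_V$, so by monotonicity of $\eta$,
\begin{equation*}
\int_{B_V\setminus\Omega}\eta\,dv\geq \eta(r_V)\,\mathrm{Vol}(B_V\setminus\Omega)=\eta(r_V)\,\mathrm{Vol}(\Omega\setminus B_V)\geq \int_{\Omega\setminus B_V}\eta\,dv,
\end{equation*}
where the middle equality uses $\mathrm{Vol}(B_V)=\mathrm{Vol}(\Omega)$. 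Combined with $\int_{B_V}\eta\leq\int_{B^*}\eta$ (since $B_V\subseteq B^*$ and $\eta>0$), this yields \eqref{isop-1}. The specialization to $\eta=\varphi''/\varphi$ is immediate: Assumption \ref{ass-1}(i) gives positivity and (iii) gives monotonicity, so \eqref{isop-2} follows.

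For the equality characterization in \eqref{isop-2}, I would argue by cases. If Assumption \ref{ass-1}(iii) is strict, then $\eta=\varphi''/\varphi$ is strictly decreasing, so equality in the rearrangement step forces $\mathrm{Vol}(B_V\triangle\Omega)=0$, and then $|\Sigma|=|\partial B_V|$ identifies $\Sigma$ with a radial coordinate sphere. If instead Assumption \ref{ass-1}(ii) is strict, Theorem D provides uniqueness of the isoperimetric region; equality $\int_{B_V}\eta=\int_{B^*}\eta$ with $\eta>0$ forces $B_V=B^*$, so $\mathrm{Vol}(\Omega)=\mathrm{Vol}(B^*)$ with $|\Sigma|=|\partial B^*|$, making $\Omega$ an isoperimetric minimizer, and uniqueness gives $\Omega=B^*$. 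The main difficulty is essentially bookkeeping rather than analytical: one must coordinate the area-for-fixed-volume content of Theorem D with the monotonicity rearrangement in such a way that the simple identity $\mathrm{Vol}(B_V\setminus\Omega)=\mathrm{Vol}(\Omega\setminus B_V)$ does the work, and must separate the two strict-inequality alternatives carefully in the equality analysis.
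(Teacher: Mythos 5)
Your proposal is correct and follows essentially the same route as the paper: the unweighted isoperimetric inequality from Theorem D to compare $\mathrm{Vol}(\Omega)$ with the coordinate ball of boundary area $|\Sigma|$, the equal-volume rearrangement using the monotonicity of $\eta$, and the same two-case equality analysis (strict monotonicity of $\varphi''/\varphi$ versus uniqueness in Theorem D, finishing via the $C^1$-boundary/reduced-boundary remark). Only cosmetic slips: the coordinate sphere has area $\omega_n\varphi(r)^n$ (not $n\omega_n\varphi(r)^n$), and in the equality case the identification of $\Sigma$ with a coordinate sphere should be drawn from $\mathrm{Vol}(\Omega\triangle B_V)=0$ together with Remark \ref{rem-reduce} rather than from the area equality $|\Sigma|=|\partial B_V|$ alone.
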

\begin{proof}
Since $\Sigma$ is $C^1$, then by Theorem D and Remark \ref{rem-reduce}, we know that the sharp isoperimetric inequality holds in $M$, that is 
\begin{align}\label{isop2-eq1}
    \text{Vol}(\Omega)\leq \xi_{0}(\vert\Sigma\vert),
\end{align}
where $\xi_0$ is the unique associated function that gives equality on radial coordinate spheres. Next, suppose that $B(r)$ is the domain bounded by $\partial M$ and the radial coordinate sphere $\{r\}\times\mathbb{S}^n$ such that $\text{Vol}(B(r))=\text{Vol}(\Omega$). Since $\eta$ is nonincreasing with respect to $r$, we have 
\begin{align}\label{isop2-eq3}
\int_{\Omega}\eta dv&=\int_{\Omega\cap B(r)}\eta dv+\int_{\Omega\setminus B(r)}\eta dv\nonumber\\
&\leq\int_{\Omega\cap B(r)}\eta dv+\int_{B(r)\setminus \Omega}\eta dv\\
&=\int_{B(r)}\eta dv,\nonumber
\end{align}
where in the inequality we used the fact that $\text{Vol}(\Omega\setminus B(r))=\text{Vol}(B(r)\setminus\Omega)$. It follows that the radial coordinate sphere $\{r\}\times \mathbb{S}^n$ maximizes the weighted volume among hypersurfaces enclosing fixed volume with the horizon $\partial M$, which means there exists an increasing, associated function $\widehat{\xi}_{\eta}$ such that 
\begin{align}\label{isop2-eq2}
\int_{\Omega}\eta dv\leq\widehat{\xi}_{\eta}(\text{Vol}(\Omega)),
\end{align}
where the equality holds if $\Sigma$ is a radial coordinate sphere.
Combining \eqref{isop2-eq1} with \eqref{isop2-eq2} gives \eqref{isop-1}. In particular, if $(M^{n+1},\bar{g})$ satisfies Assumption \ref{ass-1}, by the condition (iii), $\frac{\varphi''}{\varphi}$ is a nonincreasing function with respect to $r$, then the inequality \eqref{isop-2} follows from \eqref{isop-1} immediately.

Now, we deal with the equality case. If the second inequality in Assumption \ref{ass-1} (ii) is strict, then  the uniqueness in the isoperimetric Theorem D implies that the equality holds in \eqref{isop-2} if and only if $\Omega$ is a radial coordinate ball away from a set of measure zero. Since $\Omega$ is $C^1$, then by Remark \ref{rem-reduce}, $\Omega$ is precisely a radial coordinate ball and hence $\Sigma$ is a radial coordinate sphere. On the other hand, if the inequality in Assumption \ref{ass-1} (iii) is strict, then the function $\frac{\varphi''}{\varphi}$ is strictly decreasing. Hence, the inequality \eqref{isop2-eq3} is strict when $\Omega\neq B(r)$, this gives the uniqueness of the weighted isoperimetric inequality \eqref{isop-2} and completes the proof.
\end{proof}
\section{Proof of Theorem \ref{Min-In1} and Corollary \ref{Min-InD}}\label{Sec-4}
In this section, we will apply properties of the smooth solution of IMCF to prove the Minkowski inequality in the warped cylinder $M^{n+1}$ which satisfies Assumption \ref{ass-1}, and then discuss the generalization of Theorem \ref{Min-In1} to Riemannian warped products, which need not be rotationally symmetric.
\subsection{Asymptotic estimates and monotonicity along the IMCF}
Firstly, we show that if $M^{n+1}$ satisfies Assumption \ref{ass-1}, then it satisfies the conditions in Theorem \ref{Scheuer-es} and has the following asymptotic behavior estimates.
\begin{lem}\label{Lem-4.1}
    Let $(M^{n+1},\bar{g})$ be a warped cylinder satisfying Assumption \ref{ass-1}, then it satisfies the conditions in Theorem \ref{Scheuer-es}. Moreover, if $\Sigma_t$ are the solution hypersurfaces of the IMCF \eqref{flow} starting from a strictly mean convex and star-shaped hypersurface $\Sigma$, then we have the following asymptotic behavior estimates:
    \begin{itemize}
        \item[(a)] If $\kappa>0$, then the mean curvature $H$ of $\Sigma_t$ satisfies
        \begin{equation}\label{es-H}
            H=n\sqrt{\kappa}+O(te^{-\frac{2}{n}t}).
        \end{equation}
        \item[(b)] If $\kappa=0$, then there exist positive constants $\alpha, \bar{\varphi}$ and a positive function $\varepsilon(t)$ such that $\lim_{t\to\infty}\varepsilon(t)=0$ with
        \begin{align}
            &|\varphi H-n{\varphi}'|=O(e^{-\alpha t}),\label{es-phi H}\\
            &d\mu_t=\varphi^n\left(1+O(e^{-2\alpha t})\right),\label{es-gij}\\
            &\bar{\varphi}e^{\frac{t}{n}-\varepsilon(t)}\leq\varphi\leq\bar{\varphi}e^{\frac{t}{n}+\varepsilon(t)},\label{es-phitwo}
        \end{align}
        where $d\mu_t$ is the area element of $\Sigma_t$.
    \end{itemize}
\end{lem}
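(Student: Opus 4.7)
The plan is to first verify Scheuer's structural hypotheses \eqref{SA-1}--\eqref{SA-2} from Assumption~\ref{ass-1}, then extract the asymptotic behaviour of $\varphi$ at infinity, and finally combine the umbilical estimate \eqref{S-es} with the area identity $|\Sigma_t|=e^t|\Sigma_0|$ to derive (a) and (b). Put $p(r):=\varphi''(r)/\varphi(r)$. Condition (iii) says $p$ is nonincreasing, while (i) gives $p\geq\kappa$. A direct computation yields the identity
\begin{equation*}
\frac{d}{dr}\bigl((\varphi')^2-p(r)\varphi^2\bigr)=2\varphi'(\varphi''-p\varphi)-p'(r)\varphi^2=-p'(r)\varphi^2\geq 0,
\end{equation*}
which implies $p(r)\varphi^2(r)\leq (\varphi')^2(r)+C_0$ for a constant $C_0$ depending only on the initial data, and hence $\varphi''\varphi/(\varphi')^2\leq 1+C_0/(\varphi')^2$. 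This right-hand side stays bounded as $r\to\infty$ in both cases: if $\kappa>0$, then $\varphi''\geq\kappa\varphi$ forces $\varphi'\to\infty$; if $\kappa=0$, condition (ii) gives $\varphi'\leq 1$ (making \eqref{SA-2} vacuous) and $\varphi'\geq\varphi'(a)>0$ since $\varphi''\geq 0$. Differentiating $\varphi''=p\varphi$ gives $\varphi'''=p'\varphi+p\varphi'$ and so $\varphi'''\varphi/(\varphi'\varphi'')=p'\varphi/(p\varphi')+1\leq 1$, settling the second half of \eqref{SA-1}. When $\kappa>0$, the sandwich $\kappa-C/\varphi^2\leq (\varphi')^2/\varphi^2\leq \kappa+1/\varphi^2$ (obtained by combining the integrated version of the above with (ii)) further yields $|\varphi'/\varphi-\sqrt{\kappa}|=O(\varphi^{-2})$.

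Theorem~\ref{Scheuer-es} therefore applies, so $\Sigma_t$ remains strictly mean convex and star-shaped for all time and obeys \eqref{S-es}. Since $\Sigma_t$ is uniformly close to a coordinate sphere for large $t$ and $|\Sigma_t|=e^t|\Sigma_0|$, on $\Sigma_t$ the function $\varphi$ is comparable to $\bar\varphi e^{t/n}$ with $\bar\varphi:=(|\Sigma_0|/\omega_n)^{1/n}$. For part~(a) ($\kappa>0$), this gives $\varphi^{-2}=O(e^{-2t/n})$ and $\varphi\varphi'\sim\sqrt{\kappa}\,\bar\varphi^2 e^{2t/n}$. Tracing \eqref{S-es} produces $|H-n\varphi'/\varphi|\leq cnt/(\varphi\varphi')=O(te^{-2t/n})$; coupling this with $|n\varphi'/\varphi-n\sqrt{\kappa}|=O(e^{-2t/n})$ from the preceding paragraph yields \eqref{es-H}. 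For part~(b) ($\kappa=0$), $\varphi'\leq 1$ is bounded, so the improved form of \eqref{S-es} with $e^{-\alpha t}$ in place of $t$ applies; multiplying the trace estimate by $\varphi$ and using $\varphi'\geq\varphi'(a)>0$ then produces $|\varphi H-n\varphi'|\leq Ce^{-\alpha t}$, i.e.\ \eqref{es-phi H}.

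Writing $\Sigma_t=\{(r(\theta,t),\theta):\theta\in\mathbb{S}^n\}$ in the graph form of \eqref{def-phi}, the induced area form factorises as $d\mu_t=\varphi^n\upsilon\,d\mu_{\mathbb{S}^n}$ with $\upsilon=\sqrt{1+|D\phi|^2}$. The maximum principle argument underlying \eqref{S-es}, applied to the evolution equation for $\upsilon$ along \eqref{flow-2} under the bounded-$\varphi'$ hypothesis, furnishes $|D\phi|^2=O(e^{-2\alpha t})$; this gives $\upsilon^2=1+O(e^{-2\alpha t})$ and hence \eqref{es-gij}. Integrating against $d\mu_{\mathbb{S}^n}$ and using $|\Sigma_t|=e^t|\Sigma_0|$ produces
\begin{equation*}
\int_{\mathbb{S}^n}\varphi^n(r(\theta,t))\,d\mu_{\mathbb{S}^n}=\omega_n\bar\varphi^n e^t\bigl(1+O(e^{-2\alpha t})\bigr).
\end{equation*}
The gradient bound on $\phi$ controls the oscillation of $\phi$ on $\mathbb{S}^n$ by $O(e^{-\alpha t})$; since $\Phi'=1/\varphi$ and $\varphi'\leq 1$, this translates into $\mathrm{osc}(\varphi)/\varphi=O(e^{-\alpha t})$ on $\Sigma_t$. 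Combined with the previous display, this forces $\varphi/(\bar\varphi e^{t/n})=1+O(e^{-\alpha t})$ pointwise on $\Sigma_t$, which is \eqref{es-phitwo} with $\varepsilon(t)=Ce^{-\alpha t}$.

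The principal technical obstacle is the exponential decay $|D\phi|^2=O(e^{-2\alpha t})$ used in part~(b): it is not stated verbatim in Theorem~\ref{Scheuer-es}, but is produced by the same maximum principle argument applied to $\upsilon$ once the Weingarten tensor is known to be close to a multiple of the identity, and can be extracted from the proof in the cited references.
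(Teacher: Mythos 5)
Your proposal is correct and follows essentially the same route as the paper: you verify Scheuer's hypotheses \eqref{SA-1}--\eqref{SA-2} from Assumption \ref{ass-1} via the monotonicity forced by condition (iii) (your quantity $(\varphi')^2-(\varphi''/\varphi)\varphi^2$ plays the role of the paper's $Q_1,Q_2,Q_3$ and L'H\^opital step), then combine the umbilicity estimate \eqref{S-es} with the known radial growth $\varphi\sim e^{t/n}$ to get (a) and \eqref{es-phi H}, and for \eqref{es-gij}--\eqref{es-phitwo} you rely on the gradient decay $|D\phi|=O(e^{-\alpha t})$ extracted from the proof in \cite{Scheuer19}, which is exactly the citation the paper makes (your rederivation of \eqref{es-phitwo} from this decay plus $|\Sigma_t|=e^t|\Sigma_0|$ is a minor, valid variant of quoting Scheuer's $\log\varphi-\tfrac{t}{n}$ estimate). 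Two points should be made explicit but are immediate: the hypothesis \eqref{SA-2} is actually needed when $\kappa>0$ (since $\varphi'\to\infty$) and follows in one line from $\varphi''\geq\kappa\varphi$ together with Assumption \ref{ass-1} (ii), and the positive lower bound on $\varphi'$ should be taken at some $r_1>a$ rather than at $a$, since $\varphi'(a)$ may vanish (e.g.\ at the Schwarzschild horizon), which suffices for all the asymptotic statements.
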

\begin{proof}
 (a) We first assume that $\kappa>0$. By Assumption \ref{ass-1}, $\varphi''\geq\kappa\varphi>0$ and hence $\varphi$ is a strictly increasing and convex function, which implies $\lim_{r\to\infty}\varphi(r)=\infty$. Define
 \begin{equation*}
     Q_1(r)=\frac{(\varphi')^2}{1+\kappa\varphi^2}.
 \end{equation*}
 Then by Assumption \ref{ass-1} (ii) and direct calculation, we see that $Q_1(r)\leq 1$ and $Q_1'(r)\geq 0$. Hence $\varphi'(r)$ is a strictly increasing function in $r$ with $\lim_{r\to\infty}{\varphi'(r)}=\infty$. By Assumption \ref{ass-1} (i) and (iii), the function $\frac{\varphi''}{\varphi}$ has a positive limit when $r$ tends to infinity. Then by L'Hospital's rule, we have
 \begin{equation*}
     1\geq \lim_{r\to\infty}Q_1(r)=\lim_{r\to\infty}\frac{\varphi''}{\kappa\varphi}\geq 1,
 \end{equation*}
 which yields 
 \begin{equation}\label{es-limit1}
     \lim_{r\to\infty}Q_1(r)=\lim_{r\to\infty}\frac{\varphi''}{\kappa\varphi}=1,
 \end{equation}
 from which we deduce 
 \begin{equation}\label{es-limit2}
     \lim_{r\to\infty}\frac{\varphi''\varphi}{(\varphi')^2}=\lim_{r\to\infty}\left[\frac{\varphi''}{\varphi}\frac{1+\kappa\varphi^2}{(\varphi')^2}\frac{\varphi^2}{1+\kappa\varphi^2}\right]=1.
 \end{equation}
 Moreover, by Assumption \ref{ass-1} (iii), we have
 \begin{equation}\label{es-de}
     \varphi'''\varphi-\varphi''\varphi'=\varphi^2\partial_r\left(\frac{\varphi''}{\varphi}\right)\leq 0,
 \end{equation}
 and hence 
 \begin{equation}\label{es-limit3}
     \limsup_{r\to\infty,\varphi''>0}\frac{\varphi'''\varphi}{\varphi'\varphi''}\leq 1.
 \end{equation}
 Indeed, by a simple argument using Lagrange mean value theorem, we could prove the superior limit is precisely 1. Combining \eqref{es-limit2} with \eqref{es-limit3}, we see that in case $\kappa>0$, $M^{n+1}$ satisfies the conditions in Theorem \ref{Scheuer-es}. Then, we turn to show the estimate \eqref{es-H}. Note that 
 \begin{equation*}
     1-Q_1(r)=\frac{1+\kappa\varphi^2-(\varphi')^2}{1+\kappa\varphi^2},
 \end{equation*}
 since $Q_2(r):=1+\kappa\varphi^2-(\varphi')^2\geq 0$ and $Q_2'(r)\leq 0$ by Assumption \ref{ass-1} (i) and (ii), we have $1-Q_1(r)=O(\varphi^{-2})$. This means
\begin{equation}
    \frac{\varphi'}{\sqrt{1+\kappa\varphi^2}}=1+O(\varphi^{-2}).
\end{equation}
Therefore,
\begin{equation}\label{es-limit4}
    \frac{\varphi'}{\varphi}=\frac{\varphi'}{\sqrt{1+\kappa\varphi^2}}\frac{\sqrt{1+\kappa\varphi^2}}{\varphi}=\sqrt{\kappa}+O(\varphi^{-2})\quad \text{and} \quad \varphi'=O(\varphi).
\end{equation}
It's known that along the IMCF \eqref{flow}, $\varphi=O(e^{\frac{t}{n}})$ (see \cites{Scheuer17,Scheuer19} for example). Combining this fact with \eqref{es-limit4} and \eqref{S-es} gives the estimate \eqref{es-H}.

(b) We next deal with the case $\kappa=0$. By Assumption \ref{ass-1}, there exists a constant $0<\bar{\varphi}'\leq1$, such that $\lim_{r\to\infty}{\varphi'(r)}=\bar{\varphi}'$. 
Define 
\begin{equation}\label{de-Q2}
    Q_3(r)=1-(\varphi')^2+\varphi\varphi'',
\end{equation}
then $Q_3(r)\geq 0$ by Assumption \ref{ass-1} and $Q_3'(r)\leq 0$ by \eqref{es-de}. Denote $Q_0:=\lim_{r\to\infty}{Q_2(r)}\geq 0$, then we have $\lim_{r\to\infty}{\varphi\varphi''}=Q_0+(\bar{\varphi}')^2-1$ and hence
\begin{equation}\label{es-case2-1}
    \lim_{r\to\infty}\frac{\varphi''\varphi}{(\varphi')^2}=\frac{Q_0+(\bar{\varphi}')^2-1}{(\bar{\varphi}')^2}.
\end{equation}
Combining \eqref{es-case2-1} with \eqref{es-limit3} implies that in case $\kappa=0$, $M^{n+1}$ satisfies the conditions in Theorem \ref{Scheuer-es}, the estimate \eqref{S-es} says that
\begin{equation*}
     \left\vert{h_j^i-\frac{\varphi'}{\varphi}\delta_j^i}\right\vert\leq \frac{ce^{-\alpha t}}{\varphi\varphi'},
\end{equation*}
which implies
\begin{equation}\label{es-case2-2}
    |\varphi H-n\varphi'|=O(e^{-\alpha t}).
\end{equation}
This is \eqref{es-phi H}. Next, the argument in \cite[Page 1140-1141]{Scheuer19} yields that
\begin{equation}\label{decay-g}
    |D\phi|\leq e^{-\alpha t},
\end{equation}
where the function $\phi$ is defined in \eqref{def-phi} and there exist a constant $\tilde{\varphi}$ and a positive function $\varepsilon(t)$ such that $\lim_{t\to\infty}{\varepsilon(t)}=0$ with
\begin{equation}\label{es-log}
    \tilde{\varphi}-\varepsilon(t)\leq\log{\varphi}-\frac{t}{n}\leq \tilde{\varphi}+\varepsilon(t).
\end{equation} 
Recall that the metric induced on $\Sigma_t$ has the expression
\begin{equation}\label{ex-metric}
    g_{ij}=\varphi^2(\sigma_{ij}+\phi_i\phi_j),
\end{equation}
where $\sigma_{ij}$ is the canonical metric of $\mathbb{S}^n$. Then combining \eqref{ex-metric} with \eqref{decay-g} gives \eqref{es-gij},
while \eqref{es-log} implies \eqref{es-phitwo} with $\bar{\varphi}=e^{\tilde{\varphi}}$. This completes the proof of Lemma \ref{Lem-4.1}.
\end{proof}
Let $\Sigma$ be a strictly mean convex and star-shaped hypersurface in $M$, we evolve it by the inverse mean curvature flow \eqref{flow}. Assume that $\Sigma_t$ are the flow hypersurfaces and $\Omega_t$ are the domains bounded by $\Sigma_t$ and $\partial M$. We define the quantities
\begin{align}\label{defn-W}
    \mathcal{W}(\Sigma_t)=\int_{\Sigma_t}{H}\,d\mu+\int_{\Omega_t}{\overline{Ric}(\partial_r,\partial_r)}\,dv,
\end{align}
and
\begin{align}\label{defn-G}
    \mathcal{G}(t)=\vert\Sigma_t\vert^{-\frac{n-1}{n}}\left(\mathcal{W}(\Sigma_t)-\xi(\vert\Sigma_t\vert)\right),
\end{align}
where $\xi$ is the associated function defined in Theorem \ref{Min-In1}.

Firstly, we recall the following evolution equations of the area element and the mean curvature for evolving hypersurfaces under a general flow for later use. The proof is standard and can be found in many references, see \cites{BHW16,LW17} for example.
\begin{prop}\label{Prop-general-evo}
Under the general flow
\begin{equation}\label{flow-general}
\partial_t x=\mathcal{F}\nu,
\end{equation}
for some normal speed function $\mathcal{F}$, the area element and the mean curvature of $\Sigma_t$ evolve by 
\begin{align}
    \partial_td\mu_t=&\mathcal{F}H d\mu_t,\label{eveq-area}\\
    \partial_tH=&-\Delta{\mathcal{F}}-\mathcal{F}\vert A\vert^2-\mathcal{F}\overline{Ric}(\nu,\nu), \label{eveq-H}
\end{align}
where $\Delta$ is the Laplacian operator with respect to the induced metric on $\Sigma_t$.
\end{prop}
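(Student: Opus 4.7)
The plan is to derive both evolution equations directly from the flow equation $\partial_t x = \mathcal{F}\nu$ using standard tensor calculus for hypersurfaces. Since the ambient term only enters through the evolution of the second fundamental form, the two formulas decouple: the area element formula is a one-line contraction, while the mean curvature formula requires tracking $\partial_t h_{ij}$ as well.

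First I would compute the evolution of the induced metric $g_{ij} = \bar{g}(\partial_i x, \partial_j x)$. Differentiating in $t$ and substituting $\partial_t\partial_i x = \partial_i(\mathcal{F}\nu) = (\partial_i\mathcal{F})\nu + \mathcal{F}\bar{\nabla}_i\nu$, and using the Weingarten relation $\bar{\nabla}_i\nu = h_i^{\,k}\partial_k$ together with $\bar{g}(\nu,\partial_j x) = 0$, one obtains $\partial_t g_{ij} = 2\mathcal{F}h_{ij}$. Since $d\mu_t = \sqrt{\det g_{ij}}\,d\theta$, the identity $\partial_t\sqrt{\det g_{ij}} = \tfrac{1}{2}\sqrt{\det g_{ij}}\,g^{ij}\partial_t g_{ij}$ gives \eqref{eveq-area} immediately.

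Next, the relation above also yields $\partial_t g^{ij} = -2\mathcal{F}h^{ij}$, so from $H = g^{ij}h_{ij}$ it suffices to find $\partial_t h_{ij}$. Writing $h_{ij} = -\bar{g}(\nu,\bar{\nabla}_i\partial_j x)$, differentiating in $t$, and commuting $\partial_t$ past $\bar{\nabla}_i$ via the Ricci identity introduces the ambient Riemann tensor and produces the classical formula
\[
\partial_t h_{ij} = -\nabla_i\nabla_j\mathcal{F} + \mathcal{F}\,h_i^{\,k}h_{kj} - \mathcal{F}\,\bar{R}(\nu,\partial_i,\partial_j,\nu).
\]
Contracting with $g^{ij}$ and combining with the formula for $\partial_t g^{ij}$ gives
\[
\partial_t H = -2\mathcal{F}|A|^2 - \Delta\mathcal{F} + \mathcal{F}|A|^2 - \mathcal{F}\,\overline{Ric}(\nu,\nu),
\]
which simplifies to \eqref{eveq-H}.

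The only point requiring care is the commutation used to derive the evolution of $h_{ij}$, since it is where the ambient curvature enters and where sign conventions for $\nu$ and for $h_{ij}$ have to be consistent throughout. Beyond that, both identities are purely algebraic contractions; accordingly, I would present the computation succinctly and refer the reader to standard sources such as \cite{BHW16} or \cite{LW17} for the detailed derivation, as the authors themselves indicate.
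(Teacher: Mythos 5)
Your proposal is correct: the computation $\partial_t g_{ij}=2\mathcal{F}h_{ij}$, the trace identity for $\partial_t\sqrt{\det g}$, the standard evolution of $h_{ij}$, and the final contraction giving $\partial_t H=-\Delta\mathcal{F}-\mathcal{F}|A|^2-\mathcal{F}\overline{Ric}(\nu,\nu)$ is exactly the classical derivation. The paper itself offers no proof and simply cites \cite{BHW16} and \cite{LW17}, which contain precisely this argument, so your approach coincides with the paper's (modulo the sign-convention care you already flag).
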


Then, using the weighted isoperimetric inequality \eqref{isop-2}, we obtain
\begin{prop}\label{monprop}
   Assume that $(M^{n+1},\bar{g})$ is a warped cylinder satisfying Assumption \ref{ass-1}. Then,
   along the flow \eqref{flow}, we have $$\mathcal{G}'(t)\leq0.$$ Moreover,  the equality holds if and only if either one of the following two cases holds:
\begin{itemize}
    \item[a)]$\Sigma_t$ is a radial coordinate sphere in $M$;
    \item[b)]there exists a radial coordinate ball $B(R)$(may be empty) such that $M\setminus B(R)$ is isometric to a space form of constant nonpositive curvature and  $\Sigma_t$ is  a geodesic sphere in it.
\end{itemize}
\end{prop}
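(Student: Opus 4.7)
My plan is to differentiate $\mathcal{G}(t)$ directly and reduce the monotonicity to the weighted isoperimetric inequality \eqref{isop-2}. Using $\frac{d}{dt}|\Sigma_t| = |\Sigma_t|$ along \eqref{flow}, the inequality $\mathcal{G}'(t)\le 0$ is equivalent to
\begin{equation*}
\mathcal{W}'(t)-\tfrac{n-1}{n}\mathcal{W}(\Sigma_t) \;\le\; \xi'(|\Sigma_t|)|\Sigma_t|-\tfrac{n-1}{n}\xi(|\Sigma_t|).
\end{equation*}
The first step is to expand $\mathcal{W}'(t)$ via Proposition \ref{Prop-general-evo} with $\mathcal{F}=1/H$: the boundary term gives $\int_{\Sigma_t}(H-|A|^2/H-\overline{Ric}(\nu,\nu)/H)\,d\mu$ (the $\Delta(1/H)$ term integrates to zero), while the volume term contributes $\int_{\Sigma_t}\overline{Ric}(\partial_r,\partial_r)/H\,d\mu$. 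Applying the Cauchy--Schwarz bound $|A|^2\ge H^2/n$ yields
\begin{equation*}
\mathcal{W}'(t)\;\le\; \tfrac{n-1}{n}\int_{\Sigma_t} H\,d\mu+\int_{\Sigma_t}\frac{\overline{Ric}(\partial_r,\partial_r)-\overline{Ric}(\nu,\nu)}{H}\,d\mu,
\end{equation*}
with equality iff $\Sigma_t$ is umbilic.

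The second step is a pointwise computation of $\overline{Ric}(\partial_r,\partial_r)-\overline{Ric}(\nu,\nu)$. Writing $\nu=\alpha\partial_r+\beta \bar X$ with $\bar X$ a unit horizontal vector, rotational symmetry kills the cross term and a standard warped--product Ricci computation gives
\begin{equation*}
\overline{Ric}(\partial_r,\partial_r)-\overline{Ric}(\nu,\nu)\;=\;-(n-1)\beta^{2}\,\frac{\varphi\varphi''+1-(\varphi')^2}{\varphi^{2}},
\end{equation*}
which is $\le 0$ by Assumption \ref{ass-1}(i),(ii). Substituting $\overline{Ric}(\partial_r,\partial_r)=-n\varphi''/\varphi$ into $\mathcal{W}(\Sigma_t)$, the above estimates combine to
\begin{equation*}
\mathcal{W}'(t)-\tfrac{n-1}{n}\mathcal{W}(\Sigma_t)\;\le\;(n-1)\int_{\Omega_t}\frac{\varphi''}{\varphi}\,dv-(n-1)\int_{\Sigma_t}\frac{\beta^{2}(\varphi\varphi''+1-(\varphi')^{2})}{H\,\varphi^{2}}\,d\mu.
\end{equation*}

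The third step identifies the right-hand side of the target inequality. Parametrising a radial sphere by $r$ one checks $\tilde\xi(r)=n\omega_n\varphi^{n-1}\varphi'-n\omega_n\int_a^{r}\varphi''\varphi^{n-1}ds$ and $|\Sigma_r|=\omega_n\varphi^n$, so $\xi'(|\Sigma|)=(n-1)\varphi'/\varphi$ and a short calculation gives
\begin{equation*}
\xi'(|\Sigma_t|)|\Sigma_t|-\tfrac{n-1}{n}\xi(|\Sigma_t|)\;=\;(n-1)\,\xi_{1}(|\Sigma_t|),
\end{equation*}
where $\xi_1$ is the function from Theorem \ref{thm-isop}. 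Combining this with the estimate above, the desired inequality reduces to $\int_{\Omega_t}\varphi''/\varphi\,dv\le \xi_1(|\Sigma_t|)$, which is exactly \eqref{isop-2} (note the extra $\beta^{2}$-integral is non-negative and only helps). Since $\Sigma_t$ is $C^\infty$ and homologous to $\partial M$ along the smooth IMCF, Theorem \ref{thm-isop} applies.

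For the equality case, $\mathcal{G}'(t)=0$ forces simultaneously: (i) $|A|^2=H^2/n$, so $\Sigma_t$ is totally umbilic; (ii) the $\beta^{2}$-integrand vanishes, i.e.\ at every point of $\Sigma_t$ either $\beta=0$ or $\varphi\varphi''+1-(\varphi')^2=0$; (iii) equality in the weighted isoperimetric inequality $\int_{\Omega_t}\varphi''/\varphi\,dv=\xi_{1}(|\Sigma_t|)$. If either Assumption \ref{ass-1}(ii) or (iii) is strict on the range of $r$ relevant to $\Omega_t$, Theorem \ref{thm-isop} forces $\Omega_t$ to be a radial coordinate ball, giving case (a). Otherwise, on some region $\{r\ge R\}$ both hold with equality, so $\varphi''=\kappa\varphi$ and $(\varphi')^2=1+\kappa\varphi^2$; ODE analysis then shows $M\setminus B(R)$ is isometric to a piece of a space form of nonpositive curvature, and an umbilic hypersurface in that region must be a geodesic sphere, giving case (b). The main obstacle in the proof is precisely this last step: handling rigidity when $\psi:=\varphi\varphi''+1-(\varphi')^2$ vanishes only on a subset, by using the umbilicity of $\Sigma_t$ together with continuity of $\psi$ and $\beta$ to conclude that $\Sigma_t$ lies entirely in a space-form region.
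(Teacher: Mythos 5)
Your monotonicity argument is essentially the paper's: evolve $\mathcal{W}$ via Proposition \ref{Prop-general-evo} with $\mathcal{F}=1/H$, use $|A|^2\ge H^2/n$, the pointwise comparison $\overline{Ric}(\partial_r,\partial_r)-\overline{Ric}(\nu,\nu)=-(n-1)\beta^2\varphi^{-2}\bigl(\varphi\varphi''+1-(\varphi')^2\bigr)\le 0$ from Assumption \ref{ass-1}(i),(ii), the identity $\xi'(x)x-\tfrac{n-1}{n}\xi(x)=(n-1)\xi_1(x)$ (which the paper obtains by evolving radial spheres under IMCF, you by direct differentiation — a cosmetic difference), and finally the weighted isoperimetric inequality \eqref{isop-2}. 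That part is correct and complete.

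The equality case, however, has two genuine gaps. First, your plan for the forward direction in the "otherwise" case is aimed at the wrong statement: when $\psi:=\varphi\varphi''+1-(\varphi')^2$ vanishes exactly on $[R,\infty)$ with $R>a$, the flow hypersurface need \emph{not} lie entirely in the space-form region $M\setminus B(R)$, and trying to prove that it does (via "continuity of $\psi$ and $\beta$") would fail. The correct dichotomy, as in the paper, is: if $\min_{\Sigma_t}r<R$, then on $\Sigma_t\cap B(R)$ strictness of $\psi$ forces $\beta=0$, so that portion lies in a radial coordinate sphere, and by connectedness all of $\Sigma_t$ is that radial sphere inside $B(R)$ — this lands in case (a), not (b); only when $\min_{\Sigma_t}r\ge R$ does umbilicity in the space form give a geodesic sphere and case (b). Also note that your appeal to the rigidity statement of Theorem \ref{thm-isop} "on the range of $r$ relevant to $\Omega_t$" is a localization the theorem does not provide as stated; it is unnecessary anyway, since the pointwise condition $\beta^2\psi\equiv 0$ plus umbilicity (exactly the paper's route) already suffices. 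Second, the proposition is an "if and only if," and you never address the converse: that a geodesic sphere in the space-form end actually gives $\mathcal{G}'(t)=0$. This is not automatic, because one must check equality in \eqref{isop-2} for such (non-radial) spheres; the paper does this through the volume-translation identity \eqref{mon-eq13}, showing $\int_{\Omega_t}\overline{Ric}(\partial_r,\partial_r)\,dv=\int_{\widehat{\Omega}_t}\overline{Ric}(\partial_r,\partial_r)\,dv$ for the radial ball $\widehat{\Omega}_t$ of equal boundary area, and then concluding via \eqref{mon-eq9}. Without this step the characterization of equality is incomplete.
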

\begin{proof}   
    Combining \eqref{eveq-area}, \eqref{eveq-H} with $\mathcal{F}=1/H$ and using the coarea formula, we derive that
    \begin{align}\label{mon-eq2}
       \frac{d}{dt}\mathcal{W}(\Sigma_t)=& \frac{d}{dt}\left(\int_{\Sigma_t}{H}\,d\mu_t+\int_{\Omega_t}{\overline{Ric}(\partial_r,\partial_r)}\,dv\right)\nonumber\\
       =&\int_{\Sigma_t}\left[-\Delta\frac{1}{H}-\frac{\vert A\vert^2}{H}-\frac{\overline{Ric}(\nu,\nu)}{H}+H+\frac{\overline{Ric}(\partial_r,\partial_r)}{H}\right]\,d\mu_t\nonumber\\
        \leq&\int_{\Sigma_t}\left[\frac{n-1}{n}H+\frac{1}{H}(\overline{Ric}(\partial_r,\partial_r)-\overline{Ric}(\nu,\nu))\right]\,d\mu_t,
    \end{align}
    where we used the fact that $\vert A\vert^2\geq\frac{H^2}{n}$. 

     Recall that, by a direct calculation (see e.g \cites{GL15}), we have the Ricci curvature in $M$:
    \begin{align}\label{mon-eq1}
    \overline{Ric}=-n\frac{\varphi''}{\varphi}dr^2-[(n-1)((\varphi')^2-1)+\varphi\varphi'']g_{\mathbb{S}^n}.      
    \end{align}
   Hence, 
    \begin{align}\label{mon-eq3}
    \overline{Ric}(\partial_r,\partial_r)-\overline{Ric}(\nu,\nu)=&(n-1)\frac{(\varphi')^2-\varphi\varphi''-1}{\varphi^2}(1-\frac{u^2}{\varphi^2}),
   \end{align}
   where $u=\langle\varphi\partial_r,\nu\rangle$ is the support function of $\Sigma_t$. Since the warped function $\varphi$ of $M$ satisfies conditions (i) and (ii) in Assumption \ref{ass-1}, we have
    \begin{align}\label{mon-eq4}
      (\varphi')^2-\varphi\varphi''-1\leq 1+\kappa\varphi^2-\kappa\varphi^2-1
      =0.
    \end{align}
    Substituting \eqref{mon-eq4} into \eqref{mon-eq3}, we have
    \begin{align}\label{mon-eq11}
    \overline{Ric}(\partial_r,\partial_r)-\overline{Ric}(\nu,\nu)\leq0.
    \end{align}
    Combining this with \eqref{mon-eq2}, we get
    \begin{align}\label{mon-eq5}
     \frac{d}{dt}\mathcal{W}(\Sigma_t)\leq \frac{n-1}{n}\int_{\Sigma_t}{H}\,d\mu_t.
    \end{align}
    
    Next, let $S(r(t))=\{r(t)\}\times\mathbb{S}^n$ be a family of radial coordinate spheres evolving under the inverse mean curvature flow  in $M$, $B(r(t))$ be the domain bounded by $\partial M$ and $S(r(t))$. Then, the above calculation implies that
    \begin{align}\label{mon-eq6}
     \frac{d}{dt}\mathcal{W}(S(r(t)))=\frac{n-1}{n}\int_{S(r(t))}{H}\,d\mu_t.
    \end{align}
    By the definition of the function $\xi(x):\mathbb{R}_+\to\mathbb{R}_+$, we know that
    \begin{align*}
       \mathcal{W}(S(r(t)))= \int_{S(r(t))}{H}\,d\mu_t+\int_{B(r(t))}{\overline{Ric}(\partial_r,\partial_r)}\,dv=\xi(\vert S(r(t))\vert),\ \forall t\in[0,+\infty).
    \end{align*}
   Taking the derivative on both sides of the above equation, and using \eqref{mon-eq6} we get
    \begin{align}\label{mon-eq7}
       \xi'(\vert S(r(t))\vert)\vert S(r(t))\vert=&\frac{n-1}{n}\int_{S(r(t))}{H}\,d\mu_t\nonumber\\
       =&\frac{n-1}{n}\left(\xi(\vert S(r(t))\vert)-\int_{B(r(t))}{\overline{Ric}(\partial_r,\partial_r)}\,dv\right).
    \end{align}
    Note that $\overline{Ric}(\partial_r,\partial_r)=-n\frac{\varphi''}{\varphi}$, by the weighted isoperimetric inequality \eqref{isop-2} we built in Theorem \ref{thm-isop}, there holds
    \begin{align}\label{mon-eq8}
    -\int_{B(r(t))}{\overline{Ric}(\partial_r,\partial_r)}\,dv=&n\int_{B(r(t))}\frac{\varphi''}{\varphi}\,dv=n\xi_{1}(\vert S(r(t))\vert).
    \end{align}
    Combining \eqref{mon-eq8} with \eqref{mon-eq7}, we claim that the function $\xi(x)$ satisfies the following equation:
    \begin{align}\label{mon-eq9}
        \xi'(x)x=\frac{n-1}{n}(\xi(x)+n\xi_1(x)).
    \end{align}
Since the area of $\Sigma_t$ satisfies $\frac{d}{dt}\vert\Sigma_t\vert=\vert\Sigma_t\vert$, we now have
\begin{align}\label{mon-eq10}
    \frac{d}{dt}\xi(\vert\Sigma_t\vert)=&\xi'(\vert\Sigma_t\vert)\vert\Sigma_t\vert\nonumber\\=&\frac{n-1}{n}(\xi(\vert\Sigma_t\vert)+n\xi_1(\vert\Sigma_t\vert))\nonumber\\
    \geq&\frac{n-1}{n}\left(\xi(\vert\Sigma_t\vert)-\int_{\Omega_t}{\overline{Ric}(\partial_r,\partial_r)}\,dv\right),
\end{align}
where in the second equality we used \eqref{mon-eq9}, and in the last inequality we used the weighted isoperimetric inequality \eqref{isop-2}.
Hence, combining \eqref{mon-eq10} with \eqref{mon-eq5} gives
\begin{align*}
    \frac{d}{dt}\left(\mathcal{W}(\Sigma_t)-\xi(\vert\Sigma_t\vert)\right)
        \leq\frac{n-1}{n}\left(\mathcal{W}(\Sigma_t)-\xi(\vert\Sigma_t\vert)\right).
\end{align*}
Now, we conclude that $\mathcal{G}'(t)\leq0$ by using $\frac{d}{dt}\vert\Sigma_t\vert=\vert\Sigma_t\vert$ again. 

Finally, we treat the equality case.  $\mathcal{G}'(t)=0$ forces the equality to hold in \eqref{mon-eq2}, which is equivalent to $\Sigma_t$ being a totally umbilical hypersurface in $M$ and satisfying $\overline{Ric}(\partial_r,\partial_r)-\overline{Ric}(\nu,\nu)=0$. As mentioned in the proof of Lemma \ref{Lem-4.1}, the quantity 
\begin{equation*}
 Q_1(r)=\frac{(\varphi')^2}{1+\kappa\varphi^2}
\end{equation*}
is nondecreasing in $r$. Combining this fact with condition (ii) in Assumption \ref{ass-1} ($Q_1(r)\leq 1$), it's sufficient to consider the following two cases for $M$:

Case 1. $\varphi'<\sqrt{1+\kappa\varphi^2}$, $\forall r\in[a,+\infty)$.
In this case, by the condition (i) in Assumption \ref{ass-1}, we have $(\varphi')^2-\varphi\varphi''-1<0$. Combining this with \eqref{mon-eq3}, $\overline{Ric}(\partial_r,\partial_r)-\overline{Ric}(\nu,\nu)=0$ implies that $\Sigma_t$ is a radial coordinate sphere.

Case 2. $\exists R>a$ such that $\varphi'<\sqrt{1+\kappa\varphi^2}$ for $r\in[a,R)$ and $\varphi'=\sqrt{1+\kappa\varphi^2}$ for $r\in[R,+\infty)$, which implies that $M\setminus B(R)$ is isometric to a space form of constant nonpositive curvature. In this case, if $\min_{\Sigma_t}r<R$, by the similar argument as in Case 1, we have $\Sigma_t\cap B(R)$ is a part of a radial coordinate sphere. This implies $\Sigma_t\subset B(R)$ is a radial coordinate sphere. If $\min_{\Sigma_t}r\geq R$, then $\Sigma_t$ is a totally umbilical hypersurface in $M\setminus B(R)$, which means $\Sigma_t$ is a geodesic sphere. 

Reversely, if $\Sigma_t$ is a radial coordinate sphere, then $\mathcal{G}'(t)=0$ obviously. If $\Sigma_t\subset M\setminus B(R)$ is a geodesic sphere in a space form of nonpositive constant curvature, and $\Omega_t$ is the domain bounded by $\Sigma_t$ and $\partial M$, we turn to show that $\mathcal{G}'(t)=0$. Since $\overline{Ric}=-n\kappa\bar{g}$ in $M\setminus B(R)$, by \eqref{mon-eq2} we have 
\begin{align}\label{mon-eq12}
 \frac{d}{dt}\mathcal{W}(\Sigma_t)=\frac{n-1}{n}\int_{\Sigma_t}Hd\mu_t.   
\end{align}
 On the other hand, assume that $\widehat{\Sigma}_t$ is the coordinate sphere with $\vert\widehat{\Sigma}_t\vert=\vert\Sigma_t\vert$ and $\widehat{\Omega}_t$ is the radial ball bounded by $\partial M$ and $\widehat{\Sigma}_t$. Then
\begin{align}\label{mon-eq13}
    \int_{\Omega_t}{\overline{Ric}(\partial_r,\partial_r)}\,dv=&\int_{B(R)}{\overline{Ric}(\partial_r,\partial_r)}\,dv-n\kappa\text{Vol}_0(\Omega_t\setminus B(R))\nonumber\\
    =&\int_{B(R)}{\overline{Ric}(\partial_r,\partial_r)}\,dv-n\kappa\text{Vol}_0(\Omega_t)+n\kappa\text{Vol}_0(B(R))\nonumber\\
    =&\int_{B(R)}{\overline{Ric}(\partial_r,\partial_r)}\,dv-n\kappa\text{Vol}_0(\widehat{\Omega}_t\setminus B(R))=\int_{\widehat{\Omega}_t}{\overline{Ric}(\partial_r,\partial_r)}\,dv,
\end{align}
where $\text{Vol}_0$ is the volume in the space form. Hence, by \eqref{mon-eq9} we have 
\begin{align}\label{mon-eq14}
\frac{d}{dt}\xi(\vert\Sigma_t\vert)=&\frac{n-1}{n}(\xi(\vert\Sigma_t\vert)+n\xi_1(\vert\Sigma_t\vert))\nonumber\\
=&\frac{n-1}{n}(\xi(\vert\Sigma_t\vert)+n\xi_1(\vert\widehat{\Sigma}_t\vert))\nonumber\\
=&\frac{n-1}{n}\left(\xi(\vert\Sigma_t\vert)-\int_{\widehat{\Omega}_t}{\overline{Ric}(\partial_r,\partial_r)}\,dv\right)\nonumber\\
=&\frac{n-1}{n}\left(\xi(\vert\Sigma_t\vert)-\int_{\Omega_t}{\overline{Ric}(\partial_r,\partial_r)}\,dv\right),
\end{align}
where we used the weighted isoperimetric inequality \eqref{isop-2} and the identity \eqref{mon-eq13}. Now, combining \eqref{mon-eq12} with \eqref{mon-eq14} gives $\mathcal{G}'(t)=0$.
This completes the proof.
\end{proof}
\subsection{Proof of the Minkowski inequality} 
In this subsection, we complete the proof of Theorem \ref{Min-In1} and Corollary \ref{Min-InD}. Firstly, we investigate the limit of $\mathcal{G}(t)$ as $t\to\infty$.
\begin{prop}\label{limprop}
   Under the flow \eqref{flow}, we have 
   \begin{align*}
       \lim_{t\to\infty}\mathcal{G}(t)\geq0.
   \end{align*}
\end{prop}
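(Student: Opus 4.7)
The plan is to compare $\mathcal{W}(\Sigma_t)$ with its value on the radial coordinate sphere $\widehat{\Sigma}_t$ of the same area (for which $\mathcal{W}(\widehat{\Sigma}_t)=\xi(|\Sigma_t|)$), using the asymptotic estimates of Lemma \ref{Lem-4.1} together with the weighted isoperimetric inequality \eqref{isop-2}. Since $|\Sigma_t|=e^t|\Sigma_0|$ along the IMCF, $|\Sigma_t|^{(n-1)/n}$ grows like $e^{(n-1)t/n}$, and it is enough to show that $\mathcal{W}(\Sigma_t)-\xi(|\Sigma_t|) \geq -o(|\Sigma_t|^{(n-1)/n})$. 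For the Ricci-integral part, Theorem \ref{thm-isop} gives directly
$$\int_{\Omega_t}\overline{Ric}(\partial_r,\partial_r)\,dv \;\geq\; -n\xi_1(|\Sigma_t|) \;=\; \int_{\widehat{\Omega}_t}\overline{Ric}(\partial_r,\partial_r)\,dv,$$
so the problem reduces to a lower bound for $\int_{\Sigma_t}H\,d\mu_t - n\omega_n\varphi'(\hat r_t)\varphi^{n-1}(\hat r_t)$, where $\hat r_t$ is determined by $\omega_n\varphi(\hat r_t)^n=|\Sigma_t|$.

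I would then split into the two cases of Lemma \ref{Lem-4.1}. When $\kappa>0$, the estimate \eqref{es-H} gives $H=n\sqrt{\kappa}+O(te^{-2t/n})$, and from $\varphi'/\varphi\to\sqrt{\kappa}$ (together with $\varphi(\hat r_t)^n\sim e^t$) the same rate holds for $\widehat{H}=n\varphi'(\hat r_t)/\varphi(\hat r_t)$; hence both mean-curvature integrals equal $n\sqrt{\kappa}|\Sigma_t|$ up to errors of order $te^{(1-2/n)t}$, which after dividing by $|\Sigma_t|^{(n-1)/n}$ yield $O(te^{-t/n})\to 0$. When $\kappa=0$, I would parametrize $\Sigma_t$ as a graph $r=r(\theta,t)$ over $\mathbb{S}^n$ so that $d\mu_t=\varphi^n v\,d\sigma$, and apply \eqref{es-phi H}, \eqref{es-gij}, \eqref{es-phitwo} to reduce $\int_{\Sigma_t}H\,d\mu_t$ to $\int_{\mathbb{S}^n}n\varphi'(r(\theta,t))\varphi^{n-1}(r(\theta,t))\,d\sigma$ modulo an error of strictly smaller exponential order than $|\Sigma_t|^{(n-1)/n}$. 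Integrating by parts in $r$ to rewrite $n\varphi'\varphi^{n-1}$ in terms of $\int_a^r\varphi''\varphi^{n-1}\,ds$ and $\int_a^r(\varphi')^2\varphi^{n-2}\,ds$, and doing the same on $\widehat{\Sigma}_t$, the remaining discrepancy becomes a comparison between $\int_{\mathbb{S}^n}F(r(\theta,t))\,d\sigma$ and $\omega_n F(\hat r_t)$ for the nondecreasing function $F(r)=\int_a^r(\varphi')^2\varphi^{n-2}\,ds$; the $C^1$-smallness $|D\phi|\leq e^{-\alpha t}$ established in the proof of Lemma \ref{Lem-4.1} forces $r(\theta,t)$ to concentrate exponentially around $\hat r_t$, and a first-order Taylor expansion of $F$ absorbs the discrepancy into the same class of errors.

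Combining these bounds gives $\mathcal{W}(\Sigma_t)-\xi(|\Sigma_t|)\geq -o(|\Sigma_t|^{(n-1)/n})$ in both cases, hence $\liminf_{t\to\infty}\mathcal{G}(t)\geq 0$. I expect the main obstacle to be the $\kappa=0$ case, since $H$ does not approach a single constant and one must simultaneously exploit the umbilicity rate from \eqref{S-es} and the rate at which $\Sigma_t$ becomes radially symmetric in order for the leading contributions of $\int_{\Sigma_t}H\,d\mu_t$ and $\int_{\widehat{\Sigma}_t}\widehat{H}\,d\widehat{\mu}$ to cancel to the required precision.
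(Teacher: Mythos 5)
Your proposal is correct and, in its skeleton, coincides with the paper's proof: both compare $\Sigma_t$ with the radial coordinate sphere $\widehat{\Sigma}_t$ of the same area, use the weighted isoperimetric inequality \eqref{isop-2} to dispose of the Ricci term (this is exactly the paper's inequality \eqref{eq-Wxigeq}), and treat the case $\kappa>0$ via \eqref{es-H} in the same way. The only genuine divergence is the endgame of the case $\kappa=0$. The paper makes no pointwise comparison of $r(\theta,t)$ with $\hat r_t$ at all: using \eqref{es-phi H}, \eqref{es-gij}, \eqref{es-phitwo} and the monotone convergence $\varphi'\nearrow\bar{\varphi}'$, it shows $\liminf_{t\to\infty}|\Sigma_t|^{-\frac{n-1}{n}}\int_{\Sigma_t}H\,d\mu_t\geq n\omega_n^{1/n}\bar{\varphi}'$, while the same normalized quantity on $\widehat{\Sigma}_t$ converges to exactly $n\omega_n^{1/n}\bar{\varphi}'$, which already yields $\lim_{t\to\infty}\mathcal{G}(t)\geq 0$. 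Your finer route (integration by parts and a first-order expansion of $F$) can be made to work, with one caveat: $|D\phi|\leq e^{-\alpha t}$ does \emph{not} force $r(\theta,t)$ to concentrate around $\hat r_t$ in the absolute sense, since $\Phi'=1/\varphi$; together with the area matching $|\Sigma_t|=|\widehat{\Sigma}_t|$ it only gives the relative statement $|r(\theta,t)-\hat r_t|=O(e^{-\alpha t}\varphi)$, equivalently $\varphi(r(\theta,t))=\big(1+O(e^{-\alpha t})\big)\varphi(\hat r_t)$. This still suffices, because $\frac{d}{dr}\big(\varphi'\varphi^{n-1}\big)=O(\varphi^{n-2})$ (boundedness of $\varphi\varphi''$ follows from Assumption \ref{ass-1} (i), (iii) as in the proof of Lemma \ref{Lem-4.1}), so the resulting discrepancy is $O(e^{-\alpha t}\varphi^{n-1})=o\big(|\Sigma_t|^{\frac{n-1}{n}}\big)$ once $(n-1)\varepsilon(t)<\tfrac{\alpha}{2}t$; what your approach buys is a slightly stronger statement (the unnormalized mean-curvature integrals agree to lower order), whereas the paper's version is shorter because it needs only the crude bound \eqref{es-phitwo} and avoids the concentration step entirely.
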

\begin{proof}
 Let $\widehat{\Sigma}_t$ be a family of radial coordinate spheres evolving under the IMCF with $\vert\widehat{\Sigma}_0\vert=\vert\Sigma\vert$, and $\widehat{\Omega}_t$ be the domain bounded by $\widehat{\Sigma}_t$ and $\partial M$. Since $\frac{d}{dt}\vert\Sigma_t\vert=\vert\Sigma_t\vert$, we have
\begin{align}\label{mon2-eq1}
\vert\widehat{\Sigma}_t\vert=\vert\Sigma_t\vert=e^t\vert\Sigma\vert.
\end{align}

First, we consider the case $\kappa>0$. Recall the asymptotic estimate \eqref{es-H} of the mean curvature:
\begin{align}\label{mon2-eq2}
  H=n\sqrt{\kappa}+O(te^{-\frac{2}{n}t}).
\end{align}
Thus, by the weighted isoperimetric inequality \eqref{isop-2}, we obtain
\begin{align}\label{mon2-eq3}
   \mathcal{W}(\Sigma_t)=& \int_{\Sigma_t}{H}\,d\mu_t+\int_{\Omega_t}{\overline{Ric}(\partial_r,\partial_r)}\,dv\nonumber\\
   =&n\sqrt{\kappa}\vert\Sigma_t\vert-n\int_{\Omega_t}\frac{\varphi''}{\varphi}dv+O(te^{-\frac{2}{n}t})\vert\Sigma_t\vert\nonumber\\
    \geq&n\sqrt{\kappa}\vert\Sigma_t\vert-n\xi_{1}(\vert\Sigma_t\vert)+O(te^{-\frac{2}{n}t})\vert\Sigma_t\vert\nonumber\\
    =&n\sqrt{\kappa}\vert\widehat{\Sigma}_t\vert-n\xi_{1}(\vert\widehat{\Sigma}_t\vert)+O(te^{-\frac{2}{n}t})\vert\widehat{\Sigma}_t\vert\nonumber\\
    =&\mathcal{W}(\widehat{\Sigma}_t)+O(te^{-\frac{2}{n}t})\vert\widehat{\Sigma}_t\vert.
\end{align}
Now, by the definition of the function $\xi$ and $\vert\widehat{\Sigma}_t\vert=\vert\Sigma_t\vert$, we have
\begin{align}\label{mon2-eq4}
   \mathcal{G}(t)=& \vert\Sigma_t\vert^{-\frac{n-1}{n}}\left(\mathcal{W}(\Sigma_t)-\xi(\vert\Sigma_t\vert)\right)\nonumber\\
   \geq&\vert\widehat{\Sigma}_t\vert^{-\frac{n-1}{n}}\left(\mathcal{W}(\widehat{\Sigma}_t)-\xi(\vert\widehat{\Sigma}_t\vert)+O(te^{-\frac{2}{n}t})\vert\widehat{\Sigma}_t\vert\right)\nonumber\\
   =&O(te^{-\frac{2}{n}t})\vert\widehat{\Sigma}_t\vert^{\frac{1}{n}}
   =O(te^{-\frac{1}{n}t})\vert\Sigma\vert^{\frac{1}{n}},
\end{align}
where in the second inequality we used \eqref{mon2-eq3} and in the last equality we used \eqref{mon2-eq1}, this completes the proof of case $\kappa>0$.

Next, we consider the case $\kappa=0$. At this time, the asymptotic behavior of the IMCF is different from the case $\kappa>0$.
By the definition and the weighted isoperimetric inequality \eqref{isop-2}, we have
\begin{align}
 \mathcal{W}(\Sigma_t)-\xi(\vert\Sigma_t\vert)
 =&\mathcal{W}(\Sigma_t)-\xi(\vert\widehat{\Sigma}_t\vert)\nonumber\\
 =&\int_{\Sigma_t}{H}\,d\mu_t-n\int_{\Omega_t}{\frac{\varphi''}{\varphi}}\,dv-\int_{\widehat{\Sigma}_t}{H}\,d\mu_t+n\int_{\widehat{\Omega}_t}{\frac{\varphi''}{\varphi}}\,dv\nonumber\\
 =&\int_{\Sigma_t}{H}\,d\mu_t-\int_{\widehat{\Sigma}_t}{H}\,d\mu_t-n(\int_{\Omega_t}{\frac{\varphi''}{\varphi}}\,dv-\xi_{1}(\vert\widehat{\Sigma}_t\vert))\nonumber\\
 \geq&\int_{\Sigma_t}{H}\,d\mu_t-\int_{\widehat{\Sigma}_t}{H}\,d\mu_t.\label{eq-Wxigeq}
\end{align}
Therefore, we see that
\begin{align}\label{mon2-eq5}
\mathcal{G}(t)=& \vert\Sigma_t\vert^{-\frac{n-1}{n}}\left(\mathcal{W}(\Sigma_t)-\xi(\vert\Sigma_t\vert)\right)\nonumber\\
\geq&\vert\Sigma_t\vert^{-\frac{n-1}{n}}\int_{\Sigma_t}{H}\,d\mu_t-\vert\widehat{\Sigma}_t\vert^{-\frac{n-1}{n}}\int_{\widehat{\Sigma}_t}{H}\,d\mu_t.
\end{align}
Now, we investigate the limit of the quantity $\vert\Sigma_t\vert^{-\frac{n-1}{n}}\int_{\Sigma_t}{H}\,d\mu_t$. By the estimates \eqref{es-phi H} and \eqref{es-gij}, we have
\begin{align}
|\Sigma_t|^{\frac{n-1}{n}}&=\left(\int_{\mathbb{S}^n}\varphi^n\,dvol_{\mathbb{S}^n}\right)^{\frac{n-1}{n}}\left(1+O(e^{-2\alpha t})\right),\label{es-Sigma}\\
\int_{\Sigma_t}{H}\,d\mu_t&=n\int_{\mathbb{S}^n}{\varphi'\varphi^{n-1}}\,dvol_{\mathbb{S}^n}\left(1+O(e^{-\alpha t})\right).\label{es-inteH}
\end{align}
 Then we calculate as follows:
\begin{align}
\liminf_{t\to\infty}\vert\Sigma_t\vert^{-\frac{n-1}{n}}\int_{\Sigma_t}{H}\,d\mu_t&\geq 
n\liminf_{t\to\infty}\frac{\int_{\mathbb{S}^n}\varphi'\varphi^{n-1}dvol_{\mathbb{S}^n}}{(\int_{\mathbb{S}^n}\varphi^{n}dvol_{\mathbb{S}^n})^{\frac{n-1}{n}}}\notag\\
&\geq n\omega_n^{\frac{1}{n}}\bar{\varphi}'\liminf_{t\to\infty}e^{-2(n-1)\varepsilon(t)}\notag\\
&=n\omega_n^{\frac{1}{n}}\bar{\varphi}',\label{mon2-eq7}
\end{align}
where we have used \eqref{es-phitwo} in the second inequality. On the other hand, since $\widehat{\Sigma}_t$ are radial coordinate spheres, on $\widehat{\Sigma}_t$ we have
\begin{align*}
\vert\widehat{\Sigma}_t\vert=\omega_n\varphi^n(\hat{r}(t)),\quad \int_{\widehat{\Sigma}_t}{H}\,d\mu_t=n\omega_n\varphi'(\hat{r}(t))\varphi^{n-1}(\hat{r}(t)),
\end{align*}
where $\hat{r}(t)$ is the radius of $\widehat{\Sigma}_t$, i.e. $\widehat{\Sigma}_t=\{\hat{r}(t)\}\times\mathbb{S}^n$. Therefore
\begin{equation}\label{eq-Cs}
    \lim_{t\to\infty}\vert\widehat{\Sigma}_t\vert^{-\frac{n-1}{n}}\int_{\widehat{\Sigma}_t}{H}\,d\mu_t=n\omega^{\frac{1}{n}} \lim_{t\to\infty}\varphi'(\hat{r}(t))=n\omega^{\frac{1}{n}}\bar{\varphi}'.
\end{equation}
Combining \eqref{mon2-eq7} and \eqref{eq-Cs} with \eqref{mon2-eq5}, we conclude that $\lim_{t\to\infty}\mathcal{G}(t)\geq0$ in this case.
\end{proof}

Applying Proposition \ref{monprop} and Proposition \ref{limprop}, we can complete the proof of Theorem \ref{Min-In1} now. Since $\mathcal{G}(t)$ is monotone nonincreasing in time $t$, we have
\begin{align*}
    \mathcal{G}(0)\geq\lim_{t\to\infty}\mathcal{G}(t)\geq0,
\end{align*}
Thus, for the strictly mean convex and star-shaped hypersurface $\Sigma$, we obtain
\begin{align}\label{mon2-eq9}
\int_{\Sigma}Hd\mu+\int_{\Omega}{\overline{Ric}(\partial_r,\partial_r)}\,dv\geq\xi(\vert\Sigma\vert).
\end{align}
If the equality holds in the above inequality, then $\mathcal{G}(t)\equiv0$. Since $\mathcal{G}'(0)=0$, it follows from Proposition \ref{monprop} that either: (a) $\Sigma$ is a radial coordinate sphere in $M$, or (b) there exists a radial ball $B(R)$(may be empty) such that $M\setminus B(R)$ is isometric to a space form of constant nonpositive curvature and  $\Sigma$ is a geodesic sphere in it.

Next, we assume that $\Sigma$ is a weakly mean convex and star-shaped hypersurface. By the argument of Li and Wei \cite[Lemma 3.11]{LW17}, we can approximate $\Sigma$ by a family of strictly mean convex and 
star-shaped hypersurfaces using the mean curvature flow. Then the inequality \eqref{mon2-eq9} follows from the approximation. Now we adapt the idea of \cite{G-L2009} to treat the equality case. Assume that $\Sigma$ is weakly mean convex with equality in \eqref{mon2-eq9} attained. Let 
$$\Sigma_+=\{p\in \Sigma|H(p)>0\},$$ 
since there exists at least one elliptic point on $\Sigma$ (see \cite[Lemma 2.1]{LWX-14}), $\Sigma_+$ is open and nonempty. We claim that $\Sigma_+$ is closed. This would implies $\Sigma=\Sigma_+$, so $\Sigma$ is strictly mean convex and we conclude that it is either a radial coordinate sphere, or is isometric to a geodesic sphere in the space form of nonpositive curvature. 

We now prove that $\Sigma_+$ is closed. Pick any $\varpi\in C_0^2(\Sigma_+)$ 
compactly supported in $\Sigma_+$. Let $\Sigma_s$ be the hypersurface determined by $X_s=\exp_{X}{(s\varpi\nu)}$, where $\nu$ is the unit outward normal of $\Sigma$ at $X$. Let $\Omega_s$ be the domain enclosed by $\Sigma_s$ and $\partial M$. It is easy to show $\Sigma_s$ is weakly mean convex and star-shaped when $s$ is small enough. 
Define 
\[\mathcal{L}(\Sigma_s)=\int_{\Sigma_s}Hd\mu+\int_{\Omega_s}{\overline{Ric}(\partial_r,\partial_r)}\,dv-\xi(\vert\Sigma_s\vert).\]
Therefore $\mathcal{L}(\Sigma_s)\geq\mathcal{L}(\Sigma)=0$ for $s$ small, which implies
\begin{align*}
    \frac{d}{ds}\mathcal{L}(\Sigma_s)|_{s=0}=0.
\end{align*}
Simple calculation using Proposition \ref{Prop-general-evo} yields 
\begin{align*}
  \frac{d}{ds}\mathcal{L}(\Sigma_s)|_{s=0}=&\int_{\Sigma}\left(H^2-|A|^2+\overline{Ric}(\partial_r,\partial_r)-\overline{Ric}(\nu,\nu)-\xi'(\vert\Sigma\vert)H\right)\varpi d\mu=0  
\end{align*}
for all $\varpi\in C_0^2(\Sigma_+)$. Thus, 
\begin{align*}
    H^2(p)-|A|^2(p)+\overline{Ric}(\partial_r,\partial_r)(p)-\overline{Ric}(\nu,\nu)(p)-\xi'(\vert\Sigma\vert)H(p)=0, \ \forall p\in\Sigma_+.
\end{align*}
Combining $\overline{Ric}(\partial_r,\partial_r)-\overline{Ric}(\nu,\nu)\leq0$ with Cauchy-Schwarz inequality $|A|^2\geq H^2/n$, we obtain
\begin{align*}
    H(p)\geq\frac{n}{n-1}\xi'(\vert\Sigma\vert)>0, \forall p\in\Sigma_+,
\end{align*}
where the second inequality follows from \eqref{mon-eq9} and hence $\Sigma_+$ is closed. This completes the proof of Theorem \ref{Min-In1}, and then Corollary \ref{Min-InD} follows immediately.
\subsection{Further discussions on Riemannian warped products} In this subsection, we briefly discuss the generalization of our Theorem \ref{Min-In1} to general Riemannian warped products, which need not be warped cylinders. 


Assume that $(V,g_V)$ is a $n$-dimensional closed Riemannian manifold. Fixing $r_0\geq 0$ and let $\lambda=\lambda(r)$ be a smooth positive function defined on the interval $[r_0,\infty)$. Let $(N^{n+1},\bar{g})=[r_0,\infty)\times V^n$ endowed with the following warped product structure
\begin{equation}
    \bar{g}=dr^2+\lambda^2 g_V,\quad r\in[r_0,\infty),
\end{equation}
which satisfies
\begin{align}
    \text{Ric}_V\geq& (n-1)Kg_V,\label{con-Ric}\\
    0\leq (\lambda')^2&-\lambda''\lambda\leq K,\label{con-lambda}
\end{align}
where $K>0$ is a positive constant. Guan, Li and Wang \cite{GLW-2019} proved that if $\Sigma\subset N^{n+1}$ is star-shaped, then the flow hypersurfaces $\Sigma_t$ of the mean curvature type flow
\begin{equation}\label{flow-mean}
    \partial_t X=(n\lambda'-uH)\nu
\end{equation}
starting from $\Sigma$ remains to be star-shaped, exists for all time $t\in[0,\infty)$ and converges exponentially to a radial coordinate sphere as $t\to\infty$. 

Moreover, they showed that $|\Sigma_t|$ is nonincreasing along the flow \eqref{flow-mean}.  By a direct calculation, if $\partial_r\left(\frac{\lambda''}{\lambda}\right)\leq 0$, then the weighted volume $\int_{\Omega_t}{\frac{\lambda''}{\lambda}}\,dv$ is nondecreasing along the flow \eqref{flow-mean}. Thus we can also deduce the inequality \eqref{isop-2}. Then by adding adequate asymptotic conditions to their assumptions \eqref{con-Ric} and \eqref{con-lambda}, we can use \cite[Theorem 1.3]{Scheuer19} to establish the Minkowski inequality \eqref{Mintype} in general Riemannian warped products. The proof is essentially the same as the one we have shown in \S\ref{Sec-4} above. Hence we present an version here without proof and expect it can be improved by weaken the conditions. 
\begin{assu}\label{assG}
    Let $(V,g_V)$ be an $n$-dimensional closed Riemannian manifold and $\lambda=\lambda(r)$ be a smooth positive function defined on the interval $[r_0,\infty)$. Assume that $(N^{n+1},\bar{g})=[r_0,\infty)\times_{\lambda} V^n$ such that $\bar{g}=dr^2+\lambda^2 g_V$. We suppose that the following conditions hold:
    \begin{itemize}
        \item[(i)] $V$ has nonnegative sectional curvature and $\lambda'>0, \lambda''\geq 0$,
        \item[(ii)] $\text{Ric}_V\geq (n-1)Kg_V$,
        \item[(iii)] $\lambda\lambda''\leq(\lambda')^2\leq K+c\lambda^2$,
        \item[(iv)]$\partial_r\left(\frac{\lambda''}{\lambda}\right)\leq 0$,
    \end{itemize} 
    where $K>0$ is a positive constant and $c=\lim_{r\to\infty}\frac{\lambda''}{\lambda}\geq 0$.
\end{assu}
\begin{thm}\label{thm-gene}
    Let $N^{n+1}$ be a Riemannian warped product given in Assumption \ref{assG}, then the Minkowski inequality \eqref{Mintype} holds for any weak mean convex and star-shaped hypersurface $\Sigma$ in $N^{n+1}$. 
\end{thm}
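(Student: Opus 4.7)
The plan is to follow the inverse mean curvature flow strategy of \S\ref{Sec-4}, adapting each ingredient from the warped cylinder setting to the Riemannian warped product $(N^{n+1},\bar g)=[r_0,\infty)\times_{\lambda} V^n$. First, I would verify that Assumption \ref{assG} implies the growth hypotheses \eqref{SA-1}-\eqref{SA-2} of Theorem \ref{Scheuer-es}: conditions (iii) and (iv) force $\lambda''\lambda/(\lambda')^2$ to converge and $\lambda'''\lambda/(\lambda'\lambda'')$ to be bounded exactly as in the proof of Lemma \ref{Lem-4.1}, since the only step there that used rotational symmetry of the base was the explicit form of $\varphi$. Starting from a strictly mean convex, star-shaped $\Sigma$, Theorem \ref{Scheuer-es} (as extended in \cite{Scheuer19} to general base $V$) then provides a unique smooth IMCF $\Sigma_t$ for all $t\ge 0$ with the umbilicity estimate \eqref{S-es}. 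Define $\mathcal{W}(\Sigma_t)$ and the monotone candidate $\mathcal{G}(t)$ exactly as in \eqref{defn-W}-\eqref{defn-G}, with $\varphi$ replaced by $\lambda$.

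The second step is the monotonicity $\mathcal{G}'(t)\le 0$. Proposition \ref{Prop-general-evo} yields the same evolution identity as \eqref{mon-eq2}; Cauchy-Schwarz $|A|^2\ge H^2/n$ reduces everything to the pointwise inequality $\overline{Ric}(\partial_r,\partial_r)-\overline{Ric}(\nu,\nu)\le 0$. In a warped product, decomposing $\nu=\alpha\,\partial_r+W$ with $W\in TV$, $u=\lambda\alpha$, and using the standard warped-product Ricci formulas (together with $\overline{Ric}(\partial_r,W)=0$), one computes
\[
\overline{Ric}(\partial_r,\partial_r)-\overline{Ric}(\nu,\nu)=(n-1)\frac{(\lambda')^2-\lambda\lambda''}{\lambda^2}\Bigl(1-\frac{u^2}{\lambda^2}\Bigr)-\text{Ric}_V(W,W),
\]
and conditions (ii)-(iii) of Assumption \ref{assG} make the right-hand side nonpositive (the $V$-Ricci lower bound absorbs the possibly positive first term since $(\lambda')^2-\lambda\lambda''\le K$). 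For the $\xi$-piece, I need the weighted isoperimetric bound $\int_{\Omega_t}(\lambda''/\lambda)\,dv\le\xi_1(|\Sigma_t|)$; condition (iv) supplies monotonicity of $\lambda''/\lambda$ in $r$, while condition (i) on $V$ together with (iii) places us inside the range treated in Chodosh's thesis \cite{Cho15}, giving the sharp isoperimetric inequality for sets containing the horizon. The slicing argument of Theorem \ref{thm-isop} then produces the weighted version, and the ODE identity \eqref{mon-eq9} characterizing $\xi$ yields $\mathcal{G}'(t)\le 0$ verbatim.

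Third, I would show $\lim_{t\to\infty}\mathcal{G}(t)\ge 0$ by splitting into $c>0$ and $c=0$, in parallel with $\kappa>0$ and $\kappa=0$ of Proposition \ref{limprop}. When $c>0$, the umbilicity estimate combined with $(\lambda')^2\le K+c\lambda^2$ yields $H=n\sqrt{c}+O(te^{-2t/n})$ and the weighted isoperimetric inequality is asymptotically saturated by the concentric sphere, giving the analogue of \eqref{mon2-eq4}. When $c=0$, one uses $|\lambda H-n\lambda'|=O(e^{-\alpha t})$ and the graph estimate $|D\phi|\le e^{-\alpha t}$ from \cite{Scheuer19} to reduce $|\Sigma_t|^{-(n-1)/n}\int_{\Sigma_t}H\,d\mu$ to its value on the concentric sphere of the same area up to an $O(e^{-\alpha t})$ error, as in \eqref{mon2-eq5}-\eqref{eq-Cs}, where $\bar\lambda':=\lim_{r\to\infty}\lambda'(r)$ exists and is positive by (i) and (iii). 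Combining the monotonicity with $\lim\mathcal{G}(t)\ge 0$ delivers \eqref{Mintype} for strictly mean convex star-shaped $\Sigma$. The passage to weakly mean convex $\Sigma$ is via the mean curvature flow approximation of \cite[Lemma 3.11]{LW17}, and the equality analysis uses a compactly supported variation on $\Sigma_+=\{H>0\}$: the Euler-Lagrange equation together with $|A|^2\ge H^2/n$ and the curvature inequality forces $H\ge\frac{n}{n-1}\xi'(|\Sigma|)>0$ on $\Sigma_+$, hence $\Sigma=\Sigma_+$.

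The main obstacle is verifying the weighted isoperimetric inequality outside the rotationally symmetric case: the Bray-Chodosh comparison machinery relies on spherical rearrangement, and while \cite{Cho15} does cover more general warped products, one must check that Assumption \ref{assG} (in particular the nonnegative sectional curvature of $V$ and the two-sided control $\lambda\lambda''\le(\lambda')^2\le K+c\lambda^2$) really falls within its hypotheses and that radial coordinate spheres remain the unique minimizers of the isoperimetric profile among hypersurfaces homologous to $\{r_0\}\times V$. Once that is settled, every other step is a faithful transcription of \S\ref{Sec-4} under the substitutions $\varphi\rightsquigarrow\lambda$ and $\kappa\rightsquigarrow c$.
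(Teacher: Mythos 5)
Your overall IMCF scheme (monotonicity of $\mathcal{G}$ via $|A|^2\geq H^2/n$ and $\overline{Ric}(\partial_r,\partial_r)-\overline{Ric}(\nu,\nu)\leq 0$, the asymptotic lower bound on $\lim_{t\to\infty}\mathcal{G}(t)$ from \cite{Scheuer19}, and the approximation/variation argument for weak mean convexity) is exactly the route the paper intends; in particular your computation of the Ricci difference in the warped product, with $\mathrm{Ric}_V\geq(n-1)Kg_V$ absorbing the term $(n-1)\bigl((\lambda')^2-\lambda\lambda''\bigr)\lambda^{-2}(1-u^2/\lambda^2)$ because $(\lambda')^2-\lambda\lambda''\leq K$ follows from Assumption \ref{assG} (iii)--(iv), is the right verification of the pointwise curvature inequality.

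The genuine gap is the ingredient you yourself flag as ``the main obstacle'': the weighted volume bound $\int_{\Omega_t}\frac{\lambda''}{\lambda}\,dv\leq\xi_1(|\Sigma_t|)$. You propose to get it from Chodosh's isoperimetric comparison \cite{Cho15}, but that machinery is a Bray--Morgan type symmetrization argument whose scope, even in its general form, hinges on the cross-sections being model isoperimetric hypersurfaces of a rotationally symmetric comparison space; for an arbitrary closed base $V$ with merely nonnegative sectional curvature and $\mathrm{Ric}_V\geq(n-1)Kg_V$ there is no such result to quote, and leaving it as an unchecked hypothesis leaves the proof incomplete at its most essential point. The paper closes this differently and more cheaply: it does not prove a sharp isoperimetric inequality in $N^{n+1}$ at all, but instead invokes the Guan--Li--Wang locally constrained mean curvature type flow $\partial_tX=(n\lambda'-uH)\nu$ of \cite{GLW-2019}, which under \eqref{con-Ric}--\eqref{con-lambda} (equivalent to Assumption \ref{assG} (ii)--(iii) together with $\lambda''/\lambda\geq c$) starts from any star-shaped hypersurface, exists for all time, and converges to a radial coordinate slice, while $|\Sigma_t|$ is nonincreasing and, thanks to condition (iv), $\int_{\Omega_t}\frac{\lambda''}{\lambda}\,dv$ is nondecreasing. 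This yields \eqref{isop-2} directly for star-shaped hypersurfaces, which is all the IMCF argument needs since star-shapedness is preserved. So either replace your appeal to \cite{Cho15} by this flow argument, or supply a genuine proof that coordinate slices solve the (weighted) isoperimetric problem over a general base $V$; as written, that step does not follow from the cited results. (A minor remark: Theorem \ref{thm-gene} claims no rigidity statement, so your equality-case discussion, while harmless, is not required.)
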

\begin{rem}
    Compared Assumption \ref{assG} with Assumption \ref{ass-1}, we see that in the case of warped cylinders, the constraints in Assumption \ref{assG} is stronger than Assumption \ref{ass-1} by raising the lower bound of $(\lambda')^2$ from $0$ to $\lambda\lambda''$. This leads to some bad consequences. For example, let $\Sigma$ be a weakly mean convex and star-shaped hypersurface in the deSitter-Schwarzschild manifold, Theorem \ref{thm-gene} does not work if $\Sigma$ is close to the horizon $\{0\}\times\mathbb{S}^n$, while Theorem \ref{Min-In1} can deal with this case.
\end{rem}

\section{Proof of Theorem \ref{Min-In2} and Theorem \ref{Thm-Ads}}\label{Sec-5}
In this section, we adapt the idea of \cites{H-2024,Wei18} to complete the proof of Theorem \ref{Min-In2} and Theorem \ref{Thm-Ads}. We firstly deal with the Schwarzschild case. Under our definition of metric \eqref{metric-D} with $\kappa=0$, the Schwarzschild manifold can be seen as
\begin{equation}\label{con-model}
    M^{n+1}=\{x\in\mathbb{R}^{n+1}:|x|\geq \left(\frac{m}{4}\right)^{\frac{1}{n-1}}\},\quad \bar{g}_{ij}(x)=\left(1+\frac{m}{4|x|^{n-1}}\right)^{\frac{4}{n-1}}\delta_{ij}(x).
\end{equation}
Here $\delta_{ij}$ is the canonical Euclidean metric and $|x|$ is the norm of $x$ with respect to the metric $\delta_{ij}$. Consider a map $\mathcal{J}:\mathbb{R}^{n+1}\setminus{\{0\}}\to \mathbb{R}^{n+1}\setminus{\{0\}}$ given by
\begin{equation*}
    \mathcal{J}(x)=\left(\frac{m}{4}\right)^{\frac{2}{n-1}}\cdot \frac{x}{|x|^2},\,\, \forall x\in \mathbb{R}^{n+1}\setminus{\{0\}}.
\end{equation*}
It can been shown that the map $\mathcal{J}$ is a reflection map across the Euclidean sphere $\{x\in\mathbb{R}^{n+1}:|x|=\left(\frac{m}{4}\right)^{\frac{1}{n-1}}\}$ and hence gives an isometry of $\mathbb{R}^{n+1}\setminus{\{0\}}$. Then the double Schwarzschild manifold $(\widehat{M}^{n+1},\widehat{g})$ can be obtained by the combining the Schwarzschild manifold with its image under the reflection map $\mathcal{J}$, which is complete and boundaryless.

We agree on some notations under the model \eqref{con-model} of the Schwarzschild manifold for later use in this section. We set $r_0=\left(\frac{m}{4}\right)^{\frac{1}{n-1}}$ and denote the coordinate centered at the origin of $\mathbb{R}^{n+1}$ as $S(r)$, hence the horizon $\partial M=S(r_0)$. $B(r)$ is denoted as the closure of the domain bounded by $\partial M$ and $S(r)$ for any $r\geq r_0$, while $D(r)$ represents the closed ball centered at the origin of $\mathbb{R}^{n+1}$ with radius $r$ for $r\geq 0$. The volume of a domain $\Omega$ under the metric $\bar{g}$ is denoted as $\mathrm{Vol}(\Omega)$, and the area of a hypersurface $\Sigma$ under the the metric $\bar{g}$ is denoted as $|\Sigma|$ if no ambiguity.

Firstly, we have the following integral formula.
\begin{lem}\label{Lem-5.1}
Suppose that $\Omega$ is a smooth bounded domain in $(M^{n+1},\bar{g})$ and $\Sigma=\partial\Omega\setminus{\partial M}$. Let $u:\Omega^{c}\to\mathbb{R}_{+}$ be a smooth proper function with $u|_{\Sigma}=0$. Let $t>0$, $\Omega_t=\{u< t\}$ and $\Phi:(0,t)\to\mathbb{R}_{+}$ be Lipschitz and compactly supported in $(0,t)$. Then $\zeta=\Phi\circ u:\Omega_t\to\mathbb{R}_{+}$ satisfies:
  \begin{equation}\label{Inte-formula}
      -\int_{\Omega_t}{\nabla\zeta\cdot\nu H}\,dv=\int_{\Omega_t}{\zeta(H^2-|A|^2-\overline{Ric}(\nu,\nu))}\,dv.
  \end{equation}
\end{lem}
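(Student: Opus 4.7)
The plan is to prove this as an integration by parts identity for the foliation by level sets of $u$, using the coarea formula. First I would rewrite $\nabla\zeta = \Phi'(u)\nabla u = \Phi'(u)|\nabla u|\nu$ so that $\nabla\zeta\cdot\nu H = \Phi'(u)|\nabla u|H$, and then apply the coarea formula $dv = |\nabla u|^{-1}\,d\sigma\,ds$ to turn the left-hand side into
\[
-\int_{\Omega_t}\nabla\zeta\cdot\nu H\,dv = -\int_0^t \Phi'(s)\int_{\{u=s\}} H\,d\sigma\,ds.
\]
Since $\Phi$ is compactly supported in $(0,t)$, integration by parts in the $s$-variable produces no boundary contributions and yields
\[
\int_0^t \Phi(s)\,\frac{d}{ds}\!\left[\int_{\{u=s\}} H\,d\sigma\right] ds.
\]

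The key step is then to evaluate $\frac{d}{ds}\int_{\{u=s\}}H\,d\sigma$. I would view the regular level sets of $u$ (which form a smooth foliation away from a measure-zero set of singular values, by Sard's theorem) as a family of closed hypersurfaces evolving under the flow $\partial_s X = \mathcal{F}\nu$ with normal speed $\mathcal{F} = 1/|\nabla u|$. Applying Proposition \ref{Prop-general-evo} to this speed gives $\partial_s\,d\sigma = \mathcal{F}H\,d\sigma$ and $\partial_s H = -\Delta_{\Sigma}\mathcal{F} - \mathcal{F}(|A|^2+\overline{Ric}(\nu,\nu))$, so
\[
\frac{d}{ds}\int_{\{u=s\}}H\,d\sigma = \int_{\{u=s\}}\!\Big[\mathcal{F}H^2 - \Delta_{\Sigma}\mathcal{F} - \mathcal{F}\bigl(|A|^2+\overline{Ric}(\nu,\nu)\bigr)\Big]d\sigma.
\]
The $\Delta_{\Sigma}\mathcal{F}$ term drops out upon integration over the closed level set by the divergence theorem on $\{u=s\}$.

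Substituting this simplified expression back into the outer $s$-integral and applying the coarea formula in reverse then recovers
\[
\int_0^t \Phi(s)\int_{\{u=s\}}\frac{H^2-|A|^2-\overline{Ric}(\nu,\nu)}{|\nabla u|}\,d\sigma\,ds = \int_{\Omega_t}\zeta\bigl(H^2-|A|^2-\overline{Ric}(\nu,\nu)\bigr)dv,
\]
which is precisely the right-hand side of \eqref{Inte-formula}. The only technical worry is the behavior at critical points of $u$, where $\nu$ and $H$ are not defined; however, the critical values form a set of measure zero in $(0,t)$ by Sard, both sides of the claimed identity are integrals, and the outer $s$-integral does not see the singular values, so the foliation-based computation is valid almost everywhere and delivers the identity as stated. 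I do not expect a genuine obstacle here: everything is a direct consequence of the coarea formula combined with the standard evolution equations of Proposition \ref{Prop-general-evo}.
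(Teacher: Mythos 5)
Your overall strategy is exactly the computation behind the proof that the paper omits (it defers to Wei's Lemma~4.1, which goes back to Huisken--Ilmanen): write $\nabla\zeta\cdot\nu H=\Phi'(u)|\nabla u|H$, use the coarea formula to reduce to the level sets $\{u=s\}$, integrate by parts in $s$ (no boundary terms since $\Phi$ has compact support in $(0,t)$), compute $\frac{d}{ds}\int_{\{u=s\}}H\,d\sigma$ from the variational formulas of Proposition~\ref{Prop-general-evo} with speed $\mathcal{F}=1/|\nabla u|$, kill the $\Delta_{\Sigma}\mathcal{F}$ term on the closed level sets, and convert back with coarea. When $\nabla u\neq 0$ on $\{0<u<t\}$ (which is the situation in which the lemma is actually used downstream, on elliptic-regularized solutions whose effective gradient is bounded below, and which also guarantees that the level sets for $s\in(0,t)$ are closed hypersurfaces), this argument is complete and correct, and Sard's theorem is not needed at all.

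The step that is not justified as written is your last paragraph, where critical points are dismissed by Sard. Sard gives that the set of critical \emph{values} is Lebesgue-null in $(0,t)$, but that is not what your argument requires: the integration by parts $-\int_0^t\Phi'(s)G(s)\,ds=\int_0^t\Phi(s)G'(s)\,ds$, with $G(s)=\int_{\{u=s\}}H\,d\sigma$, needs $G$ to be absolutely continuous on $\operatorname{supp}\Phi$, and knowing that $G$ is differentiable with the asserted derivative on the open set of regular values does not exclude jumps or Cantor-type singular variation of $G$ concentrated precisely on the null set of critical values, so ``the outer $s$-integral does not see the singular values'' is not a proof of that step. Moreover, if $\{\nabla u=0\}\cap\{0<u<t\}$ has positive volume (a plateau of $u$), then $\nu$, $H$ and $|A|$ are undefined on a set of positive measure and the right-hand side of \eqref{Inte-formula} is not even well defined, so the statement tacitly excludes this situation rather than your argument covering it. The clean fix is either to add (or make explicit) the hypothesis $\nabla u\neq 0$ on the region of integration, under which your proof stands as is, or, if you insist on general smooth proper $u$, to actually prove the absolute continuity of $G$ across critical values (e.g.\ by excising small neighborhoods of the critical set and controlling the resulting boundary contributions), which your write-up does not do.
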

\begin{proof}
   The proof is similar as \cite[Lemma 4.1]{Wei18}, hence we omit it here.
\end{proof}
\subsection{Schwarzschild case I: $\Sigma$ is homologous to the horizon}\label{sub-5.1}
\begin{lem}\label{Lem-mohh}
  Let $\Omega$ be a smooth bounded domain in the Schwarzschild manifold $(M^{n+1},\bar{g})$. Suppose that the boundary $\partial\Omega=\Sigma\cup\partial M$ and $\Sigma$ is outward minimizing. Let $\Omega_t$ be the weak solution of IMCF in $\Omega^{c}=M\setminus\Omega$ with initial data $\Omega$. Denote $\Sigma_t=\partial\Omega_t\setminus{\partial M}$, then for all $0\leq \bar{t}<t$, we have
  \begin{equation}\label{Weak-Mono1}
      \mathcal{W}(\Sigma_t)-\xi(\vert\Sigma_t\vert)\leq \mathcal{W}(\Sigma_{\bar{t}})-\xi(\vert\Sigma_{\bar{t}}\vert)+\frac{n-1}{n}\int_{\bar{t}}^{t}{\left[\mathcal{W}(\Sigma_s)-\xi(\vert\Sigma_s\vert)\right]}\,ds.
  \end{equation}
\end{lem}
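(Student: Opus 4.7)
The plan is to adapt the smooth monotonicity argument of Proposition \ref{monprop} to the weak setting, following the strategy pioneered by Huisken--Ilmanen and used by Wei \cite{Wei18}. The differential form that we want to integrate is essentially
\begin{equation*}
\frac{d}{ds}\bigl(\mathcal{W}(\Sigma_s)-\xi(|\Sigma_s|)\bigr)\leq \frac{n-1}{n}\bigl(\mathcal{W}(\Sigma_s)-\xi(|\Sigma_s|)\bigr),
\end{equation*}
and \eqref{Weak-Mono1} will follow by integrating from $\bar t$ to $t$. The weak solution has enough structure for each of the three contributions ($\int_{\Sigma_s}H\,d\mu$, $\int_{\Omega_s}\overline{Ric}(\partial_r,\partial_r)\,dv$, and $\xi(|\Sigma_s|)$) to be controlled individually, and then the inequality is assembled.

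First, I would record the basic weak-IMCF facts from Theorem \ref{Prop-Weak}: since $\Sigma$ is outward minimizing and homologous to $\partial M$, each $\Omega_s$ is a minimizing hull, $|\Sigma_s|=e^s|\Sigma|$ is a smooth function of $s$, and $H=|\nabla u|$ a.e.\ on $\Sigma_s$ for a.e.\ $s$. Then I would apply Lemma \ref{Lem-5.1} with $\zeta=\Phi\circ u$ for $\Phi$ supported in $(\bar t-\varepsilon, t+\varepsilon)$ approximating $\chi_{[\bar t,t]}$. Writing $\nabla\zeta\cdot\nu=\Phi'(u)|\nabla u|$ and using the coarea formula $dv=|\nabla u|^{-1}\,d\mu_s\,ds$, the identity of Lemma \ref{Lem-5.1} becomes
\begin{equation*}
-\int_0^\infty\Phi'(s)\int_{\Sigma_s}H\,d\mu\,ds=\int_0^\infty\Phi(s)\int_{\Sigma_s}\frac{H^2-|A|^2-\overline{Ric}(\nu,\nu)}{H}\,d\mu\,ds.
\end{equation*}
Passing $\Phi\to\chi_{[\bar t,t]}$ (as is standard in the Huisken--Ilmanen framework, using the $C^{1,\alpha}$ convergence \eqref{C-1beta-con} and the local Lipschitz control of $u$) and invoking the Cauchy--Schwarz bound $|A|^2\geq H^2/n$ yields
\begin{equation*}
\int_{\Sigma_t}H\,d\mu-\int_{\Sigma_{\bar t}}H\,d\mu\leq \int_{\bar t}^t\int_{\Sigma_s}\Bigl(\frac{n-1}{n}H-\frac{\overline{Ric}(\nu,\nu)}{H}\Bigr)\,d\mu\,ds.
\end{equation*}

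Next, the coarea formula directly gives
\begin{equation*}
\int_{\Omega_t}\overline{Ric}(\partial_r,\partial_r)\,dv-\int_{\Omega_{\bar t}}\overline{Ric}(\partial_r,\partial_r)\,dv=\int_{\bar t}^t\int_{\Sigma_s}\frac{\overline{Ric}(\partial_r,\partial_r)}{H}\,d\mu\,ds.
\end{equation*}
Adding the two and using $\overline{Ric}(\partial_r,\partial_r)-\overline{Ric}(\nu,\nu)\leq 0$ from \eqref{mon-eq11} (which applies since the Schwarzschild manifold satisfies Assumption \ref{ass-1} by Remark \ref{Rem-D}), I obtain
\begin{equation*}
\mathcal{W}(\Sigma_t)-\mathcal{W}(\Sigma_{\bar t})\leq \frac{n-1}{n}\int_{\bar t}^t\int_{\Sigma_s}H\,d\mu\,ds=\frac{n-1}{n}\int_{\bar t}^t\Bigl[\mathcal{W}(\Sigma_s)-\int_{\Omega_s}\overline{Ric}(\partial_r,\partial_r)\,dv\Bigr]\,ds.
\end{equation*}

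Finally, since $\frac{d}{ds}|\Sigma_s|=|\Sigma_s|$ and $\Sigma_s$ is a $C^{1,\alpha}$ hypersurface homologous to the horizon, the ODE \eqref{mon-eq9} for $\xi$ together with the weighted isoperimetric inequality \eqref{isop-2} gives
\begin{equation*}
\frac{d}{ds}\xi(|\Sigma_s|)=\frac{n-1}{n}\bigl(\xi(|\Sigma_s|)+n\xi_1(|\Sigma_s|)\bigr)\geq \frac{n-1}{n}\Bigl(\xi(|\Sigma_s|)-\int_{\Omega_s}\overline{Ric}(\partial_r,\partial_r)\,dv\Bigr),
\end{equation*}
and integrating on $[\bar t,t]$ yields a lower bound for $\xi(|\Sigma_t|)-\xi(|\Sigma_{\bar t}|)$ that, when subtracted from the previous display, produces \eqref{Weak-Mono1}. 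The main obstacle is the low regularity in the first step: Lemma \ref{Lem-5.1} is formally stated for smooth $u$, so I need to justify its application to the weak IMCF by an approximation argument (smoothing $\Phi$, using the uniform $C^{1,\alpha}$ estimates and the fact that $H=|\nabla u|$ weakly with $|\nabla u|\in L^\infty_{\mathrm{loc}}$), and to verify that $s\mapsto \int_{\Sigma_s}H\,d\mu$ and $s\mapsto \int_{\Omega_s}\overline{Ric}(\partial_r,\partial_r)\,dv$ are measurable so that Fubini applies. The remaining steps are algebraic consequences of earlier results in the paper.
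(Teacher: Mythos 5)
Your skeleton is the same as the paper's: Lemma \ref{Lem-5.1} plus the coarea formula and $|A|^2\geq H^2/n$ give the Heintze--Karcher-type growth estimate for $\int_{\Sigma_s}H\,d\mu$, the radial Ricci comparison $\overline{Ric}(\partial_r,\partial_r)\leq\overline{Ric}(\nu,\nu)$ absorbs the bulk term into $\mathcal{W}$, and the ODE \eqref{mon-eq9} for $\xi$ together with the weighted isoperimetric inequality \eqref{isop-2} (valid here because each $\Sigma_s$ is homologous to the horizon) handles $\xi(|\Sigma_s|)$. However, there is a genuine gap: this test-function argument only yields the inequality for \emph{almost every} pair $0<\bar t<t$. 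At a jump time the weak flow fattens, $\Omega_{\bar t}\subsetneq\Omega_{\bar t}'$ with positive measure, and the limit of your computation as $\Phi\to\chi_{[\bar t,t]}$ produces the strictly minimizing hull slice $\Sigma_{\bar t}'$ (respectively $\Sigma_t$) at the endpoints, not $\Sigma_{\bar t}$. Since the lemma is claimed for \emph{all} $0\leq\bar t<t$, and the case $\bar t=0$ (the given smooth $\Sigma$) is exactly what is needed later to conclude $\mathcal{G}(0)\geq\lim_{t\to\infty}\mathcal{G}(t)$, you cannot dismiss this as "standard". The paper devotes the second half of its proof to it: one takes $t_i\searrow\bar t$ (and $t_i\nearrow t$), uses the local $C^{1,\beta}$ convergence \eqref{C-1beta-con} plus the Riesz representation theorem to get $\int_{\Sigma_{t_i}}H\,d\mu_{t_i}\to\int_{\Sigma_{\bar t}'}H\,d\mu_{\bar t}'$ and convergence of areas and of the bulk Ricci integrals, and then passes from the primed to the unprimed slice via the jump relations $H_{\Sigma_{\bar t}'}=0$ on $\Sigma_{\bar t}'\setminus\Sigma_{\bar t}$, $H_{\Sigma_{\bar t}'}=H_{\Sigma_{\bar t}}\geq0$ a.e.\ on $\Sigma_{\bar t}'\cap\Sigma_{\bar t}$, which give $\int_{\Sigma_{\bar t}'}H\leq\int_{\Sigma_{\bar t}}H$, together with $|\Sigma_{\bar t}|\leq|\Sigma_{\bar t}'|$, $\Omega_{\bar t}\subset\Omega_{\bar t}'$ and $\overline{Ric}(\partial_r,\partial_r)<0$, hence $\mathcal{W}(\Sigma_{\bar t}')-\xi(|\Sigma_{\bar t}'|)\leq\mathcal{W}(\Sigma_{\bar t})-\xi(|\Sigma_{\bar t}|)$. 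Without this step your monotonicity statement does not reach the initial hypersurface.

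Two smaller inaccuracies, neither fatal but worth fixing: your coarea identity $\int_{\Omega_t\setminus\Omega_{\bar t}}\overline{Ric}(\partial_r,\partial_r)\,dv=\int_{\bar t}^t\int_{\Sigma_s}\frac{\overline{Ric}(\partial_r,\partial_r)}{H}\,d\mu\,ds$ is in general only an inequality "$\leq$", because the set $\{\nabla u=0\}$ (the jump regions) can have positive measure and is missed by the level sets; since $\overline{Ric}(\partial_r,\partial_r)<0$ in Schwarzschild the inequality goes the right way, but you should state it as such. Also, Lemma \ref{Lem-5.1} as applied here is really the weak-mean-curvature version from Huisken--Ilmanen/Wei, so the "approximation of $u$" you mention is not the issue; the issue is the endpoint/jump behavior described above, and the measurability/integrability of $s\mapsto\int_{\Sigma_s}H\,d\mu$, which follows from $H=|\nabla u|\in L^\infty_{\mathrm{loc}}$ and the coarea formula as you indicate.
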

\begin{proof}
    Using Lemma \ref{Lem-5.1} and following the same argument as in \cite[Lemma 4.2]{Wei18}, we see that for a.e. $0<\bar{t}<t$, there holds
    \begin{align}
    \int_{\Sigma_t}{H}\,d\mu_{t}-\int_{\Sigma_{\bar{t}}}{H}\,d\mu_{\bar{t}}&\leq \frac{n-1}{n}\int_{\bar{t}}^t\int_{\Sigma_s}{H}\,d\mu_s\,ds-\int_{\Omega_t\setminus\Omega_{\bar{t}}}{\overline{Ric}(\nu,\nu)}\,dv\notag\\
    &\leq \frac{n-1}{n}\int_{\bar{t}}^t\int_{\Sigma_s}{H}\,d\mu_s\,ds-\int_{\Omega_t\setminus\Omega_{\bar{t}}}{\overline{Ric}(\partial_r,\partial_r)}\,dv,\label{mono-1}
    \end{align} 
where we have used the fact that $\overline{Ric}(\partial_r,\partial_r)\leq \overline{Ric}(\nu,\nu)$ in the second inequality of \eqref{mono-1}.
By the definition \eqref{defn-W} of $\mathcal{W}$, we see that
\begin{equation}\label{mono-3}
\mathcal{W}(\Sigma_t)-\mathcal{W}(\Sigma_{\bar{t}})\leq \frac{n-1}{n}\int_{\bar{t}}^{t}\int_{\Sigma_s}{H}\,d\mu_s\,ds
\end{equation}
holds for a.e. $0<\bar{t}<t$.
Since $\Sigma$ is outward minimizing, then $|\Sigma_t|=e^t|\Sigma|$ for all $t\geq 0$ by property (5) in Theorem \ref{Prop-Weak}, which follows that $\frac{d}{dt}|\Sigma_t|=|\Sigma_t|$. Hence 
\begin{align}
    \xi(\vert\Sigma_t\vert)-\xi(\vert\Sigma_{\bar{t}}\vert)&=\int_{\bar{t}}^t{\frac{d}{ds}\xi(|\Sigma_s|)}\,ds\notag\\
    &=\int_{\bar{t}}^t{\xi'(|\Sigma_s|)|\Sigma_s|}\,ds\notag\\
    &=\frac{n-1}{n}\int_{\bar{t}}^t{\xi(|\Sigma_s|)}\,ds+(n-1)\int_{\bar{t}}^t{\xi_1(|\Sigma_s|)}\,ds,\label{mono-2}
\end{align}
where we have used \eqref{mon-eq9} in the last equality. Combining \eqref{mono-3} with \eqref{mono-2} gives 
\begin{align}
&\left[\mathcal{W}(\Sigma_t)-\xi(\vert\Sigma_t\vert)\right]-\left[ \mathcal{W}(\Sigma_{\bar{t}})-\xi(\vert\Sigma_{\bar{t}}\vert)\right]\notag\\
\leq&\frac{n-1}{n}\int_{\bar{t}}^t{\left[\mathcal{W}(\Sigma_s)-\xi(\vert\Sigma_s\vert)\right]}\,ds+(n-1)\int_{\bar{t}}^t{\left[\int_{\Omega_s}{\frac{\varphi''}{\varphi}}\,ds-\xi_1(|\Sigma_s|)\right]}\,ds\notag\\
\leq&\frac{n-1}{n}\int_{\bar{t}}^t{\left[\mathcal{W}(\Sigma_s)-\xi(\vert\Sigma_s\vert)\right]}\,ds,\quad\mathrm{a.e.} \,\, 0<\bar{t}<t.\label{mono-xi}
\end{align}

To show \eqref{mono-xi} holds for all pairs $0\leq\bar{t}<t$, we use the $C^{1,\beta}$ convergence \eqref{C-1beta-con} and the weak convergence of the mean curvature. For any $\bar{t}\geq 0$, we can find a sequence of time $t_i\searrow \bar{t}$, such that $0\leq\bar{t}<t_i<t$ satisfies \eqref{mono-xi}. By \eqref{C-1beta-con}, we have $\Sigma_{t_i}\to\Sigma_t'$ in $C^{1,\beta}$ away from a singular set $Z$ of Hausdorff dimension at most $n-7$. This implies that
\begin{align}\label{eq-lim-area} \lim_{i\to\infty}\vert\Sigma_{t_i}\vert=\vert\Sigma'_{\bar{t}}\vert,\ \lim_{i\to\infty}\int_{\Omega_{t_i}}\overline{Ric}(\partial_r,\partial_r)\,dv=\int_{\Omega'_{\bar{t}}}\overline{Ric}(\partial_r,\partial_r)\,dv.
\end{align} 
On the other hand, since the weak mean curvature of $\Sigma_{t_i}$ equals to $|\nabla u|$ a.e. and is uniformly bounded for a.e. $x\in\Sigma_{t_i}$. Then it follows from the Riesz Representation theorem (\cite[(1.13)]{HI-2001}) that 
\begin{equation}\label{eq-Riesz}
 \int_{\Sigma_{t_i}}{H_{\Sigma_{t_i}}\nu_{\Sigma_{t_i}}\cdot X}\,d\mu_{t_i}\to \int_{\Sigma_{\bar{t}}'}{H_{\Sigma_{\bar{t}}'}\nu_{\Sigma_{\bar{t}}'}\cdot X}\,d\mu_{\bar{t}}',\quad X\in C^0_c(TM),
\end{equation}
which means 
\begin{equation}\label{eq-H-con1}
   \lim_{i\to\infty}\int_{\Sigma_{t_i}}{H}\,d\mu_{t_i}= \int_{\Sigma_{\bar{t}}'}{H}\,d\mu_{\bar{t}}'.
\end{equation}
Now, combining \eqref{eq-lim-area} and \eqref{eq-H-con1} with \eqref{mono-xi}, we conclude that
\begin{align}\label{eq-lim-xi}
    \left[\mathcal{W}(\Sigma_t)-\xi(\vert\Sigma_t\vert)\right]-\left[ \mathcal{W}(\Sigma'_{\bar{t}})-\xi(\vert\Sigma'_{\bar{t}}\vert)\right]
\leq\frac{n-1}{n}\int_{\bar{t}}^t{\left[\mathcal{W}(\Sigma_s)-\xi(\vert\Sigma_s\vert)\right]}\,ds
\end{align}
holds for all $\bar{t}\geq 0$ and a.e. $t>0$ with $\bar{t}<t$.

Next, by \cite[(1.15)]{HI-2001}, the weak mean curvature of $\Sigma_{\bar{t}}$ and $\Sigma_{\bar{t}}'$ satisfy
\begin{align*}
    H_{\Sigma_{\bar{t}}'}&=0 \quad \,\text{on} \quad \Sigma_{\bar{t}}'\setminus \Sigma_{\bar{t}},\\
    H_{\Sigma_{\bar{t}}'}&=H_{\Sigma_{\bar{t}}}\geq 0 \quad \mathcal{H}^n\text{-a.e.} \quad \text{on} \quad \Sigma_{\bar{t}}'\cap\Sigma_{\bar{t}}.
\end{align*}
Since the weak mean curvature $H$ is nonnegative on $\Sigma_{\bar{t}}$, then we have
\begin{equation}\label{compare-s-s'}
  \int_{\Sigma_{\bar{t}}'}{H}\,d\mu_{\bar{t}}'\leq \int_{\Sigma_{\bar{t}}}{H}\,d\mu_{\bar{t}}.
\end{equation}
Moreover, by the property (1) in Theorem \ref{Prop-Weak} and the fact that $\Omega_{\bar{t}}\subset\Omega_{\bar{t}}'$, $\overline{Ric}(\partial_r,\partial_r)<0$, we deduce that 
\begin{equation}\label{compare-5.2-2}
    |\Sigma_{\bar{t}}|\leq |\Sigma_{\bar{t}}'|,\quad \mathcal{W}(\Sigma_{\bar{t}}')\leq \mathcal{W}(\Sigma_{\bar{t}}).
\end{equation}
Thus by \eqref{eq-lim-xi}-\eqref{compare-5.2-2}, we conclude that \eqref{mono-xi} holds for all $\bar{t}\geq 0$ and a.e. $t>0$ with $\bar{t}<t$. 

Similarly, for any $t>0$ with $0\leq\bar{t}<t$, we can find a sequence of time $t_i\nearrow t$ such that $0\leq\bar{t}<t_i<t$ satisfies \eqref{mono-xi}. In this case, we have $\Sigma_{t_i}\to\Sigma_t$ in $C^{1,\beta}$ away from a singular set $Z$ of Hausdorff dimension at most $n-7$ by \eqref{C-1beta-con}, and also the Riesz Representation theorem gives
\begin{equation}
    \lim_{i\to\infty}\int_{\Sigma_{t_i}}{H}\,d\mu_{t_i}=\int_{\Sigma_{\bar{t}}}{H}\,d\mu_{\bar{t}},
\end{equation}
which yields \eqref{mono-xi} holds for any $t>0$ with $0\leq\bar{t}<t$. This completes the proof of Lemma \ref{Lem-mohh}.
\end{proof}

\begin{prop}\label{prop-g}
    Under the assumptions in Lemma \ref{Lem-mohh}, the quantity $\mathcal{G}(t)$ is nonincreasing for all $t\geq 0$. Moreover, if $\mathcal{G}(t)=\mathcal{G}(\bar{t})$ for some pair $0\leq \bar{t}<t$, then $\Sigma_s$ is a radial coordinate sphere (possibly away from a set of measure zero if $n\geq 7$) for every $s\in(\bar{t},t]$.  
\end{prop}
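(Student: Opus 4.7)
The plan is to recast the integral inequality of Lemma \ref{Lem-mohh} as a Gronwall estimate. Setting $\Psi(s):=\mathcal{W}(\Sigma_s)-\xi(|\Sigma_s|)$, the content of \eqref{Weak-Mono1} is
$$\Psi(t)\leq \Psi(\bar t)+\tfrac{n-1}{n}\int_{\bar t}^{t}\Psi(s)\,ds\qquad (0\leq\bar t<t),$$
and $\Psi$ is locally bounded in $s$ in view of the uniform local $C^{1,\beta}$ bounds for $\Sigma_s$ and the weak convergence properties of the mean curvature collected in \S\ref{sub-2.2}. A standard application of Gronwall's lemma in integral form then yields $\Psi(t)\leq \Psi(\bar t)\,e^{\frac{n-1}{n}(t-\bar t)}$. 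Because $\Sigma$ is outward minimizing, Theorem \ref{Prop-Weak}(5) gives $|\Sigma_s|=e^{s}|\Sigma|$ for all $s\geq 0$, hence $|\Sigma_t|^{-(n-1)/n}=|\Sigma_{\bar t}|^{-(n-1)/n}e^{-\frac{n-1}{n}(t-\bar t)}$. Multiplying the Gronwall bound by this factor the exponential terms cancel and we obtain $\mathcal{G}(t)\leq\mathcal{G}(\bar t)$, which is the monotonicity assertion.

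For the equality case, suppose $\mathcal{G}(t)=\mathcal{G}(\bar t)$ for some $0\leq\bar t<t$. The just-proved monotonicity then forces $\mathcal{G}\equiv\mathcal{G}(\bar t)$ on $[\bar t,t]$, so equality must hold throughout \eqref{Weak-Mono1} and in every intermediate step of its derivation. Unpacking the proof of Lemma \ref{Lem-mohh}, this forces three equalities on $[\bar t,t]$: \emph{(i)} the pointwise comparison $\overline{Ric}(\partial_r,\partial_r)\leq\overline{Ric}(\nu,\nu)$ used at \eqref{mono-1} is saturated, so by \eqref{mon-eq3} together with the fact that $(\varphi')^2-\varphi\varphi''-1<0$ strictly on Schwarzschild we obtain $u=\varphi$ on $\Sigma_s$, i.e.\ $\nu$ is parallel to $\partial_r$; \emph{(ii)} the Cauchy--Schwarz inequality $|A|^2\geq H^2/n$ implicit in the evolution of $\int H\,d\mu$ is saturated $\mathcal{H}^n$-a.e.\ on the regular part of $\Sigma_s$, so $\Sigma_s$ is umbilical there; and \emph{(iii)} the weighted isoperimetric inequality \eqref{isop-2} is saturated for $\Omega_s$, so by Theorem \ref{thm-isop} (applied using that $\varphi''/\varphi$ is strictly decreasing on Schwarzschild) $\Omega_s$ coincides with a radial coordinate ball. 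Either \emph{(i)} or \emph{(iii)} alone identifies $\Sigma_s$ as a radial coordinate sphere, as desired (modulo a set of measure zero in the higher dimensional singular regime).

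The main subtlety will be running the equality-case argument rigorously in the weak setting: one must verify that the pointwise conditions (i)--(iii) persist at \emph{every} $s\in(\bar t,t]$ rather than only at a.e.\ $s$, which requires the $C^{1,\beta}$ convergence \eqref{C-1beta-con} and the Riesz-type convergence of the weighted mean-curvature integrals already exploited in \eqref{eq-H-con1}. Enforcing equality in \eqref{compare-s-s'}--\eqref{compare-5.2-2} at each $s$ simultaneously rules out jump times $\Sigma_s\neq\Sigma_s'$, which is consistent with the preservation of the outward-minimizing property along the weak inverse mean curvature flow.
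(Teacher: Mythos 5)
Your proposal is correct and follows essentially the same route as the paper: Gronwall applied to \eqref{Weak-Mono1} together with $|\Sigma_t|=e^t|\Sigma|$ gives the monotonicity of $\mathcal{G}$, and in the equality case the forced saturation of the weighted isoperimetric inequality \eqref{isop-2} for a.e.\ $s$, combined with the uniqueness statement in Theorem \ref{thm-isop} and the $C^{1,\beta}$ convergence \eqref{C-1beta-con} from Theorem \ref{Prop-Weak}, identifies $\Sigma_s$ as a radial coordinate sphere for every $s\in(\bar t,t]$. The paper relies only on your item \emph{(iii)}; your additional saturations \emph{(i)}--\emph{(ii)} are not needed and would be harder to justify pointwise in the weak setting, but since \emph{(iii)} alone carries the argument this is only a cosmetic difference.
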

\begin{proof}
   By \eqref{Weak-Mono1} and Gronwall's lemma, we have
   \begin{equation}
       \mathcal{W}(\Sigma_t)-\xi(\vert\Sigma_t\vert)\leq \left[\mathcal{W}(\Sigma_{\bar{t}})-\xi(\vert\Sigma_{\bar{t}}\vert)\right]e^{\frac{n-1}{n}(t-\bar{t})}
   \end{equation}
for all $0\leq\bar{t}<t$. Since $|\Sigma_t|=e^t|\Sigma|$,  $\mathcal{G}(t)$ is nonincreasing for all $t$. If $Q(t)=Q(\bar{t})$ for some pair $0<\bar{t}<t$, then from inequality \eqref{mono-xi}, we see that 
\begin{equation*}
\int_{\Omega_s}\frac{\varphi''}{\varphi}\,ds=\xi_1(|\Sigma_s|)
\end{equation*}
holds for a.e. $s\in[\bar{t},t]$, which implies that $\Sigma_s$ is a radial coordinate sphere (possibly away from a set of measure zero if $n\geq 7$) for a.e. $s\in[\bar{t},t]$ by the argument in Theorem \ref{thm-isop} and Remark \ref{rem-reduce}. Due to property (3) in Theorem \ref{Prop-Weak}, $\Sigma_s$ is a radial coordinate sphere (possibly away from a set of measure zero if $n\geq 7$) for every $s\in(\bar{t},t]$.
\end{proof}
\begin{prop}\label{prop-lim}
    We have $\lim_{t\to\infty}{\mathcal{G}(t)}\geq 0$.
\end{prop}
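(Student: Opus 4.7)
The plan is to decompose $\mathcal{G}(t)$ into three pieces and treat each as $t\to\infty$. Write
\begin{equation*}
\mathcal{G}(t)=\frac{1}{|\Sigma_t|^{(n-1)/n}}\int_{\Sigma_t}H\,d\mu+\frac{1}{|\Sigma_t|^{(n-1)/n}}\int_{\Omega_t}\overline{Ric}(\partial_r,\partial_r)\,dv-\frac{\xi(|\Sigma_t|)}{|\Sigma_t|^{(n-1)/n}}.
\end{equation*}
Since $\Sigma$ is outward minimizing, Theorem~\ref{Prop-Weak}(5) gives $|\Sigma_t|=e^t|\Sigma|\to\infty$. I will show the first piece has $\liminf$ at least $n\omega_n^{1/n}$, the second piece tends to zero, and the third piece tends to $n\omega_n^{1/n}$.

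For the $\xi$-term, use the ODE \eqref{mon-eq9}. In Schwarzschild, $\varphi''/\varphi=\tfrac{m(n-1)}{2}\varphi^{-(n+1)}$, so $\xi_1(\omega_n\varphi^n(R))=\omega_n\int_{r_0}^R\varphi''\varphi^{n-1}\,dr'=O(\log\varphi(R))$, i.e.\ $\xi_1(s)=O(\log s)$. A variation-of-parameters computation on the linear ODE $\xi'(x)x=\tfrac{n-1}{n}(\xi(x)+n\xi_1(x))$, normalized by the radial-sphere value $\xi(\omega_n\varphi^n)=n(n-1)\omega_n\int_{r_0}^{\cdot}(\varphi')^2\varphi^{n-2}\,dr'$, yields $\xi(s)/s^{(n-1)/n}\to n\omega_n^{1/n}$ as $s\to\infty$. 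For the Ricci integral, the weighted isoperimetric inequality \eqref{isop-2} gives
\begin{equation*}
\Bigl|\int_{\Omega_t}\overline{Ric}(\partial_r,\partial_r)\,dv\Bigr|=n\int_{\Omega_t}\frac{\varphi''}{\varphi}\,dv\le n\xi_1(|\Sigma_t|)=O(t),
\end{equation*}
which divided by $|\Sigma_t|^{(n-1)/n}=|\Sigma|^{(n-1)/n}e^{(n-1)t/n}$ vanishes in the limit.

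The heart of the proof is
\begin{equation*}
\liminf_{t\to\infty}|\Sigma_t|^{-(n-1)/n}\int_{\Sigma_t}H\,d\mu\ \ge\ n\omega_n^{1/n},
\end{equation*}
which I would handle by the blow-down argument of Wei~\cite{Wei18}, whose setup transfers to our (unweighted) functional. Extend the weak IMCF to the doubled Schwarzschild manifold $(\widehat M,\widehat g)$, which is complete and asymptotically flat. Fix a sequence $t_k\to\infty$, set $\lambda_k=|\Sigma_{t_k}|^{1/n}\to\infty$, and choose base points $p_k\in\Sigma_{t_k}$. In the rescaled manifolds $(\widehat M,\widehat g_k,p_k)$ with $\widehat g_k=\lambda_k^{-2}\widehat g$, the conformal factor $(1+m/(4|x|^{n-1}))^{4/(n-1)}$ tends to $1$, so $(\widehat M,\widehat g_k,p_k)$ converges in pointed $C^\infty_{\mathrm{loc}}$ to flat $\mathbb{R}^{n+1}$; the surfaces $\widetilde\Sigma_k$ have area identically $1$ in $\widehat g_k$. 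Each $\widetilde\Sigma_k$ is a $C^{1,\alpha}$ minimizing hull by Theorem~\ref{Prop-Weak}(1), so Huisken--Ilmanen compactness for minimizing hulls extracts a subsequential $C^{1,\beta}$ limit $\widetilde\Sigma_\infty\subset\mathbb{R}^{n+1}$, again outward minimizing and of unit area. Huisken's Euclidean Minkowski inequality for outward-minimizing hypersurfaces \cite{HI-2001} gives $\int_{\widetilde\Sigma_\infty}H_e\,d\widetilde\mu\ge n\omega_n^{1/n}$, and the Riesz-representation continuity of $\int H\,d\mu$ recorded in \eqref{eq-Riesz} transfers this to $\widetilde\Sigma_k$. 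Unwinding the scaling identity $\int H_{\widehat g_k}\,d\mu_{\widehat g_k}=\lambda_k^{1-n}\int_{\Sigma_{t_k}}H\,d\mu$ recovers the desired lower bound.

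The main obstacle is this blow-down step. Three ingredients must be verified: (i) the rescaled surfaces neither collapse nor escape to infinity (the fixed area together with the minimizing-hull property forbid both); (ii) the limit $\widetilde\Sigma_\infty$ is a genuine outward-minimizing hypersurface in flat space, so that Huisken's Euclidean Minkowski inequality applies; (iii) the weak mean-curvature integral is lower semicontinuous under this convergence. All three are essentially carried out in \cite{HI-2001,Wei18}, and the only novelty here is that we use the bare $\int H\,d\mu$ in place of $\int \varphi'H\,d\mu$, which if anything simplifies matters since no potential function needs to be tracked through the rescaling.
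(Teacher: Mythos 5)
Your decomposition and your handling of the second and third pieces are correct, and they take a mildly different (more computational, Schwarzschild-specific) route than the paper: you compute $\xi_1(s)=O(\log s)$ and $\xi(s)/s^{(n-1)/n}\to n\omega_n^{1/n}$ explicitly, whereas the paper compares $\Sigma_t$ with the radial coordinate sphere flow $\widehat{\Sigma}_t$ of equal area and uses the weighted isoperimetric inequality \eqref{isop-2} to reduce everything to the single inequality $\mathcal{G}(t)\ge \vert\Sigma_t\vert^{-\frac{n-1}{n}}\int_{\Sigma_t}H\,d\mu_t-\vert\widehat{\Sigma}_t\vert^{-\frac{n-1}{n}}\int_{\widehat{\Sigma}_t}H\,d\mu_t$, together with \eqref{eq-Cs}. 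Either reduction is fine; both rest on \eqref{mon-eq9}, \eqref{isop-2} and the structure of the radial spheres.

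The gap is in the blow-down step, which is the heart of the proof. Your claim that ``the fixed area together with the minimizing-hull property forbid collapse and escape'' is not a valid justification: an outward-minimizing boundary of unit area in (asymptotically) flat space can be long and thin (take the boundary of the convex hull of a segment thickened to radius $\epsilon$, which is convex, hence outward minimizing, with area $\sim\epsilon^{n-1}L$), so its diameter is not controlled by its area. Consequently, in a pointed limit based at $p_k$ part of the surface --- and with it part of the area and of $\int H\,d\mu$ --- may escape, the limit $\widetilde{\Sigma}_\infty$ may have area strictly less than $1$, and Huisken's Euclidean inequality then only gives $\int_{\widetilde{\Sigma}_\infty}H_e\ge n\omega_n^{1/n}\vert\widetilde{\Sigma}_\infty\vert^{\frac{n-1}{n}}$, which is weaker than the bound $n\omega_n^{1/n}$ you need. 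What actually pins down the rescaled surfaces is not their fixed area but the enclosure structure of the weak flow: $u$ is proper, the $\Omega_t$ are minimizing hulls exhausting $M$ and enclosing coordinate balls of radius comparable to $r(t)$, and a minimizing hull enclosing a large ball with comparable area must be close to the corresponding sphere; this is precisely the content of the Blowdown Lemma \cite[Lemma 7.1]{HI-2001}. The paper invokes it directly: $\Sigma_t^{1/r(t)}\to S(1)$ in $C^{1,\alpha}$, which together with the gradient estimate $H=|\nabla u|\le C/|x|$ from \cite[(7.2)]{HI-2001} and the weak (Riesz) convergence of the mean curvature yields the exact limit $n\omega_n^{1/n}$ for $\vert\Sigma_t\vert^{-\frac{n-1}{n}}\int_{\Sigma_t}H\,d\mu_t$, with no need for lower semicontinuity, no Minkowski inequality for a merely $C^{1,\alpha}$ limit, and no extra dimensional restriction when $n\ge 7$. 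If you replace your compactness sketch by a direct appeal to that lemma, as in \cite{Wei18}, your argument closes.
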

\begin{proof}
    Let $\widehat{\Sigma}$ be the radial coordinate sphere in the Schwarzschild manifold $(M^{n+1},\bar{g})$ which satisfies $|\widehat{\Sigma}|=|\Sigma|$, and $\widehat{\Omega}$ be the domain enclosed by $\widehat{\Sigma}$ and $\partial M$. Let $\widehat{\Omega}_t$ be the classical solutions to the IMCF, then we have
    \begin{equation*} |\widehat{\Sigma}_t|=e^t|\widehat{\Sigma}|=e^t|\Sigma|=|\Sigma_t|,
    \end{equation*}
    where the first equality follows from the evolution equation \eqref{eveq-area} of the area element along the smooth IMCF and the last equality follows from property (5) in Theorem \ref{Prop-Weak}. Then the same calculation as in \eqref{eq-Wxigeq} gives
    \begin{equation}\label{eq-5.4-1}
        \mathcal{W}(\Sigma_t)-\xi(\vert\Sigma_t\vert)\geq \int_{\Sigma_t}{H}\,d\mu_t-\int_{\widehat{\Sigma}_t}{H}\,d\mu_t.
    \end{equation}
    Using the model \eqref{con-model} of the Schwarzschild manifold and we define the blow down object by 
    \begin{equation*}
        \Sigma_t^{\lambda}:=\lambda\Sigma_t=\{\lambda x:x\in\Sigma_t\},\quad g^{\lambda}(x):=\lambda^2g(x/{\lambda}),
    \end{equation*}
    where $\lambda>0$ is a fixed number. Define $r(t)$ by $|\Sigma_t|=\omega_n r(t)^n$, then $|\Sigma_t^{1/{r(t)}}|_{g^{1/{r(t)}}}=\omega_n$ and \cite[Lemma 7.1]{HI-2001} implies that 
    \begin{equation}\label{eq-5.4-2}
        \Sigma_t^{1/{r(t)}}\to S(1)\quad \text{in}\,\,\, C^{1,\alpha}\,\,\, \text{as}\,\,\,t\to\infty.
    \end{equation}
    According to \cite[Equation (7.2)]{HI-2001}, there exists positive constants $C$ and $R_0$ depending only on $n$, such that 
    \begin{equation*}
        |\nabla u(x)|\leq \frac{C}{|x|},\quad \text{for all}\,\, |x|\geq R_0.
    \end{equation*}
Then by property (4) in Theorem \ref{Prop-Weak}, we have
\begin{equation}\label{eq-5.4-3}
    H=|\nabla u|\leq\frac{C}{|x|}\leq \frac{C'}{r(t)},\quad \text{a.e. on} \,\,\Sigma_t
\end{equation}
for a.e. sufficiently large $t$ and $C'=C+1$, where we have used the convergence result \eqref{eq-5.4-2}. Then the mean curvature of $\Sigma_t^{1/{r(t)}}$ with respect to the metric $g^{1/{r(t)}}$ satisfies
\begin{equation}\label{eq-5.4-5}
    H^{1/{r(t)}}=r(t)H(r(t)x)\leq C',\quad \text{a.e. on} \,\,\Sigma_t^{1/{r(t)}}
\end{equation}
for a.e. sufficiently large $t$. Then for any sequence $t_i\to\infty$ such that \eqref{eq-5.4-5} holds, we write $\Sigma_t^{1/{r(t_i)}}$ as graphs of $C^{1,\alpha}$ functions over $S(1)$ and have the weak convergence of the mean curvature 
\begin{equation}\label{eq-5.4-6}
    \int_{\Sigma_{t_i}^{1/{r(t_i)}}}{H_{\Sigma_{t_i}^{1/{r(t_i)}}}\nu_{\Sigma_{t_i}^{1/{r(t_i)}}}\cdot X}\to\int_{S(1)}{H_{S(1)}\nu_{S(1)}\cdot X},\quad X\in C_c^0(TM).
\end{equation}
Then by \eqref{eq-5.4-2}, \eqref{eq-5.4-5} and \eqref{eq-5.4-6}, we have
\begin{align}
    \lim_{t_i\to\infty}|\Sigma_{t_i}|^{-\frac{n-1}{n}}\int_{\Sigma_{t_i}}{H}\,d\mu_{t_i}&=\omega_n^{-\frac{n-1}{n}}\lim_{t_i\to\infty}{r(t_i)^{-(n-1)}}\int_{\Sigma_{t_i}}{H}\,d\mu_{t_i}\notag\\
    &=\omega_n^{-\frac{n-1}{n}}\lim_{t_i\to\infty}\int_{\Sigma_{t_i}^{1/{r(t_i)}}}{H^{1/{r(t_i)}}}(x)\,d\mu_{{\Sigma_{t_i}^{1/{r(t_i)}}}}\notag\\
    &=n\omega_n^{\frac{1}{n}}.\label{eq-5.4-7}
\end{align}
Combining \eqref{eq-5.4-7} with \eqref{eq-Cs} and \eqref{eq-5.4-1} completes the proof of Proposition \ref{prop-lim}, since we have $\lim_{r\to\infty}\varphi'=1$ in the Schwarzschild case.
\end{proof}
Up to here, we have shown that inequality \eqref{Mintype2} holds if $\Sigma$ is homologous to the horizon. We now check the equality case. If the equality holds in \eqref{Mintype2}, then $\Sigma_t$ is a radial coordinate sphere (possibly away from a set of measure zero if $n\geq 7$) for every $t>0$ by Proposition \ref{prop-g}. Property (3) in Theorem \ref{Prop-Weak} says that $\Sigma_t\to\Sigma'$ locally in $C^{1,\beta}$ as $t\to 0+$ (possibly away from a set of measure zero if $n\geq 7$) and hence $\Sigma'$ is also a radial coordinate sphere (possibly away from a set of measure zero if $n\geq 7$). Then $\Sigma'$ has positive constant mean curvature (a.e. if $n\geq 7$). Applying \cite[(1.15)]{HI-2001} again, we have
\begin{align*}
    H_{\Sigma'}&=0 \quad \,\text{on} \quad \Sigma'\setminus \Sigma,\\
    H_{\Sigma'}&=H_{\Sigma}\geq 0 \quad \mathcal{H}^n\text{-a.e.} \quad \text{on} \quad \Sigma'\cap\Sigma,
\end{align*}
which yields $\Sigma=\Sigma'$ (a.e. if $n\geq 7$), and hence $\Sigma$ is a radial coordinate sphere (possibly away from a set of measure zero if $n\geq 7$). Since $\Sigma$ is smooth, we conclude that $\Sigma$ is precisely a radial coordinate sphere.

\subsection{Schwarzschild case II: $\Sigma$ is null homologous}\label{Subsec-5.2}
In this subsection, we assume that $\Sigma$ is null homologous. We fill-in the region $W$ bounded by the horizon $\partial M$ as in \cite[\S 6]{HI-2001} to get a new space $\tilde{M}$, and run the weak IMCF in $\tilde{M}$ until it nearly touches $\partial M$, then we jump to the strictly minimizing hull $F$ of $\Omega_t\cup W$ and restart the weak IMCF from $F$.

Assume that $t_1$ is the jump time, we firstly deal with the time interval $t\in[0,t_1]$, where $\Omega_t$ is always null homologous. However, the argument as in \S\ref{sub-5.1} cannot directly apply, because the last equality in \eqref{mono-xi} is not valid in general since $\Omega_t$ is null homologous. We firstly recall a result proved by Brendle and Eichmair \cite{BE13}:
\begin{thmE}[\cite{BE13}]
Let $\Omega$ be an isoperimetric region in the double Schwarzschild manifold. If the volume of $\Omega$ is sufficiently large, then $\Omega$ contains the horizon $\partial M$ and is bounded by two radial coordinate spheres.
\end{thmE}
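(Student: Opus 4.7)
The plan is to proceed in three stages: existence of large isoperimetric regions, identification of the boundary as a constant mean curvature (CMC) hypersurface, and rigidity via an Alexandrov-type uniqueness theorem. For existence, I would fix a large volume $V$ and minimize the perimeter among Borel sets of volume $V$ in the double Schwarzschild manifold $\widehat M$. Since $\widehat M$ has two asymptotically flat ends related by the reflection isometry $\mathcal{J}$, the concentration-compactness principle of Lions (adapted as in the work of Eichmair--Metzger) applies: a minimizing sequence either converges (modulo the isometry group) or splits into pieces, one of which escapes to an asymptotically flat end. By the sharp Euclidean isoperimetric inequality and a careful asymptotic expansion of perimeter in asymptotically Schwarzschild ends, any mass lost at infinity would contribute more perimeter than a coordinate sphere of the same volume centered near the horizon. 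This rules out loss at infinity for $V$ sufficiently large and produces a minimizer $\Omega$.

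Next, I would extract the geometric structure of $\partial \Omega$. Standard regularity theory for perimeter minimizers in ambient dimension $n+1 < 8$ (and up to a singular set of codimension $\geq 7$ in general) shows $\partial \Omega$ is smooth, and the first variation formula forces it to be CMC. The second variation gives stability: for any function $f$ on $\partial \Omega$ with $\int_{\partial\Omega} f = 0$,
\begin{equation*}
\int_{\partial\Omega}\bigl(|\nabla f|^2 - (|A|^2 + \overline{\mathrm{Ric}}(\nu,\nu))f^2\bigr)\,d\mu \geq 0.
\end{equation*}
A quantitative form of stability, combined with the asymptotic behavior of $\overline{\mathrm{Ric}}$ in Schwarzschild (which decays like $|x|^{-(n+1)}$), would be used to show that $\partial \Omega$ is connected for large $V$: a disconnected minimizer would have a component which is a nearly Euclidean sphere far from the horizon, and a Hawking-mass type monotonicity along the weak IMCF would contradict minimality.

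The main step, and the hardest one, is to conclude that the connected CMC boundary is a \emph{radial coordinate sphere} (or pair thereof in the doubled picture). For this I would invoke Brendle's Alexandrov-type theorem for the Schwarzschild manifold: the key ingredient is the existence of the conformal Killing potential $\varphi'$, which satisfies $\overline{\nabla}^2 \varphi' = \frac{\varphi''}{\varphi'}(\bar g - d\varphi'\otimes d\varphi'/|\nabla \varphi'|^2)$ on regions where $\varphi' \neq 0$. Using this potential in a Heintze--Karcher style identity and integrating by parts over the domain enclosed together with the reflection copy, one obtains a pointwise umbilicity conclusion for any embedded CMC hypersurface not meeting the horizon tangentially. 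Because $\Omega$ has large volume, its boundary cannot lie in a single copy of Schwarzschild near the horizon (that would violate the isoperimetric comparison with a pair of symmetric coordinate spheres enclosing the horizon); hence $\partial \Omega$ is reflection-symmetric across $\partial M$ and consists of two radial coordinate spheres. The anticipated obstacle is the careful implementation of Brendle's Heintze--Karcher argument in the doubled setting, together with the a priori estimate that $\partial \Omega$ is star-shaped, which requires combining the stability inequality with the asymptotic rigidity coming from the comparison to Euclidean minimizers.
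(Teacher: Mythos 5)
A first remark: the paper does not reprove Theorem E --- it is quoted from Brendle--Eichmair \cite{BE13} --- but it does carry out the analogous argument in detail for the one-ended Schwarzschild manifold (Theorem \ref{Thm-isolarge}), following the same strategy as \cite{BE13}: argue by contradiction with a sequence of isoperimetric regions $\Omega_k$ of volume $\mathrm{Vol}(B(r_k))\to\infty$, and split into a ``centered'' case and an ``off-center'' case. In the centered case the rescaled regions $r_k^{-1}\Omega_k$ converge to the unit ball, regularity theory makes the outer boundary a closed embedded CMC hypersurface far from the horizon, Brendle's Alexandrov-type theorem \cite{Br13} identifies it as a radial coordinate sphere, and a maximum-principle/half-space argument pins down the inner boundary. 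In the off-center case, the quantitative Eichmair--Metzger estimate (Proposition A, inequality \eqref{In-offcenter}) gives a perimeter excess of order $c\eta m\bigl(1-\tfrac1\tau\bigr)^2 r_k$ over the centered sphere, contradicting minimality. Your proposal shares the final ingredient (Brendle's CMC rigidity) but is missing the backbone of this argument.

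Concretely, there are two gaps. First, you never rule out the off-center configurations: a large isoperimetric region could a priori be a nearly round ball sitting far from the horizon in one end, and for fixed volume its perimeter deficit relative to the centered coordinate sphere is only of order $m\,r$, so nothing soft will exclude it. The device you propose instead --- stability plus ``a Hawking-mass type monotonicity along the weak IMCF would contradict minimality'' --- is not an argument: weak IMCF monotonicity compares $\Sigma$ to spheres at infinity and gives no lower bound on the perimeter of an off-center competitor of prescribed volume; the whole point of \cite{EM13} (Proposition A here) is to supply exactly that quantitative excess, and without it (or a substitute) the proof does not close. Second, your intermediate claim that $\partial\Omega$ is connected for large $V$ is actually contrary to the conclusion of Theorem E: in the doubled manifold the minimizer contains the horizon and its boundary has two components, one facing each end; moreover, before invoking Brendle's theorem you must know that each boundary component is a closed embedded CMC hypersurface lying entirely in the exterior region away from the horizon, which in the paper comes from the scaled-convergence/localization step ($\Omega_k\subset B(2r_k)$, convergence of $r_k^{-1}\Omega_k$ to the punctured ball, then \cite{Manuel-04}); your sketch supplies neither this localization nor the claimed reflection symmetry. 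The existence stage via concentration-compactness is harmless but superfluous, since Theorem E assumes $\Omega$ is already isoperimetric.
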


Since the double Schwarzschild manifold can be obtained by combining the Schwarzschild manifold with its image under the reflection map $\mathcal{J}$ across the horizon $S(r_0)$, their proof of Theorem E also suits the case of Schwarzschild manifold by a slightly modification. We need the following useful proposition.

\begin{thmF}[\cite{EM13}]\label{Prop-off}
    Given $(\tau,\eta)\in(1,\infty)\times(0,1)$, there exists $V_0>0$ so that the following holds: Let $\Omega$ be a bounded set with finite perimeter in the Schwarzschild manifold $({M}^{n+1},\bar{g})$ containing the horizon $\partial M$, and let $r\geq r_0$ be such that 
    \begin{equation*}
        \mathrm{Vol}(\Omega)=\mathrm{Vol}(B(r))\geq V_0.
    \end{equation*}
    If $\Omega$ is $(\tau,\eta)$-off-center, i.e., if $|\partial^*\Omega\setminus {B}(\tau r)|\geq \eta|S(r)|$, then
    \begin{equation}\label{In-offcenter}
        |\partial^*\Omega|\geq |S(r)|+\frac{c\eta m}{2}\left(1-\frac{1}{\tau}\right)^2 r.
    \end{equation}
    Here, $c>0$ is a constant that only depends on $n$.
\end{thmF}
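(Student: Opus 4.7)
The plan is to follow the strategy of Eichmair and Metzger in the conformally flat model $\bar g = u^{4/(n-1)}\delta$ with $u(x) = 1 + \tfrac{m}{4|x|^{n-1}}$, under which
\begin{equation*}
|\partial^*\Omega|_{\bar g} = \int_{\partial^*\Omega} u^{\frac{2n}{n-1}}\,dA_\delta, \qquad \mathrm{Vol}_{\bar g}(\Omega) = \int_\Omega u^{\frac{2(n+1)}{n-1}}\,dV_\delta,
\end{equation*}
and the binomial expansion $u^{2n/(n-1)} = 1 + \tfrac{nm}{2|x|^{n-1}} + O(|x|^{-2(n-1)})$ is uniform for $|x|$ large. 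The strategy is to reduce the Schwarzschild statement to a quantitative Euclidean isoperimetric comparison where the mass term produces the linear-in-$r$ gain.

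First, decompose $\partial^*\Omega = \Gamma_{\text{in}}\sqcup\Gamma_{\text{out}}$ according to whether points lie inside or outside $B(\tau r)$. The hypothesis supplies $|\Gamma_{\text{out}}|_{\bar g}\geq \eta|S(r)|_{\bar g}$, while on $\Gamma_{\text{out}}$ one has $u^{2n/(n-1)}\leq 1 + \tfrac{nm}{2(\tau r)^{n-1}} + O(r^{-2(n-1)})$ and the matching expansion at $|x|=r$ controls the weight along $S(r)$. The Euclidean isoperimetric inequality in $\mathbb{R}^{n+1}\setminus D(\tau r)$ bounds the Euclidean volume carried by $\Omega\setminus B(\tau r)$ in terms of $|\Gamma_{\text{out}}|_\delta$; together with the fixed total $\bar g$-volume, this forces the bulk of $\Omega$ to lie inside $B(\tau r)$ with controlled Euclidean volume, and in particular the Euclidean symmetric difference $\mathrm{Vol}_\delta(\Omega\triangle B(r))$ is pinned down by $\eta$ and $\tau$.

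Next, perform an explicit radial rearrangement: replace the off-center portion by an equal-$\bar g$-volume cap abutting $S(r)$ and estimate the resulting perimeter change. Transferring a boundary element of $\bar g$-area $\delta A$ from $|x|=\tau r$ to $|x|\approx r$ alters the Schwarzschild weight by $\tfrac{nm}{2}\bigl(r^{-(n-1)}-(\tau r)^{-(n-1)}\bigr)\delta A$, while $\bar g$-volume preservation imposes a compensating deformation on the inner part whose perimeter cost is governed by the quantitative Euclidean isoperimetric inequality. Careful accounting of these contributions, together with the expansion $|S(r)|_{\bar g} = \omega_n r^n + \tfrac{n\omega_n m}{2}r + o(r)$, produces $|\partial^*\Omega|_{\bar g} - |S(r)|_{\bar g} \geq \tfrac{c\eta m}{2}(1-\tau^{-1})^2 r$.

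The main obstacle is the interplay between the linear-in-$(1-\tau^{-1})$ gain from radial transport against the correction forced by the $\bar g$-volume constraint: the upgrade from $(1-\tau^{-1})$ to $(1-\tau^{-1})^2$ relies on a sharp stability form of the Euclidean rearrangement, where the first-order gain at the scale of the conformal factor is partially cancelled by the first-order deformation cost, leaving a quadratic residue. The threshold $V_0$ is then chosen so that $r$ is large enough for the $O(|x|^{-2(n-1)})$ remainders in both conformal weights, together with the Euclidean quantitative isoperimetric deficit, to be absorbed into the mass term; uniformity of the constant $c$ in $n$ alone requires a uniform stability bound over the class of admissible competitors $\Omega$.
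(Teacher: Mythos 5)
The first thing to note is that the paper does not prove this statement at all: Proposition A is quoted directly from Eichmair--Metzger \cite{EM13} (where it is established for a whole class of asymptotically flat manifolds, not just exact Schwarzschild), so the relevant benchmark is their argument. Your outline is in that general spirit (conformal model, expansion of the weights $u^{2n/(n-1)}$ and $u^{2(n+1)/(n-1)}$), but as written it is a plan rather than a proof, and its central step does not hold up.

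Concretely: (i) the claim that the off-center hypothesis plus the Euclidean isoperimetric inequality ``pins down'' $\mathrm{Vol}_\delta(\Omega\triangle B(r))$ is not valid --- $|\partial^*\Omega\setminus B(\tau r)|\geq\eta|S(r)|$ is a perimeter condition and is perfectly compatible with the far portion enclosing negligible volume (long thin fingers or nearly collapsed sheets), so no bound on the symmetric difference follows; in that regime the Euclidean isoperimetric deficit of $\Omega$ is enormous and a different, easier argument applies, which means the proof must branch into cases that your plan never separates. (ii) The sign in your transfer step is backwards: moving boundary area from $|x|=\tau r$ to $|x|\approx r$ \emph{increases} the area weight $1+\tfrac{nm}{2}|x|^{1-n}$, so by itself it makes the centered competitor larger, not smaller. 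The actual source of the gain is the mismatch between the volume weight $1+\tfrac{(n+1)m}{2}|x|^{1-n}$ and the area weight: enclosing a prescribed $\bar g$-volume far from the center requires more Euclidean volume, hence more Euclidean area, than the reduced area weight out there saves; your ``first-order cancellation leaving a quadratic residue'' gestures at this but attributes the first-order gain to the wrong term and supplies no computation, and this is precisely where the entire content of the proposition lies. (iii) Routing the argument through quantitative isoperimetric stability is at best unsubstantiated: the target gain $\tfrac{c\eta m}{2}(1-\tau^{-1})^2 r$ is of the same order as the mass corrections themselves, while stability bounds lose a square root of the (already small, scale-invariant) deficit, so it is not clear the bookkeeping closes; Eichmair--Metzger instead work with the sharp Euclidean isoperimetric inequality, superadditivity of $V\mapsto V^{n/(n+1)}$, and a direct slicing/comparison argument in the annulus between $B(r)$ and $B(\tau r)$, which is also where the $(1-\tau^{-1})$ factors naturally arise. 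Finally, your sketch never uses that $\Omega$ contains the horizon, i.e.\ never accounts for the inner boundary component in the area comparisons, which does enter the statement and its use in Theorem 5.6.
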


\begin{thm}\label{Thm-isolarge}
    There exists $V_1>0$ with the following property: If $\Omega$ is an isoperimetric region in the Schwarzschild manifold with $\mathrm{Vol}(\Omega)\geq V_1$, then $\partial^{*}\Omega$ is the union of the horizon and a radial coordinate sphere.
\end{thm}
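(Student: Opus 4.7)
The plan is to adapt the Brendle--Eichmair strategy~\cite{BE13} for the double Schwarzschild manifold to the single Schwarzschild case, using Theorem D (isoperimetric inequality for horizon-containing sets) and Proposition A (the off-center inequality) as the main inputs. I would argue by contradiction: assume there is a sequence of isoperimetric regions $\Omega_k$ with $V_k := \mathrm{Vol}(\Omega_k) \to \infty$ such that $\partial^* \Omega_k$ is not the disjoint union of $\partial M$ and a radial coordinate sphere. Define $r_k \geq r_0$ by $\mathrm{Vol}(B(r_k)) = V_k$; then $r_k \to \infty$, and since $B(r_k)$ is an admissible competitor of the same volume, the isoperimetric property of $\Omega_k$ yields $|\partial^* \Omega_k| \leq |S(r_k)|$ for every $k$.

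The first key step is to show that $\Omega_k$ must contain the horizon once $k$ is large, in order to make Proposition A applicable. If $\Omega_k$ does not contain $\partial M$, I would construct a horizon-containing competitor with the same volume but strictly smaller perimeter, exploiting the fact that $\partial M$ is a strictly outward minimizing minimal surface in Schwarzschild (reflecting the positivity of $m$): one fills in a thin tubular neighborhood of the horizon and trims an equal volume from the outer bulk of $\Omega_k$, and the asymptotic flatness allows the trimmed perimeter to be made arbitrarily small compared to the perimeter gain of adding the horizon neighborhood. This is the one-sided analogue of the $\mathbb{Z}/2$-symmetrization argument used by Brendle--Eichmair in the double Schwarzschild setting.

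With $\Omega_k$ containing the horizon, I would apply Proposition A to forbid $\Omega_k$ from being $(\tau, \eta)$-off-center for any preassigned $\tau > 1$ and $\eta \in (0,1)$: such off-centering would force
\begin{equation*}
|\partial^* \Omega_k| \;\geq\; |S(r_k)| + \frac{c\eta m}{2}\Bigl(1 - \tfrac{1}{\tau}\Bigr)^{2} r_k,
\end{equation*}
contradicting $|\partial^* \Omega_k| \leq |S(r_k)|$ once $r_k$ is sufficiently large. Consequently $\partial^* \Omega_k$ concentrates measure-theoretically around the coordinate sphere $S(r_k)$, and after rescaling by $r_k^{-1}$ the metric $g^{1/r_k}$ converges to the Euclidean metric on $\mathbb{R}^{n+1}$ while $r_k^{-1} \Omega_k$ converges in $L^1$, with convergent perimeters, to the unit Euclidean ball.

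Finally, the free boundary $\Sigma_k := \partial^* \Omega_k \setminus \partial M$ is a stable CMC hypersurface (being the boundary of an isoperimetric region) and is smooth away from a closed singular set of Hausdorff codimension at least $7$. Standard elliptic regularity for stable CMC hypersurfaces promotes the $L^1$ convergence to $C^{2,\alpha}$ convergence on the regular part, so for $k$ large $\Sigma_k$ is a smooth, strictly star-shaped closed CMC hypersurface lying $C^{2,\alpha}$-close to $S(r_k)$. Applying Brendle's Alexandrov-type uniqueness theorem for closed embedded CMC hypersurfaces in the Schwarzschild manifold (or, equivalently in this perturbative regime, an implicit function theorem argument based on the nondegeneracy of $S(r_k)$ as a solution of the CMC equation), one concludes $\Sigma_k = S(r_k)$, contradicting our choice of $\Omega_k$. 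The main obstacle is the horizon-containment step of the second paragraph: unlike in the double Schwarzschild setting where it is automatic from the reflection symmetry, here it requires a quantitative perimeter comparison that uses the positivity of the mass in an essential way.
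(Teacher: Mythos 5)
Your overall strategy (contradiction, the off-center estimate of Proposition A, blow-down convergence of the rescaled regions, regularity, and Brendle's CMC uniqueness theorem) is the same as the paper's, but your first key step does not work as described, and this creates a real gap downstream. You propose to prove horizon containment by adding a thin collar around $\partial M$ and trimming equal volume from the outer bulk, claiming the competitor has strictly smaller perimeter. The comparison goes the wrong way: attaching a collar of thickness $\varepsilon$ around the horizon adds boundary area roughly $|S(r_0+\varepsilon)|\approx|\partial M|>0$ (and the outward-minimizing property of the horizon only says this new outer leaf has area at least $|\partial M|$, which hurts rather than helps), while trimming the volume $\approx\varepsilon|\partial M|$ from a far-out boundary of mean curvature $H\approx n/r_k$ saves only $O(H\,\varepsilon|\partial M|)\to 0$ in perimeter. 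So this surgery increases the perimeter, and horizon containment cannot be obtained this way; it is in fact part of what the theorem asserts. Note also that with the convention under which the conclusion ``$\partial^{*}\Omega$ is the union of the horizon and a coordinate sphere'' makes sense, the correct competitor bound is $|\partial^{*}\Omega_k|\leq|S(r_k)|+|\partial M|$, not $|S(r_k)|$; the paper's off-center contradiction works because the deficit in \eqref{In-offcenter} grows linearly in $r_k$ and swallows the bounded $|\partial M|$ terms, so it applies Proposition A to $\Omega_k\cup\partial M$ without any prior containment statement.

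Because your Step 1 fails, your final step is incomplete: promoting the blow-down convergence to $C^{2,\alpha}$ closeness and invoking Brendle's theorem only controls the \emph{outer} boundary component near $S(r_k)$, and does not exclude an isoperimetric region of annular type, bounded by an outer coordinate sphere and an inner boundary component near (but not equal to) the horizon — such a configuration is invisible at the blow-down scale, where the limit is only $D(1)\setminus\{0\}$. The paper closes exactly this loophole in its ``centered'' case: after identifying $\partial^{*}\Omega_k\setminus B(r_k/2)$ as a coordinate sphere $S(\bar r_k)$ via regularity and Brendle's theorem, it uses a half-space theorem and the maximum principle to show the inner boundary is also a coordinate sphere $S(\hat r_k)$, and then a direct isoperimetric comparison shows an annulus with $\hat r_k>r_0$ cannot be isoperimetric, forcing $\hat r_k=r_0$. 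You would need to supply an argument of this kind (or a correct proof of horizon containment, which in the doubled manifold comes from reflection symmetry but here requires the quantitative mass term, not a local collar surgery) for your proof to be complete.
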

\begin{proof}
    Suppose that the conclusion is false. Then there exists a sequence of isoperimetric regions $\Omega_k$ with $\mathrm{Vol}(\Omega_k)\to\infty$ such that $\partial^{*}\Omega_k$ is not the union of the horizon and a radial coordinate sphere. Let $r_k\geq r_0$ be such that $\mathrm{Vol}(\Omega_k)=\mathrm{Vol}(B({r_k}))$. Since $\Omega_k$ is an isoperimetric region, it follows that 
    \begin{equation}\label{In-isoo}
        |\partial^*{\Omega_k}|\leq|S({r_k})|+|\partial M|.
    \end{equation}
    Then we consider two cases.

Case 1: Suppose that for every $\tau>1$, we have
\begin{equation*}
    \liminf_{k\to\infty}{r_k^{-n-1}\text{Vol}(\Omega_k\setminus B(\tau r_k))}=0.
\end{equation*}
As in the proof of Theorem 5.1 in \cite{EM13-2}, we see that after passing to a subsequence if necessary, $\Omega_k\subset B(2r_k)$ and the rescaled regions $r_k^{-1}\Omega_k$ converge to the  $D(1)\setminus{\{0\}}$ in $\mathbb{R}^{n+1}$. Then we have
\begin{equation}
    B(r_k/2)\setminus B(r_k/4)\subset \Omega_k\subset B(2r_k)
\end{equation}
for some large integer $k$. By the regularity theorem (cf. \cite[Proposition 2.4]{Manuel-04}), $\partial^{*}\Omega_k\setminus B(r_k/2)$ is a smooth, embedded hypersurface with constant mean curvature. Brendle proved that any closed, embedded hypersurface with constant mean curvature in the deSitter-Schwarzschild manifold is a radial coordinate sphere (see \cite[Corollary 1.2]{Br13}),
so we denote $\partial^{*}\Omega_k\setminus B(r_k/2)$ as $S(\bar{r}_k)$. Set $\hat{r}_k=\inf\{r\in(0,r_k/2):B(r_k/2)\setminus B(r)\subset \Omega_k\}$. Then by the half space theorem, the radial coordinate sphere $S(\hat{r}_k)$ intersects $\overline{\partial^{*}\Omega_k}$ in the regular set of $\partial^{*}\Omega_k$, and then the maximum principle implies that $S(\hat{r}_k)\subset\partial^{*}\Omega_k$, hence $\Omega_k$ has a connected component which is enclosed by $S(\bar{r}_k)$ and $S(\hat{r}_k)$. Note that if $\hat{r}_k>r_0$, then the region enclosed by $S(\bar{r}_k)$ and $S(\hat{r}_k)$ cannot be isoperimetric, so does $\Omega_k$. Then $\hat{r}_k=r_0$, this means $\Omega_k$ is the domain enclosed by $\partial M$ and $S(\bar{r}_k)$,
which contradicts the choice of $\Omega_k$.  

Case 2: Assume that there exists a real number $\tau>1$ such that 
\begin{equation*}
    \liminf_{k\to\infty}{r_k^{-n-1}\mathrm{Vol}(\Omega_k\setminus B(\tau r_k))}>0.
\end{equation*}
By \cite[Lemma 2.4]{EM13-2}, we have
\begin{equation*}
\liminf_{k\to\infty} r_k^{-n}|\partial^{*}\Omega_k\setminus B(\tau r_k)|>0
\end{equation*}
for some $\tau>1$. Hence we can find a real number $\eta\in(0,1)$, such that the sets $\Omega_k\cup\partial M$ are $(\tau,\eta)$-off-center as in Proposition A when $k$ is sufficiently large, then we have
\begin{equation*}
    |\partial^*\Omega_k|\geq |S(r_k)|+\frac{c\eta m}{2}\left(1-\frac{1}{\tau}\right)^2 r_k-|\partial M|,
\end{equation*}
which contradicts with \eqref{In-isoo} if $k$ is sufficiently large. 
\end{proof}

We now turn back to our proof. According to Theorem \ref{Thm-isolarge}, if $\mathrm{Vol}(\Omega)\geq V_1$ and the domain $B(\tilde{r})$ has the same volume of $\Omega$, then we have
\begin{equation*}
  |\Sigma|\geq |\partial B(\tilde{r})|=|S(\tilde{r})|+|\partial M|=\xi_0^{-1}(\mathrm{Vol}(B(\tilde{r})))+|\partial M|=\xi_0^{-1}(\mathrm{Vol}(\Omega))+|\partial M|,
\end{equation*}
where $\xi_0$ is defined in the isoperimetric inequality \eqref{isop2-eq1}.
This means
\begin{equation}\label{ineq-iso2}
    \mathrm{Vol}(\Omega)\leq\xi_0(|\Sigma|-|\partial M|).
\end{equation}
Combining this with \eqref{isop2-eq2} gives
\begin{align*}
   \int_{\Omega}{\frac{\varphi''}{\varphi}}\,ds \leq \xi_{1}(|\Sigma|-|\partial M|).
\end{align*}
Then the same argument as in the proof of Lemma \ref{Lem-mohh} goes through except that \eqref{mono-xi} now becomes
 \begin{align}
&\left[\mathcal{W}(\Sigma_t)-\xi(\vert\Sigma_t\vert)\right]-\left[ \mathcal{W}(\Sigma_{\bar{t}})-\xi(\vert\Sigma_{\bar{t}}\vert)\right]\notag\\
\leq&\frac{n-1}{n}\int_{\bar{t}}^t{\left[\mathcal{W}(\Sigma_t)-\xi(\vert\Sigma_t\vert)\right]}\,ds+(n-1)\int_{\bar{t}}^t{\left[\int_{\Omega_s}{\frac{\varphi''}{\varphi}}\,ds-\xi_1(|\Sigma_s|)\right]}\,ds\notag\\
\leq&\frac{n-1}{n}\int_{\bar{t}}^t{\left[\mathcal{W}(\Sigma_t)-\xi(\vert\Sigma_t\vert)\right]}\,ds+(n-1)\int_{\bar{t}}^t\left[\xi_1(|\Sigma_s|-|\partial M|)-\xi_1(|\Sigma_s|)\right]ds\\
<&\frac{n-1}{n}\int_{\bar{t}}^t{\left[\mathcal{W}(\Sigma_t)-\xi(\vert\Sigma_t\vert)\right]}\,ds,\label{mono-xi1}
\end{align}
from which we deduce that $\mathcal{G}(t)$ is strictly decreasing for $t\leq t_1$. 

If $n<7$, when $\Omega_{t_1}$ jumps to $F$, the strictly minimizing hull of $\Omega_{t_1}\cup W$, then the same argument as in \cite[\S 4.2]{Wei18} yields
\begin{equation}\label{eq-quan}
\partial F\in C^{1,\alpha},\quad |\partial F|\geq |\Sigma_{t_1}|,\quad \int_{\partial F}{H}\,d\mu\leq \int_{\Sigma_{t_1}}{H}\,d\mu_{t_1}.
\end{equation}
Meanwhile, since $t_1$ is the jump time, we have $\Omega_{t_1}\subset F\setminus W$ and hence
\begin{equation}\label{eq-Ric}
    \int_{\Omega_{t_1}}{\overline{Ric}(\partial_r,\partial_r)}\,dv\geq \int_{F\setminus W}{\overline{Ric}(\partial_r,\partial_r)}\,dv.
\end{equation}
Moreover, $F$ is the suitable condition to restart the flow. Then by the same argument in \S\ref{sub-5.1}, $\mathcal{G}(t)$ starting from initial value $F$ is nonincreasing and satisfies $\lim_{t\to\infty}\mathcal{G}(t)\geq 0$, and in turn
\begin{equation}\label{eq-afterjump}
      \mathcal{W}(\partial F)-\xi(|\partial F|)\geq 0.
\end{equation}
Combining \eqref{eq-quan}-\eqref{eq-afterjump}, we see that the quantity $\mathcal{G}(t)$ is nonincreasing during the jump and hence $\Sigma$ satisfies the strict inequality \eqref{eq-afterjump}. We summarize the results of this subsection in the following theorem.
\begin{thm}\label{In-Min-null}
    Let $\Omega$ be a bounded domain with smooth and outward minimizing boundary $\Sigma$ in the Schwarzschild manifold $(M^{n+1},\bar{g})$. If $n<7$ and the volume of $\Omega$ is sufficiently large, then we have
    \begin{equation*}
        \int_{\Sigma}{H}\,d\mu+\int_{\Omega}{\overline{Ric}(\partial_r,\partial_r)}\,dv>\xi(|\Sigma|),
    \end{equation*}
    where $\xi$ is the associated monotonically increasing function.
\end{thm}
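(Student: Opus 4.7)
The plan is to execute the weak inverse mean curvature flow strategy already outlined in the preceding discussion, assembling the strict-monotonicity statement before the jump, the jump discontinuity estimates, and the long-time limit from Subsection~\ref{sub-5.1} into a single inequality. Since $\Sigma$ is null homologous, I would first fill in the region $W$ bounded by the horizon as in \cite[\S 6]{HI-2001} and then run the weak IMCF from the initial data $\Omega$ inside the enlarged manifold $\tilde{M}$, using Theorem~\ref{Prop-Weak}. Let $t_1 \in (0,\infty]$ denote the first time at which $\Omega_t$ fails to be null homologous (i.e.\ the jump time); on the time interval $[0,t_1)$ the flow hypersurfaces $\Sigma_t$ all bound null homologous domains and I would aim to show that the quantity $\mathcal{G}(t)$ defined in~\eqref{defn-G} is strictly decreasing.

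For the monotonicity on $[0,t_1)$, the computation of Lemma~\ref{Lem-mohh} goes through verbatim up to the step that compares $\int_{\Omega_s} \tfrac{\varphi''}{\varphi}\,dv$ with $\xi_1(|\Sigma_s|)$. Here the usual weighted isoperimetric inequality~\eqref{isop-2} fails because $\Omega_s$ does not contain the horizon, but the hypothesis $\mathrm{Vol}(\Omega) \geq V_1$, combined with the property $|\Sigma_t|=e^t|\Sigma|$, lets me invoke Theorem~\ref{Thm-isolarge} (equivalently~\eqref{ineq-iso2}) to obtain $\int_{\Omega_s}\tfrac{\varphi''}{\varphi}\,dv \leq \xi_1(|\Sigma_s|-|\partial M|) < \xi_1(|\Sigma_s|)$ by monotonicity of $\xi_1$. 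This yields the strict inequality~\eqref{mono-xi1}, and Grönwall then gives $\mathcal{G}(t) < \mathcal{G}(0)$ for every $t\in (0,t_1]$.

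Next I would treat the jump at time $t_1$, where $\Omega_{t_1}$ is replaced by the strictly minimizing hull $F$ of $\Omega_{t_1}\cup W$. Invoking the standard properties~\eqref{eq-quan} of the jump from \cite[\S 4.2]{Wei18} gives $|\partial F|\geq |\Sigma_{t_1}|$ and $\int_{\partial F} H\,d\mu \leq \int_{\Sigma_{t_1}} H\,d\mu_{t_1}$; together with $\Omega_{t_1}\subset F\setminus W$ and $\overline{Ric}(\partial_r,\partial_r)\leq 0$, this yields~\eqref{eq-Ric} and hence $\mathcal{W}(\partial F) - \xi(|\partial F|) \leq \mathcal{W}(\Sigma_{t_1}) - \xi(|\Sigma_{t_1}|)$ by the monotonicity of $\xi$. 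After the jump, $\partial F$ is homologous to the horizon and satisfies the hypotheses of Subsection~\ref{sub-5.1}, so Proposition~\ref{prop-g} and Proposition~\ref{prop-lim} apply to the restarted weak IMCF and give $\mathcal{W}(\partial F) - \xi(|\partial F|) \geq 0$.

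Chaining these three steps produces
\begin{equation*}
0 \leq \mathcal{W}(\partial F) - \xi(|\partial F|) \leq \mathcal{W}(\Sigma_{t_1}) - \xi(|\Sigma_{t_1}|) < \mathcal{W}(\Sigma) - \xi(|\Sigma|),
\end{equation*}
which is the desired strict inequality. The main obstacle in this program is the first step: ensuring that the large-volume assumption is genuinely used in the correct place, namely to secure the isoperimetric comparison~\eqref{ineq-iso2} for the entire trajectory $\{\Omega_t\}_{t\in[0,t_1]}$. Since $|\Omega_t|$ is non-decreasing along the weak IMCF (as $|\Sigma_t|=e^t|\Sigma|$ and outward-minimizing initial data is preserved until the jump), the hypothesis $\mathrm{Vol}(\Omega)\geq V_1$ propagates to all $t\leq t_1$, so Theorem~\ref{Thm-isolarge} is applicable throughout and the strict inequality survives the passage to the jump and beyond. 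A minor technical point is to verify that the weak convergence of mean curvature arguments used in Lemma~\ref{Lem-mohh} extend across $t\to t_1^-$, but this follows from the same $C^{1,\beta}$ and Riesz representation facts cited there.
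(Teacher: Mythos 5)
Your proposal is correct and follows essentially the same route as the paper: strict decrease of $\mathcal{G}$ on $[0,t_1]$ obtained from Theorem \ref{Thm-isolarge} and \eqref{ineq-iso2} (giving \eqref{mono-xi1}), the jump comparison via \eqref{eq-quan} and \eqref{eq-Ric}, and then the results of \S\ref{sub-5.1} applied to the flow restarted from $F$ to get $\mathcal{W}(\partial F)-\xi(|\partial F|)\geq 0$. One cosmetic point: since strict monotonicity is proved for the normalized quantity $\mathcal{G}$ rather than for $\mathcal{W}-\xi(|\cdot|)$, the unnormalized middle inequality in your final chain does not follow directly from $\mathcal{G}(t_1)<\mathcal{G}(0)$; the clean conclusion is $\mathcal{G}(0)>\mathcal{G}(t_1)\geq 0$ (using $\mathcal{W}(\Sigma_{t_1})-\xi(|\Sigma_{t_1}|)\geq \mathcal{W}(\partial F)-\xi(|\partial F|)\geq 0$ and $|\Sigma_{t_1}|\geq|\Sigma|$), which gives the strict inequality for $\Sigma$ exactly as in the paper.
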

Combining the results in \S\ref{sub-5.1} and Theorem \ref{In-Min-null} completes the proof of Theorem \ref{Min-In2}.
\subsection{Hyperbolic case} In this subsection, we complete the proof of Theorem \ref{Thm-Ads}. Note that we also have Lemma \ref{Lem-5.1}, Lemma \ref{Lem-mohh} and Proposition \ref{prop-g} in this case. Due to the lack of blow down lemma, we cannot apply the method in Proposition \ref{prop-lim} to obtain the limit of $\mathcal{G}(t)$ as $t$ tends to $\infty$. However, \cite[Theorem 1.1]{H-2024} says that the weak solution $\Sigma_t$ to the IMCF becomes star-shaped and smooth after a large time and hence we can directly apply the result of Proposition \ref{limprop} to deduce the limit. Then we see that inequality \eqref{Mintype4} holds for $\Sigma$ which is outward minimizing in $\mathbb{H}^{n+1}$ with $2\leq n<7$, and the equality is characterized by the isoperimetric inequality, which means that $\Sigma$ is a geodesic sphere. This completes the proof of Theorem \ref{Thm-Ads}.


\end{sloppypar}
\end{document}